\newtheorem{theorem}[equation]{Theorem}
\newtheorem{lemma}[equation]{Lemma}
\newtheorem{proposition}[equation]{Proposition}
\newtheorem{corollary}[equation]{Corollary}
\theoremstyle{definition}
\newtheorem{definition}[equation]{Definition}
\theoremstyle{remark}
\newtheorem{remark}[equation]{Remark}
\numberwithin{equation}{section}
\newcommand{\ds}{\displaystyle}
\def\XXint#1#2#3{{\setbox0=\hbox{$#1{#2#3}{\int}$ }
\vcenter{\hbox{$#2#3$ }}\kern-.6\wd0}}
\DeclareMathOperator{\MSpec}{MSpec}
\DeclareMathOperator{\mspec}{MSpec}
\newcommand{\Fq}{\mathbb{F}_{q}}
\newcommand{\bv}{\mathbf{v}}
\newcommand{\bz}{\mathbf{z}}
\newcommand{\cM}{\mathcal{M}}
\newcommand{\cO}{\mathcal{O}}
\newcommand{\cC}{\mathcal{C}}
\newcommand{\cR}{\mathcal{R}}
\newcommand{\cF}{\mathcal{F}}
\DeclareMathAlphabet{\matheur}{U}{eur}{m}{n}
\newcommand{\fm}{\mathfrak{m}}
 \DeclareMathOperator{\Lie}{Lie}
 \DeclareMathOperator{\GL}{GL}
\DeclareMathOperator{\Mat}{Mat} 
 \DeclareMathOperator{\Gal}{Gal}
\DeclareMathOperator{\Id}{Id}
\DeclareMathOperator{\Vol}{Vol}
\DeclareMathOperator{\Fitt}{Fitt}
\newcommand{\OX}{\mathcal{O}}
\newcommand{\power}[2]{{#1 [\![ #2 ]\!]}}
\newcommand{\laurent}[2]{{#1 (\!( #2 )\!)}}
\definecolor{ForestGreen}{rgb}{0.0, 0.5, 0.0}
\newcommand{\C}{\ensuremath \mathbb{C}}
\newcommand{\Z}{\ensuremath \mathbb{Z}}
\newcommand{\F}{\ensuremath \mathbb{F}}
\newcommand{\bU}{\mathbf{U}}
\newcommand{\cS}{\mathcal{S}}
\newcommand{\cU}{\mathcal{U}}
\newcommand{\isom}{\ensuremath \cong}
\newcommand{\inv}{\ensuremath ^{-1}}
\DeclareMathOperator{\Exp}{Exp}
\DeclareMathOperator{\Log}{Log}
\newcommand{\twist}{^{(1)}}
\newcommand{\twistk}[1]{^{(#1)}}
\def\XXint#1#2#3{{\setbox0=\hbox{$#1{#2#3}{\int}$ }
\vcenter{\hbox{$#2#3$ }}\kern-.6\wd0}}
\title[An Equivariant Tamagawa Number Formula for abelian $t$-Modules]{An Equivariant Tamagawa Number Formula \linebreak for Abelian $t$-Modules and Applications}
\author{Nathan Green}
\address{Dept. of Mathematics and Statistics, Louisiana Tech University, Ruston, LA, 71270, USA}
\email{ngreen@latech.edu}%\email{n2green@ucsd.edu}
\author{Cristian D. Popescu}
\address{Dept. of Mathematics, University of California at San Diego, San Diego, CA, 92093, USA}
\email{cpopescu@ucsd.edu}
\keywords{Drinfeld Modules, Motivic $L$--functions, Equivariant Tamagawa Number Formula, Brumer--Stark Conjecture}
\subjclass[2010]{11G09, 11M38, 11F80}
\date{}
\begin{document}

\begin{abstract} We fix motivic data $(K/F, E)$ consisting of a Galois extension $K/F$ of characteristic $p$ global fields with arbitrary abelian Galois group $G$ and an abelian $t$--module $E$, defined over a certain Dedekind subring of $F$. For this data, one can define a
$G$--equivariant motivic $L$--function $\Theta_{K/F}^E$. We refine the techniques developed in \cite{FGHP20} and prove an equivariant Tamagawa number formula for appropriate Euler product completions of the special value $\Theta_{K/F}^E(0)$ of this equivariant $L$--function. 
This extends our results in \cite{FGHP20} from the Drinfeld module setting to the $t$--module setting. 
As a first notable consequence, we prove a $t$--module analogue of the classical (number field) Refined Brumer--Stark Conjecture, relating a certain $G$--Fitting ideal
of the $t$--module analogue of Taelman's class modules \cite{T12} to the special value $\Theta_{K/F}^E(0)$ in question. As a second consequence, we prove formulas for the values $\Theta_{K/F}^E(m)$, at all positive 
integers $m\in\Bbb Z_{\geq 0}$, when $E$ is a Drinfeld module. This, in turn, implies a Drinfeld module analogue of the classical Refined Coates--Sinnott Conjecture relating $\Theta_{K/F}^E(m)$ to the Fitting ideals of certain Carlitz twists of Taelman's class modules, suggesting a strong analogy between these twists and the even Quillen $K$--groups of a number field. 
In an upcoming paper, these consequences will be used to develop an Iwasawa theory for the $t$--module analogues of Taelman's class modules.
\end{abstract}

\keywords{Drinfeld Modules, $t$--modules, Motivic $L$--functions, Equivariant Tamagawa Number Formula, Brumer--Stark Conjecture}
\date{\today}
\maketitle

\section{Introduction}
\subsection{The arithmetic data}
Let $q=p^r$ for some fixed prime $p\in \Z$ and let $A = \F_q[t]$ and $k = \F_q(t)$.  Let $F/k$ be a finite, separable extension and $K/F$ be a finite, Galois extension with abelian Galois group $G = \Gal(K/F)$. If $L$ is a field of characteristic $p$, we let $\overline L$ be its separable closure and $L^{\rm alg}$ its algebraic closure. For simplicity, we assume that $K\cap  \overline{\F}_q = F\cap  \overline{\F}_q = \F_q$.  We let $\cO_F$ and $\cO_K$ be the integral closures of $A$ in $F$ and $K$, respectively.

For a prime $v\in{\rm MSpec}(\mathcal O_F)$, we denote by $I_v$ its inertia group in $G$. We let $\tilde \sigma_v$ denote a Frobenius automorphism associated to $v$ in the Galois group over $F$ of the maximal subfield of $\overline F$ which is unramified at $v$.
If $v$ is unramified in $K/F$, we denote by $\sigma_v$ its Frobenius automorphism in $G$. Obviously, in that case the restriction of $\widetilde\sigma_v$ to $K$ is $\sigma_v$.

We let $k_\infty = \laurent{\F_q}{1/t}$ be the completion of $\F_q(t)$ at the infinite place (corresponding, as usual, to the valuation on $k$ of uniformizer $1/t$), and set $\C_\infty$ to be a completion of $k_\infty^{\rm alg}$ at the infinite place.  Also, we will let $K_\infty:=K\otimes_k k_\infty$ and $F_\infty:=F\otimes_k k_\infty$. These are the direct sums of the completions of $K$ and $F$ at all the
primes in these respective fields sitting above the infinite prime of $k$. As usual, we view them as topological algebras endowed with the direct sum topologies.

\subsection{The relevant $t$--modules}
For an $\F_q$-algebra $R$, we let $\tau$ denote the $q$-power Frobenius of $R$ and let $R\{\tau\}$ denote the twisted polynomial ring in $\tau$, subject to the relation
\[\tau\cdot x=x^q\cdot\tau, \text{  for all }x\in R.\]
If $R$ is commutative, then ${\text M}_n(R)$ denotes the ring of $n\times n$ matrices with entries in $R$.

\begin{definition}
A $t$-module $E$ of dimension $n$ defined over $\cO_F$ is given by an $\F_q$-algebra morphism (called the structural morphism of $E$ in what follows)
\[\phi_E:A \to {\text M}_n(\cO_F)\{\tau\},\]
such that
\[\phi_E(t) = d_E[t]\tau^0 + M_1 \tau + \dots + M_\ell \tau^\ell,\quad M_i \in {\text M}_n(\cO_F),\]
where $d_E[t] = t\cdot \Id_n + N$, for $N \in {\text M}_n(\cO_F)$ a nilpotent matrix. \end{definition}

\begin{remark} We note that $t$-modules of dimension 1 are simply Drinfeld modules. \end{remark}

Any $t$-module $E$ as above gives rise to a functor (also denoted by $E$, abusively)
\[E: \left ({\text M}_n(\cO_F)\{\tau\}-\text{modules}\right ) \to \left (A-\text{modules}\right ),\quad B\mapsto E(B).\]
In this way $E$ gives an $A$-module structure to any ${\text M}_n(\cO_F)\{\tau\}$-module.\\

It is easily seen that the map $d_E[\cdot]:A\to {\text M}_n(\cO_F)$, sending $a\in A$ to the $\tau$--constant term of $\phi_E(a)$, is a ring morphism.  This ring morphism gives rise to a new functor
\[\Lie_E: \left ({\text M}_n(\cO_F)\{\tau\}-\text{modules}\right ) \to \left (A-\text{modules}\right ),\quad B\mapsto \Lie_E(B).\]
Every $a\in A$ acts on $B$ via $d_E[a]=a\cdot {\rm Id}_n+N_a$, where $N_a\in M_n(\mathcal O_F)$ is a nilpotent matrix.  We will refer to this action as the $\Lie$ action of $E$ on $B$ and will denote the ensuing $A$--module by $\Lie_E(B)$.

\begin{remark} When restricted to the subcategory of ${\text M}_n(\cO_F)\{\tau\}[G]-\text{modules}$ (i.e. ${\text M}_n(\cO_F)\{\tau\}$--modules on which
$G$ acts ${\text M}_n(\cO_F)\{\tau\}$--linearly), the above functors $E$ and $\Lie_E$ take values in the category of $A[G]$--modules. Relevant examples of modules $B$ belonging to this subcategory are given by
$K^n$, $O_K^n$, $(\mathcal O_K/v)^n$, $(K_\infty)^n$,
where $v$ is a finite prime in $\mathcal O_F$.
\end{remark}

For simplicity, if $M$ is an $\mathcal O_F\{\tau\}$--module (which makes $M^n$ an ${\text M}_n(\mathcal O_F)\{\tau\}$--module in the obvious fashion), in what follows we let $E(M)$ and $\Lie_E(M)$ denote
$E(M^n)$ and $\Lie_E(M^n)$, respectively. The following (otherwise elementary) result will be very useful in what follows.

\begin{lemma}\label{A-free-Lemma} Let $M$ be a $\mathcal O_F\{\tau\}$--module, which is finitely generated as an $\mathcal O_F$--module.
\begin{enumerate}
\item
If $M$ is projective of (necessarily locally constant) rank $m$ as an $\mathcal O_F$--module, then $\Lie_E(M)$ is a free $A$--module of rank $mn\cdot[F:k]$.
\item If $M$ is an $\mathcal O_F\{\tau\}[G]$--module, which is a projective $\mathcal O_F[G]$--module, then $\Lie_E(M)$ is a projective $A[G]$--module. Moreover, if $M$ is projective of constant rank $1$
over $\cO_F[G]$, then $\Lie_E(M)$ is projective of constant rank $n\cdot[F:k]$ over $A[G]$.
\end{enumerate}
\end{lemma}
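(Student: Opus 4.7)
The plan is to exploit the fact that $d_E[t]$ differs from the natural action of $t$ (via $A\hookrightarrow\cO_F$) by the nilpotent matrix $N$, and to use a filtration of $M^n$ on whose successive quotients the two $A$-actions coincide. First observe that since $\cO_F$ is a finitely generated torsion-free $A$-module over the PID $A = \F_q[t]$, it is $A$-free of rank $[F:k]$; consequently $\cO_F[G]\cong A[G]^{[F:k]}$, and any $\cO_F$-projective (respectively $\cO_F[G]$-projective) module is automatically $A$-projective (respectively $A[G]$-projective) of the expected rank. This reduces both parts to controlling an $\cO_F$- or $\cO_F[G]$-module structure on $\Lie_E(M)$.

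Consider the $N$-adic kernel filtration
\[
0 = V_0 \subseteq V_1 \subseteq \cdots \subseteq V_s = M^n, \qquad V_j := \ker\bigl(N^j\colon M^n \to M^n\bigr),
\]
where $s$ is the nilpotency index of $N$. Writing $M^n = M\otimes_{\cO_F}\cO_F^n$ and using $\cO_F$-flatness of $M$, one has $V_j = M\otimes_{\cO_F}\ker(N_0^j)$ where $N_0\colon \cO_F^n\to\cO_F^n$ denotes multiplication by $N$; in particular each $V_j$ is an $\cO_F$-submodule of $M^n$, and in the setting of part (2) an $\cO_F[G]$-submodule, since $G$ acts $\cO_F$-linearly and hence commutes with $N$. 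Because $d_E[t] = t\cdot\Id_n + N$ and both summands preserve every $V_j$, the filtration is also $A$-stable under the $d_E$-action. Crucially, since $N$ maps $V_{j+1}$ into $V_j$, the induced action of $d_E[t]$ on each successive quotient $V_{j+1}/V_j$ coincides with the natural action of $t$.

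For part (1), each quotient $V_{j+1}/V_j = M\otimes_{\cO_F}(\ker N_0^{j+1}/\ker N_0^j)$ is $\cO_F$-projective: the second factor embeds into $\ker N_0 \subseteq \cO_F^n$ via $N_0^j$, hence is $\cO_F$-torsion-free and therefore projective over the Dedekind ring $\cO_F$; and a tensor product of $\cO_F$-projectives is $\cO_F$-projective. By the opening observation, each $V_{j+1}/V_j$ is then $A$-free of rank $\rank_{\cO_F}(V_{j+1}/V_j)\cdot[F:k]$. Since every extension of $A$-free modules by $A$-free modules is again $A$-free, an induction along the filtration shows $\Lie_E(M) = V_s$ is $A$-free of rank $\sum_j\rank_{\cO_F}(V_{j+1}/V_j)\cdot[F:k] = mn\cdot[F:k]$.

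For part (2), the same argument goes through one category higher: $V_{j+1}/V_j$ is the tensor product of the $\cO_F[G]$-projective $M$ with the $\cO_F$-projective $\ker N_0^{j+1}/\ker N_0^j$, so it is $\cO_F[G]$-projective and thus $A[G]$-projective. Every short exact sequence $0 \to V_j \to V_{j+1} \to V_{j+1}/V_j \to 0$ of $A[G]$-modules then splits, and inductively $\Lie_E(M) = V_s$ is $A[G]$-projective. When $\rank_{\cO_F[G]}(M)=1$, the quotient $V_{j+1}/V_j$ has constant rank $r_j := \rank_{\cO_F}(\ker N_0^{j+1}/\ker N_0^j)$ over $\cO_F[G]$, hence constant rank $r_j[F:k]$ over $A[G]$, and summing yields total rank $n\cdot[F:k]$. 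The main subtlety is ensuring projectivity at the correct structural level (Dedekind versus group ring), which is precisely why the identification $V_j = M\otimes_{\cO_F}\ker N_0^j$ is essential: without isolating the $\cO_F^n$ tensor factor, the $A[G]$-projectivity of the successive quotients would not be a priori clear.
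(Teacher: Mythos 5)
Your proof is correct, but it takes a genuinely different route from the paper's. The paper exploits the characteristic-$p$ identity $d_E[t]^{q^\ell}=(t\cdot\Id_n+N)^{q^\ell}=t^{q^\ell}\cdot\Id_n$ for $\ell$ large enough that $N^{q^\ell}=0$: this makes $\Lie_E(M)$ and $M^n$ (with the straight scalar action) literally coincide as modules over the subring $A^{(\ell)}=\F_q[t^{q^\ell}]$, and part (1) follows by passing between $A$ and $A^{(\ell)}$; for part (2) the paper then invokes Corollary 7.1.7 of \cite{FGHP20} (projective over $R[G]$ iff $R$-projective and $G$-cohomologically trivial), noting that $\Lie_E(M)$ and $M^n$ have identical underlying $\Z[G]$-structures. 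You instead filter $M^n$ by the kernels of the powers of $N$, observe that the $d_E[t]$-action and the straight $t$-action agree on the graded pieces, establish projectivity of those pieces over $\cO_F$ (resp.\ $\cO_F[G]$) via flatness of $M$ and the embedding $\ker N_0^{j+1}/\ker N_0^j\hookrightarrow \cO_F^n$, and conclude by splitting extensions of projectives; all the steps check out, including the rank bookkeeping. What each approach buys: yours is characteristic-free (it never uses that $q^\ell$ is a power of $p$), avoids the cohomological-triviality criterion entirely, and is self-contained, at the cost of length; the paper's trick is shorter and transports verbatim to the $k_\infty$-setting of Lemma \ref{kinfty[G]-free-Lemma} by replacing $A^{(\ell)}$ with $\F_q((t^{-q^\ell}))$ — though your filtration argument would adapt there as well, since the graded pieces remain projective after base change to $k_\infty$.
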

\begin{proof}
(1) Observe that there is an integer $\ell\gg 0$, such that $N_t^{q^\ell}=0$. It is clear that for such an $\ell$, we have $N_a^{q^\ell}=0$, for all $a\in A$. Consequently, if we let $A^{(\ell)}:=\Bbb F_q[t^{q^\ell}]$, then the identity map gives an isomorphism
of $A^{(\ell)}$--modules
$$M^n\isom\Lie_E(M),$$
where the left--side is endowed with the straight scalar multiplication action and the right with the ${\rm Lie}_E$--action. Now, (1) above follows from the fact that $A$ is a PID, $\mathcal O_F$ is a free $A$--module of rank $[F:k]$, and $A$ is a free $A^{(\ell)}$--module (of rank
$q^\ell$.)\\

(2) By Corollary 7.1.7 of \cite{FGHP20}, since $M$ is a projective $\mathcal O_F[G]$--module, $M$ is $\mathcal O_F$--projective and $G$-c.t. ($G$--cohomologically trivial). Therefore, $\Lie_E(M)$ is $A$--free (by (i)) and $G$--c.t. (Note that the $\Bbb Z[G]$--module structures of $M^n$ and $ \Lie_E(M)$ are identical. Therefore, if $M$ is $G$--c.t., then $M^n$ is $G$--c.t. and $\Lie_E(M)$ is $G$--c.t.)
Therefore (by loc.cit.) $\Lie_E(M)$ is $A[G]$--projective. The $A[G]$--rank equality follows easily by looking at $A^{(\ell)}[G]$ instead (see proof of (1) above.)
\end{proof}

\begin{remark} [Extending the Lie action to $k_\infty$]\label{kinfty-free-Remark}
Observe that the matrix $d_E[a]$ is invertible in $M_n(F)$, for all $a\in A\setminus\{ 0\}$. Also, for all valuations $v_\infty$ of $F$ above $\infty$, we  have
\[\min_{i,j}\left (v_\infty\left ((d_E[t]^{-m})_{i,j}\right )\right ) \to \infty, \quad \text{as }m\to \infty.\]
This implies the map $d_E[\cdot]$ extends naturally to a ring homomorphism
$$d_E:k_\infty \to {\text M}_n(F_\infty).$$
In particular, via this map $\Lie_E(F_\infty)$ is
naturally endowed with a $k_\infty$--module structure (extending its $A$--module structure) and $\Lie_E(K_\infty)$ is endowed with a $k_\infty[G]$--module structure
(extending its $A[G]$--module structure.) An analogue of Lemma \ref{A-free-Lemma}(1) shows that
$${\rm dim}_{k_\infty}\Lie_E(K_\infty)={\rm dim}_{k_\infty}K_\infty^n=n\cdot [K:k].$$
Also, by Hilbert's normal basis theorem, $K\isom F[G]$, as $F[G]$--modules. Therefore, from the definitions, we have an isomorphism of $F_\infty[G]$--modules and $k_\infty[G]$--modules, respectively
$$K_\infty\isom F_\infty[G], \qquad K_\infty\isom k_\infty[G]^r,$$
where $r=[F:k]$. Therefore, $K_\infty$ is a rank $1$, free $F_\infty[G]$--module. 
Consequently, by an analogue of Lemma \ref{A-free-Lemma}(2), in the proof of which one replaces $A^{(\ell)}$ with $k_\infty^{(\ell)}:=\Bbb F_q((t^{-q^\ell}))$, we have the following.
\end{remark}
\begin{lemma}\label{kinfty[G]-free-Lemma} The $k_\infty[G]$--module $\Lie_E(K_\infty)$ is free of rank $rn$, where $r=[F:k]$.
\end{lemma}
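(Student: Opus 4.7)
The plan is to mimic the proof of Lemma \ref{A-free-Lemma}(2), following the blueprint laid out in Remark \ref{kinfty-free-Remark}: replace $A=\F_q[t]$ by $k_\infty$ and $A^{(\ell)}=\F_q[t^{q^\ell}]$ by $k_\infty^{(\ell)}:=\F_q((t^{-q^\ell}))$.

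First, I fix $\ell \gg 0$ such that $N^{q^\ell}=0$, where $d_E[t] = t\cdot\Id_n + N$. In characteristic $p$ the binomial expansion collapses, so $d_E[t^{q^\ell}] = (t\cdot\Id_n + N)^{q^\ell} = t^{q^\ell}\cdot\Id_n$. By the continuity of the extended ring homomorphism $d_E: k_\infty \to M_n(F_\infty)$ recorded in Remark \ref{kinfty-free-Remark}, this identity forces $d_E(a) = a\cdot\Id_n$ for every $a \in k_\infty^{(\ell)}$. Therefore the identity map on $K_\infty^n$ yields an isomorphism of $k_\infty^{(\ell)}[G]$-modules
\[K_\infty^n \iso \Lie_E(K_\infty),\]
the left side carrying straight scalar multiplication by $k_\infty^{(\ell)}$ and the right carrying the restricted Lie action.

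Next, I combine the $k_\infty[G]$-isomorphism $K_\infty\cong k_\infty[G]^r$ (from Hilbert's normal basis theorem, as in Remark \ref{kinfty-free-Remark}) with the fact that $k_\infty$ is $k_\infty^{(\ell)}$-free of rank $q^\ell$ (via the basis $\{t^{-i}\}_{0\le i<q^\ell}$). Together these show that $K_\infty^n$ with its straight $k_\infty^{(\ell)}$-action is $k_\infty^{(\ell)}[G]$-free of rank $rnq^\ell$, so via Step 1, $\Lie_E(K_\infty)$ is also $k_\infty^{(\ell)}[G]$-free. In particular, it is $G$-cohomologically trivial; since its underlying $\Z[G]$-structure does not depend on which ring of scalars one considers, $\Lie_E(K_\infty)$ remains $G$-c.t.~when viewed with its full $k_\infty[G]$-action coming from the Lie map.

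To conclude, Remark \ref{kinfty-free-Remark} already supplies $k_\infty$-freeness of $\Lie_E(K_\infty)$ of dimension $n[K:k] = rn|G|$. By Corollary 7.1.7 of \cite{FGHP20}, $k_\infty$-projectivity together with $G$-c.t.~implies $k_\infty[G]$-projectivity; applying the same corollary over the field $k_\infty$ upgrades this to $k_\infty[G]$-freeness, and a $k_\infty$-dimension count fixes the rank at $rn$. I expect the delicate point, and main obstacle, to be precisely this final step: the Lie action of $k_\infty$ differs from any convenient straight action on $K_\infty^n$, so the explicit isomorphism of Step 1 only controls the $k_\infty^{(\ell)}[G]$-structure and does not directly transport $k_\infty^{(\ell)}[G]$-freeness to $k_\infty[G]$-freeness; the bridge between the two must be built through cohomological triviality rather than through a direct base-change argument.
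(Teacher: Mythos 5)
Your proof is correct and follows exactly the route the paper prescribes: its "proof" of this lemma is just a pointer to Remark \ref{kinfty-free-Remark}, which instructs one to run the argument of Lemma \ref{A-free-Lemma}(2) with $A^{(\ell)}$ replaced by $k_\infty^{(\ell)}=\F_q((t^{-q^\ell}))$, using $N^{q^\ell}=0$, the normal basis isomorphism $K_\infty\cong k_\infty[G]^r$, and Corollary 7.1.7 of \cite{FGHP20} to pass from $G$-cohomological triviality plus $k_\infty$-freeness to $k_\infty[G]$-freeness, with the rank fixed by a dimension count. The "delicate point" you flag at the end is precisely the step the paper also routes through cohomological triviality, so there is no discrepancy.
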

\begin{proof} See the remark above.
\end{proof}

\begin{definition} [The exponential map of $E$, see \cite{And86}]
Associated to the $t$-module $E$, there is a uniquely defined power series $\Exp_E \in {\rm Id}_n+\bz\cdot {\rm M}_n(\cO_F)[[\bz]]$, such that
\[\Exp_E(d_E[a] \bz) = \phi_E(a)(\Exp_E(\bz)),\quad \text{for all }a\in A.\]
This series converges on $\Bbb C_\infty^n$, defines an analytic function $\Exp_E:\C_\infty^n\to \C_\infty^n$ which is surjective if $E$ is uniformizable (see \cite[\S 2.2]{And86}). The series $\Exp_E$ is called the exponential of $E$ and its formal inverse $\Log_E$ is called the logarithm of $E$. The series $\Log_E$ converges only on some finite polydisc inside $\C_\infty^n$.
\end{definition}

\begin{remark}Note that the definition and uniqueness of $\Exp_E$ turn it into an analytic, open $A[G]$--linear morphism
$$\Exp_E: \Lie_E(K_\infty)\to E(K_\infty)$$
\end{remark}

\subsection{The associated special $L$--values}\label{L-values-section} From now on, we assume that $E$ is an {\it abelian $t$--module.} (See Definition 5.4.12 in \cite{Goss}.) The main reason for this assumption is to ensure that the $v_0$--adic Tate modules $T_{v_0}(E)$ are $A_{v_0}$--free of finite rank (equal to the rank of $E$), for all primes $v_0\in{\rm MSpec}(A)$.
In particular, if $E$ is a Drinfeld module or a tensor product of Drinfeld modules, then $E$ is abelian. (See \S5 in \cite{Goss} for all these facts.)\\
 
Now, we follow the ideas of \cite{FGHP20},  to associate to the data $(K/F, E)$ an Euler--incomplete, $G$--equivariant $L$--function
$$\Theta_{K/F, S_0}^E: \Bbb S_\infty^+\to \Bbb C_\infty[G],$$
where $S_0$ is the subset of ${\rm MSpec}(\mathcal O_F)$ consiting of all the finite primes of bad reduction for $E$ and those which are wildly ramified in $K/F$, $\Bbb S_\infty:=\Bbb C_\infty^\times\times\Bbb Z_p$ is Goss's ``complex plane'', and $\Bbb S_\infty^+:=\{(x,y)\in\Bbb S_\infty\mid |x|_\infty\geq 1\}$ is a``half--plane'' which contains $\Bbb Z_{\geq 0}$, under the Goss embedding $\Bbb Z\to \Bbb C_\infty$ given by $n\to (t^n, n)$. Here, $|\cdot|_\infty$ is the usual normalized abosolute value on $\Bbb C_\infty$.
In order to define the $L$--function above, as in \cite{FGHP20}, for $v_0\in {\rm MSpec}(A)$, we let 
$$H^1_{v_0}(E):={\rm Hom}_{A_{v_0}}(T_{v_0}(E), A_{v_0}), \quad H^1_{v_0}(E, G):=H^1_{v_0}(E)\otimes_{A_{v_0}}A_{v_0}[G]$$ 
and endow these free $A_{v_0}$--modules with the contravariant and the diagonal  $G_F$--action, respectively. Now, we let  $v\in{\rm MSpec}(\mathcal O_F)\setminus S_0$, with $v\nmid v_0$ and remind the reader that the $G_F$--representation $H^1_{v_0}(E)$ is unramified at $v$. (See \cite{Goss}, \S8.6.) This means that the $G_F$--representation $H_{v_0}(E)\otimes_{A_{v_0}}A_{v_0}[G]^{I_v}$ is unramified at $v$. Since this last $A_{v_0}[G]$--module is finitely generated and projective (see \cite{FGHP20} for a proof), one can consider the polynomial
$$P_v^{\ast, G, E}(X):={\rm det}_{A_{v_0}[G]}(1-X\cdot\widetilde\sigma_v\, \mid H_{v_0}^1(E)\otimes_{A_{v_0}}A_{v_0}[G]^{I_v}).$$
As in loc.cit. (see \S8.6 in \cite{Goss} as well) one shows that this polynomial does not depend on $v_0$ and has coefficients in $A[G]$, for every $v$ as above. 
For any finite set $S\subseteq {\rm MSpec}(\mathcal O_F)$, containing $S_0$, we let 
\begin{equation}\label{incomplete-L-definition}\Theta_{K/F, S}^E(s):=\prod_{v\not\in S}P_{v}^{\ast, G, E}(Nv^{-s})^{-1},\end{equation}
where $Nv$ is the monic generator of the ideal $N_{\mathcal O_F/A}(v)$ and $s\to Nv^{-s}$ is Goss's exponential function defined on $\Bbb S_\infty$. (See \cite{Goss}, \S8.1.)  It is important to note that under Goss's exponential and embedding $\Bbb Z\to \Bbb C_\infty$ , if $m\in\Bbb Z$ then $Nv^m$ has the usual meaning. (See loc.cit.) Since we are only interested in the special values at $s\in\Bbb Z_{\geq 0}$ of this $L$--function, where the infinite product above is convergent by the arguments below, we leave general convergence and well-definedness issues asside, and will treat them in a separatate, upcoming paper.

\begin{remark} Note that in \cite{FGHP20} (see \S1.2)  we worked with a slightly different definition of the polynomial $P_v^{\ast, G, E}(X)$ (denoted in loc.cit. $P_v^{\ast, G}(X)$.) We adopted the new definition given above, to be consistent with Goss's definition of (non--equivariant) $L$--functions for abelian $t$--motives given in \S8.6 of \cite{Goss}. However, note that the value $P_v^{\ast, G, E}(1)$, which will be of interest to us in what follows,  does not change under the new definition.
\end{remark}

Although the special value
$\Theta_{K/F, S}^E(0)$ is well defined, for technical reasons (as explained in loc.cit.) we focus on computing certain Euler--completed versions of it $\Theta_{K/F}^{E,\mathcal M}(0)$, which depend on additional arithmetic data $\mathcal M$. For well chosen data $\mathcal M$, one has an equality
$$\Theta_{K/F, S}^E(0)=\Theta_{K/F}^{E, \mathcal M}(0),$$
so nothing is lost in the process of Euler--completion. (See \S\ref{incomplete-L-section} for details.) A detailed study of these Euler--incomplete  $L$--functions and a proof of the formulas for their special values will be addressed in a separate paper. In this section, we only define the special values $\Theta_{K/F}^{E,\mathcal M}(0)$, whose study will be the main focus of this work.\\

An important ingredient in defining the special values above is the notion of monicity in the ring $\F_q[G]((t^{-1}))$, as defined in \cite[\S 7.3]{FGHP20}.  This generalizes the classical notion of monicity in $\Bbb F_q((t^{-1}))$. We state the definitions and results here, but refer the reader to loc.cit. for  details.  First, we decompose $G = P\times \Delta$, where $P$ is the $p$-Sylow subgroup of $G$. Second, we let $\widehat\Delta(\Bbb F_q)$ be the set of $G_{\Bbb F_q}$--conjugacy classes of irreducible, $\overline{\Bbb F_q}$--valued characters of $\Delta$.
\begin{definition} For an algebraic extension $\Bbb F/\Bbb F_q$, define the subgroup $\F((t^{-1}))[P]^+$  of monic elements in $\F((t^{-1}))[P]^\times$ by
$$\F((t^{-1}))[P]^+:=\bigcup_{n\in\Z}\, t^n\cdot(1+t^{-1}\F[P][[t^{-1}]]).$$
\end{definition}
\noindent Choosing representatives $\chi$ for the conjugacy classes $\widehat\chi\in\widehat\Delta(\Bbb F_q)$ gives a ring isomorphism
\[\psi_\Delta: \F_q[G]((t^{-1}))\simeq \bigoplus_{\widehat\chi}\F_q(\chi)[P]((t^{-1})).\]

\begin{definition}
Define the subgroup $\F_q((t^{-1}))[G]^+$  of monic elements in $\F_q((t^{-1}))[G]^\times$ by
$$\F_q((t^{-1}))[G]^+:=\psi_{\Delta}^{-1}\left(\bigoplus_{\widehat\chi}\F_q(\chi)((t^{-1}))[P]^+\right).$$
\end{definition}

The following results are proved in \cite[\S\S7.3.4-7.3.6]{FGHP20}.

\begin{proposition}\label{P:MonicProp} The following hold true.
\begin{enumerate}
\item There is an equality of groups
\[\F_q((t^{-1}))[G]^\times=\F_q((t^{-1}))[G]^+\times \F_q[t][G]^\times.\]
\item There is a canonical group isomorphism
\[\F_q((t^{-1}))[G]^\times/\F_q[t][G]^\times \simeq \F_q((t^{-1}))[G]^+, \qquad \widehat g \to g^+,\]
sending the class $\widehat x$ of $x\in\F_q((t^{-1}))[G]^\times$ to its unique monic representative $x^+.$
\item The monoid $\Bbb F_q[t][G]^+:=\Bbb F_q[t][G]\cap \F_q((t^{-1}))[G]^+$ consists of polynomials $f\in\Bbb F_q[G][t]$, such that
$\chi(f)$ is a (classically) monic polynomial in $\Bbb F_q(\chi)[P][t]$, for all $\chi\in\widehat\Delta$.
\end{enumerate}
\end{proposition}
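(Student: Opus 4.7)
The whole statement reduces to a local analysis via $\psi_\Delta$: since $\psi_\Delta$ is a ring isomorphism carrying $\F_q[t][G]$ to $\bigoplus_{\widehat\chi}\F_q(\chi)[P][t]$ and $\F_q((t^{-1}))[G]^+$ to $\bigoplus_{\widehat\chi}\F_q(\chi)((t^{-1}))[P]^+$ (by definition), all three assertions reduce component-wise to the corresponding statements for $\F((t^{-1}))[P]$, where $\F=\F_q(\chi)$ is a finite extension of $\F_q$ and $P$ is a finite abelian $p$-group. The key structural fact in this local setting is that $\F[P]$ is a commutative local Artinian ring whose maximal ideal $I_P$ (the augmentation ideal) is nilpotent, say $I_P^M=0$, because $\mathrm{char}(\F)=p$ and $P$ is a $p$-group.

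Using nilpotence, the ideal $I:=I_P\cdot\F[P]((t^{-1}))$ is itself nilpotent, so $1+I\subset\F[P]((t^{-1}))^\times$, and the augmentation map $\F[P]((t^{-1}))\to\F((t^{-1}))$ admits a canonical splitting coming from $\F\hookrightarrow\F[P]$. Combined with the classical $\F((t^{-1}))^\times=\F^\times\times\F((t^{-1}))^+$, this yields
\[
\F[P]((t^{-1}))^\times\simeq\F^\times\times\F((t^{-1}))^+\times(1+I).
\]
By coefficient-wise bookkeeping---splitting each coefficient $a_i\in\F[P]$ into its augmentation $\bar a_i\in\F$ and its nilpotent part $a_i-\bar a_i\in I_P$---one verifies that $\F((t^{-1}))[P]^+=\F((t^{-1}))^+\times(1+t^{-1}I_P\F[P][[t^{-1}]])$, while the standard description of units of a polynomial ring over a commutative ring gives $\F[P][t]^\times=\F^\times\times(1+I_P\F[P][t])$. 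Finally, splitting $1+I$ according to the sign of the $t$-power produces
\[
1+I=\bigl(1+t^{-1}I_P\F[P][[t^{-1}]]\bigr)\times\bigl(1+I_P\F[P][t]\bigr).
\]
Assembling these three decompositions gives $\F[P]((t^{-1}))^\times=\F((t^{-1}))[P]^+\times\F[P][t]^\times$, which is part (1). Part (2) is then immediate: projection onto the first factor defines the canonical isomorphism $\widehat x\mapsto x^+$. For part (3), a direct check shows $\F[P][t]\cap\F((t^{-1}))[P]^+$ consists precisely of the classically monic polynomials in $\F[P][t]$, and applying $\psi_\Delta^{-1}$ gives the stated characterization.

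The main technical obstacle is the third of the three decompositions above. A naive additive splitting $x=x_{<0}+x_{\geq 0}$ is not multiplicative, since the error $(1+x_{<0})(1+x_{\geq 0})-(1+x)=x_{<0}x_{\geq 0}\in I^2$ has mixed $t$-powers. I would resolve this by induction on the nilpotence index of $I_P$: at each step, a first-order correction to $x_{<0}$ and $x_{\geq 0}$ reduces the error by one further power of $I$, and the procedure terminates after $M$ steps. Uniqueness of the splitting follows from the triviality of the intersection $(1+t^{-1}I_P\F[P][[t^{-1}]])\cap(1+I_P\F[P][t])=\{1\}$, which is a routine coefficient comparison.
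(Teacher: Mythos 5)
Your argument is correct, and it is essentially the standard one: the paper itself does not reprove this proposition but cites \cite[\S\S 7.3.4--7.3.6]{FGHP20}, where the same strategy is used --- reduce via $\psi_\Delta$ to each component $\F_q(\chi)((t^{-1}))[P]$ and exploit the nilpotence of the augmentation ideal $I_P$ of $\F_q(\chi)[P]$. You correctly isolate the one genuinely non-obvious step, namely that the multiplicative splitting of $1+I$ into a ``negative-powers'' and a ``polynomial'' factor cannot be read off from the additive decomposition, and your successive-approximation fix (with termination guaranteed by $I^M=0$, and uniqueness from the trivial intersection of the two factors, both of which are groups since $\F[P]$ is commutative) closes that gap completely.
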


\begin{proposition}
Assume that $B$ is a finite $A[G]$--module which is $\F_q[G]$--projective. Then:
\begin{enumerate}
\item The Fitting ideal $\Fitt_{A[G]}^0(B)$ is principal and has a unique monic generator
$$f_B(t)\in\Bbb F_q[G][t]^+.$$
\item If $B$ is free of rank $m$ as an $\Bbb F_q[G]$--module, then $f_B(t)$ is a degree $m$ monic polynomial, in the classical sense.
\end{enumerate}
\end{proposition}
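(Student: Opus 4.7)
The plan is to reduce the computation to a component-wise calculation via the ring decomposition $\psi_\Delta$ from Section 7.3. Since $G = P \times \Delta$ with $P$ a $p$-group and $\gcd(|\Delta|, q) = 1$, one has
\[\F_q[G] \cong \bigoplus_{\widehat\chi \in \widehat\Delta(\F_q)} \F_q(\chi)[P], \qquad A[G] \cong \bigoplus_{\widehat\chi} \F_q(\chi)[P][t].\]
Any $A[G]$--module $B$ splits along this decomposition as $B = \bigoplus_{\widehat\chi} B_\chi$, and if $B$ is $\F_q[G]$--projective then each $B_\chi$ is $\F_q(\chi)[P]$--projective. Each $B_\chi$ carries the $\F_q(\chi)[P]$--linear endomorphism $T_\chi$ given by the action of $t$, and the Fitting ideal decomposes accordingly as $\Fitt^0_{A[G]}(B) = \bigoplus_{\widehat\chi} \Fitt^0_{\F_q(\chi)[P][t]}(B_\chi)$.

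For each $\chi$, the ring $\F_q(\chi)[P]$ is a commutative local ring, since its augmentation ideal is maximal and nilpotent (as $P$ is a $p$-group in characteristic $p$). Hence every finitely generated projective $\F_q(\chi)[P]$--module is free, and one may pick a basis identifying $B_\chi$ with $\F_q(\chi)[P]^{m_\chi}$ and $T_\chi$ with a matrix $M_\chi \in \mathrm{M}_{m_\chi}(\F_q(\chi)[P])$. The standard characteristic--polynomial resolution
\[0 \to \F_q(\chi)[P][t]^{m_\chi} \xrightarrow{\,t \cdot I - M_\chi\,} \F_q(\chi)[P][t]^{m_\chi} \to B_\chi \to 0\]
is then a square presentation whose left-hand map has classically monic determinant $\det(t \cdot I - M_\chi)$ of degree $m_\chi$ (in particular a non-zero-divisor, which gives exactness on the left). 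Reading the Fitting ideal off this presentation yields
\[\Fitt^0_{\F_q(\chi)[P][t]}(B_\chi) = \bigl(\det(t \cdot I - M_\chi)\bigr).\]

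Reassembling across $\widehat\chi$ proves (1): the ideal $\Fitt^0_{A[G]}(B)$ is principal, with a generator $f_B(t) \in \F_q[G][t]$ characterized by the property that its image in each $\F_q(\chi)[P][t]$ equals $\det(t \cdot I - M_\chi)$. Since every such image is classically monic, Proposition \ref{P:MonicProp}(3) places $f_B(t)$ in $\F_q[G][t]^+$, and its uniqueness as a monic generator then follows from Proposition \ref{P:MonicProp}(2). For (2), if $B$ is $\F_q[G]$--free of rank $m$, each $B_\chi$ is $\F_q(\chi)[P]$--free of rank $m$, so $m_\chi = m$ for all $\widehat\chi$ and $f_B(t)$ is classically monic of degree $m$. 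The only non-formal ingredient is the reduction to the local rings $\F_q(\chi)[P]$, which upgrades projectivity to freeness and makes the characteristic--polynomial presentation available; everything else is bookkeeping and a direct invocation of Proposition \ref{P:MonicProp}.
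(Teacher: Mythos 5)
Your proof is correct, and it follows the same route the paper relies on (the paper itself only cites \cite[\S\S 7.3.4--7.3.6]{FGHP20} for this statement): decompose along $\psi_\Delta$, use that each $\F_q(\chi)[P]$ is local with nilpotent maximal ideal to upgrade projectivity to freeness, and read the Fitting ideal off the square characteristic--polynomial presentation, so that $f_B(t)$ is componentwise $\det(t\cdot I - M_\chi)$. This is exactly consistent with Remark \ref{R:finite-det}, which identifies $|V|_G$ with $t^m\det_{R[[T^{-1}]]}(1-t\cdot T^{-1}\mid V)\rvert_{T=t}$, i.e.\ with the characteristic polynomial of the $t$--action.
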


\begin{definition} For $B$ as in the above Proposition, define
\[\lvert B\rvert_G := f_B(t) \in \F_q[G][t]^+.\]
and call it the $A[G]$--size (or $G$--size) of $B$.
\end{definition}

The following Proposition provides the examples of $A[G]$--modules $B$ as above which are directly involved in defining the special $L$--values $\Theta_{K/F}^{E,\mathcal M}(0)$ of interest to us.
\begin{proposition}
For the set of data $(K/F, E)$ as above, the following hold true.
\begin{enumerate}
\item If $v\in \MSpec(\cO_F)$ is a prime which is tamely ramified in $K/F$, then $\Lie_E(\cO_K/v)$ and $E(\cO_K/v)$ are free $\F_q[G]$-modules of rank $n\cdot[\cO_F/v:\F_q]$.
\item There exists an $\cO_F[G]\{\tau\}$--submodule $\cM$ of $\cO_K$, called a taming module for $\mathcal O_K/\mathcal O_F$, or simply a taming module, satisfying the following properties.
\begin{enumerate}
\item $\cM$ is a projective $\cO_F[G]$--module.
\item The quotient $\cO_K/\cM$ is finite and supported only at primes $v\in{\rm MSpec}(\cO_F)$ which are wildly ramified in $K/F$.
\end{enumerate}
\item For any taming module $\mathcal M$, the $A[G]$--modules $\Lie_E(\cM/v)$ and $E(\cM/v)$ are $\F_q[G]$--free of rank $n\cdot[\cO_F/v:\F_q]$, for all
$v\in{\rm MSpec}(\cO_F)$.
\end{enumerate}
\end{proposition}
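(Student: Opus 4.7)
For part (1), the plan is to invoke the classical Noether theorem for tame ramification: since $v$ is tamely ramified in $K/F$, the semilocal ring $\cO_K\otimes_{\cO_F}\cO_{F,v}$ is $\cO_{F,v}[G]$-free of rank one. Reducing modulo $v$ shows that $\cO_K/v\cO_K$ is free of rank one over $(\cO_F/v)[G]\cong\F_{q^d}[G]$, where $d:=[\cO_F/v:\F_q]$; restricting scalars along $\F_q\hookrightarrow\F_{q^d}$ then gives $\F_q[G]$-freeness of rank $d$. The functors $\Lie_E$ and $E$ preserve the underlying $\F_q[G]$-structure and merely replace it by its $n$-fold direct sum, because $\phi_E$ being an $\F_q$-algebra morphism forces $\phi_E(c)=c\cdot\Id_n\cdot\tau^0$ and $d_E[c]=c\cdot\Id_n$ for every $c\in\F_q$. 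Hence $\Lie_E(\cO_K/v)$ and $E(\cO_K/v)$ are $\F_q[G]$-free of rank $nd$.

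For part (2), I would carry out a local-to-global construction producing $\cM$ as a $G$-stable ideal of $\cO_K$. Let $T\subset\MSpec(\cO_F)$ be the finite set of primes that ramify wildly in $K/F$. At each $v\in T$, classical results on integral $\cO_{F,v}[G]$-representations (for instance, cohomological triviality of appropriately high powers of the relative different) furnish an exponent $n_v\geq 1$ such that $\prod_{\mathfrak{P}\mid v}\mathfrak{P}^{n_v}$ is an $\cO_{F,v}[G]$-projective $G$-stable ideal of $\cO_{K,v}$; for $v\notin T$ one sets $n_v=0$, as $\cO_{K,v}$ is already $\cO_{F,v}[G]$-free of rank one by Noether. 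Taking $\cM:=\prod_{v\in T,\ \mathfrak{P}\mid v}\mathfrak{P}^{n_v}$ then yields an $\cO_F[G]$-projective ideal of $\cO_K$ with finite cokernel supported only at $T$. The crucial benefit of taking $\cM$ to be a \emph{genuine ideal} is that $\tau$-stability comes for free: $\tau(x)=x^q=x\cdot x^{q-1}\in\cM$ for every $x\in\cM$. I expect this to be the main obstacle, since the local integral representation-theoretic input at wild primes is nontrivial; the argument parallels the Drinfeld module construction in \cite{FGHP20}.

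For part (3), once such an $\cM$ is in hand the argument mirrors part (1). Since $\cM\otimes_{\cO_F}F=K$ is $F[G]$-free of rank one by the normal basis theorem, $\cM$ is $\cO_F[G]$-projective of constant rank one; hence $\cM/v\cM=\cM\otimes_{\cO_F}(\cO_F/v)$ is $(\cO_F/v)[G]$-projective of rank one. By Corollary~7.1.7 of \cite{FGHP20}, a finitely generated projective module of constant rank over a finite-dimensional algebra over a field is free, so $\cM/v\cM$ is $(\cO_F/v)[G]$-free of rank one, equivalently $\F_q[G]$-free of rank $d=[\cO_F/v:\F_q]$. The same observation about the $\F_q[G]$-structure of the $\Lie_E$ and $E$ functors used in part (1) then upgrades this to $\F_q[G]$-freeness of rank $nd$ for $\Lie_E(\cM/v)$ and $E(\cM/v)$.
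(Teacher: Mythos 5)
Parts (1) and (3) of your argument are correct and are, in substance, what the paper does: the entire content of the paper's proof is the observation that, since $\phi_E(c)=c\cdot\Id_n\tau^0$ and $d_E[c]=c\cdot\Id_n$ for $c\in\F_q$, the $\F_q[G]$-modules $E(B)$ and $\Lie_E(B)$ are both just $B^n$; the rank-one freeness of $\cO_K/v$ (tame case) and of $\cM/v$ is then quoted from \cite[Prop.~1.2.5, Prop.~7.2.4]{FGHP20}, which is exactly the Noether/normal-basis reduction you carry out by hand.

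The genuine gap is in part (2). Your construction hinges on the claim that at each wildly ramified $v$ there is an exponent $n_v\geq 1$ for which the ambiguous ideal $\prod_{\mathfrak{P}\mid v}\mathfrak{P}^{n_v}$ is $\cO_{F,v}[G]$-projective (equivalently, cohomologically trivial). This is not a classical result, and it is false in general. First, since $\mathfrak{P}^{a+e_v}=\pi_v\mathfrak{P}^{a}$, only the class of $n_v$ modulo the ramification index matters, so ``appropriately high powers'' of the different buy nothing; there are only finitely many candidates and they can all fail. Concretely, for a totally ramified $\Z/p$-extension of local fields with ramification break $b$ and different exponent $d=(b+1)(p-1)$, one has $(\mathfrak{P}^a)^{G}=\fp^{\lceil a/p\rceil}$ while $\mathrm{Tr}(\mathfrak{P}^a)=\fp^{\lfloor (a+d)/p\rfloor}$, so vanishing of $\hat H^0(G,\mathfrak{P}^a)$ forces $\lceil a/p\rceil=\lfloor(a+d)/p\rfloor$; an elementary check shows this has a solution $a$ only when $b=1$ (the weakly ramified case, where exactly the classes $a\equiv 1 \bmod p$ work --- this is Ullom's theorem on Galois cohomology of ambiguous ideals). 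Hence for any Artin--Schreier extension $y^p-y=f$ with a pole at $v$ of order $b\geq 2$ prime to $p$, no ambiguous ideal above $v$ is cohomologically trivial, and no ideal-theoretic $\cM$ of the kind you propose exists. Your observation that ideals are automatically $\tau$-stable is a genuine advantage of that approach, but it cannot be salvaged; the construction in \cite[\S 7.2]{FGHP20}, on which the paper relies, does not produce $\cM$ as an ideal, and both local projectivity at the wild primes and $\tau$-stability must be arranged by other means.
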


\begin{proof}
Observe that there are isomorphisms of $\Bbb F_q[G]$--modules (not of $A[G]$--modules!)
$$\Lie_E(\cO_K/v)\simeq E(\cO_K/v)\simeq (\cO_K/v)^n, \qquad \Lie_E(\cM/v)\simeq E(\cM/v)\simeq (\cM/v)^n, $$
for all $v\in{\rm MSpec}(\mathcal O_F)$ and all taming modules $\mathcal M$. Now, apply \cite[Prop 1.2.5, Prop. 7.2.4]{FGHP20}.
\end{proof}

Now, we fix a taming module $\mathcal M$. The Proposition above permits us to consider the monic polynomials $|\Lie_E(\cM/v)|_G$ and $|E(\cM/v)|_G$ in $\Bbb F_q[G][t]$,  whose degrees are both equal
to $n\cdot[\cO_F/v:\F_q]$, for all $v\in{\rm MSpec}(\cO_F)$. As a consequence, we may consider the quotients
$$\frac{|\Lie_E(\cM/v)|_G}{|E(\cM/v)|_G}\in (1+t^{-1}\Bbb F_q[G][[t^{-1}]]).$$

\begin{definition}\label{L-value-definition}
We define the $L$-value at $s=0$ associated to the data $(K/F,E,\cM)$ by
\[\Theta_{K/F}^{E,\cM}(0) := \prod_{v\in{\rm MSpec}(\cO_F) }\frac{|\Lie_E(\cM/v)|_G}{|E(\cM/v)|_G}\quad \in (1+t^{-1}\Bbb F_q[G][[t^{-1}]]).\]
\end{definition}
\noindent The fact that the infinite Euler product above converges (in the $t^{-1}$--adic topology) to an element in $(1+t^{-1}\Bbb F_q[G][[t^{-1}]])$
will be a direct consequence of our results below.
\begin{remark}
Note that for a finite set $S\subseteq {\rm MSpec}(\mathcal O_F)$ which contains the wild ramification locus $W$ for $\mathcal O_K/\mathcal O_F$, one can define the $S$--incomplete infinite Euler product 
\[\Theta_{K/F, S}^{E}(0) := \prod_{v\in{\rm MSpec}(\cO_F)\setminus S }\frac{|\Lie_E(\cO_K/v)|_G}{|E(\cO_K/v)|_G}\quad \in (1+t^{-1}\Bbb F_q[G][[t^{-1}]]).\]
It is shown in \S\ref{incomplete-L-section} below that for any such $S$ (in particular, for $S=S_0$ or $S=W$) one can construct taming modules $\cM$ (called $(E, S, \cO_K/\cO_F)$--taming in \S\ref{incomplete-L-section}), for which
 \[\Theta_{K/F, S}^E(0)= \Theta_{K/F}^{E,\cM}(0).\]
 So, understanding the Euler--completed special values in Definition \ref{L-value-definition} suffices.
\end{remark}

\begin{remark}\label{actual-values-remark}
It is interesting to note that, unlike the $L$--functions themselves,  the special $L$--values above make sense even if the $t$--module $E$ is not abelian. It is also the case that for our study of these special values, which is the main object
of this paper, we are not using the abelianness hypothesis on $E$.

However, for abelian $t$--modules $E$, and $v\in{\rm MSpec}(\mathcal O_F)\setminus S_0$, it is expected to have 
\begin{equation}\label{Euler-factors} P_v^{\ast, G, E}(1)^{-1}=\frac{|\Lie_E(\mathcal O_K/v)|_G}{|E(\mathcal O_K/v)|_G},\end{equation}
and therefore the infinite Euler product introduced in the last Remark above is expected to be the actual value at $s=0$ of the Euler--incomplete $L$--function
$\Theta_{K/F, S}^E(s)$ defined in \eqref{incomplete-L-definition} above, for all finite sets $S$ containing $S_0$. Note that equality \eqref{Euler-factors} above was proved for all Drinfeld modules $E$ in \cite{PR24}
and a strategy on how to prove it for pure abelian $t$--modules $E$ was also provided in the last section of loc.cit.  
\end{remark}

\subsection{Lattices, lattice--indices, and volumes} In this section, we extend the definitions, constructions and results of \S4 of \cite{FGHP20} to the context of $t$--modules.
Note that $\Lie_E(\cO_K)$ sits inside $\Lie_E(K_\infty)$ diagonally as a discrete, cocompact $A[G]$-submodule (because $\mathcal O_K$ sits inside $K_\infty$ that way, see loc.cit.) Also, recall that
$\Lie_E(K_\infty)$ is a free $k_\infty[G]$--module of rank $nr$, where $r:=[F:k]$ (see Lemma \ref{kinfty[G]-free-Lemma}.)

\begin{definition} (Lattices in $\Lie_E(K_\infty)$, see \cite[\S 4]{FGHP20}.)
\begin{enumerate}
\item An $A$-lattice in $\Lie_E(K_\infty)$ is a free $A$-submodule of $\Lie_E(K_\infty)$ of rank equal to ${\rm dim}_{k_\infty}\Lie_E(K_\infty)$, which spans $\Lie_E(K_\infty)$ as a $k_\infty$-vector space.
\item An $A[G]$-lattice in $\Lie_E(K_\infty)$ is an $A[G]$-submodule of $\Lie_E(K_\infty)$ which is an $A$-lattice in $\Lie_E(K_\infty)$.
\item A projective (respectively, free) $A[G]$-lattice in $\Lie_E(K_\infty)$ is an $A[G]$-lattice  which is projective (respectively, free) as an $A[G]$-module.
\end{enumerate}

\end{definition}

\begin{proposition}
If $\cM$ is a taming module for $\mathcal O_K/\mathcal O_F$, then $\Lie_E(\mathcal M)$ and $\Exp_E\inv(E(\cM))$ are $A[G]$-lattices in $\Lie_E(K_\infty)$.
\end{proposition}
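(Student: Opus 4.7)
The plan is to verify, for each of the two submodules, the three defining properties of an $A[G]$-lattice in $\Lie_E(K_\infty)$: being an $A[G]$-submodule, being $A$-free of rank $n[K:k]=\dim_{k_\infty}\Lie_E(K_\infty)$, and $k_\infty$-spanning $\Lie_E(K_\infty)$. A pervasive tool (as in the proofs of Lemma~\ref{A-free-Lemma} and Remark~\ref{kinfty-free-Remark}) will be the observation that $\Lie_E(M)$ and $M^n$ coincide as topological abelian groups for any $\cO_F\{\tau\}$-submodule $M\subseteq K$, only the $A$-action differing; hence classical discreteness and cocompactness of $M^n\subset K_\infty^n$ pass directly to $\Lie_E(M)\subset\Lie_E(K_\infty)$.

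For $\Lie_E(\cO_K)$: the $A[G]$-structure is inherited from the $G$-equivariance of $d_E$ on the $\cO_F[G]\{\tau\}$-module $\cO_K$. Applying Lemma~\ref{A-free-Lemma}(1) with $M=\cO_K$ (projective of rank $[K:F]$ over the Dedekind ring $\cO_F$) delivers $A$-freeness of rank $[K:F]\cdot n\cdot[F:k]=n[K:k]$. The $k_\infty$-spanning property follows automatically: any $A$-free submodule of full rank $n[K:k]$ inside an $n[K:k]$-dimensional $k_\infty$-vector space must $k_\infty$-span, since an $A$-basis is $k_\infty$-linearly independent by freeness.

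For $L:=\Exp_E^{-1}(E(\cM))$: since $\Exp_E$ is $A[G]$-linear and $E(\cM)$ is $A[G]$-stable, $L$ is an $A[G]$-submodule. To prove discreteness, I will use that the power series $\Exp_E(\bz)=\bz+\sum_{i\geq 1}E_i\bz^{(q^i)}$ has Jacobian identically $\Id_n$ in characteristic $p$ (each Frobenius-twist term has vanishing derivative), so $\Exp_E$ is a local analytic isomorphism at every point; the preimage of the discrete, closed set $E(\cM)\simeq\cM^n\subset K_\infty^n$ is therefore discrete and closed in $\Lie_E(K_\infty)$. The $A^{(\ell)}$-equivalence trick from the proof of Lemma~\ref{A-free-Lemma}(1) then exhibits $L$ as a finitely generated torsion-free module over the PID $A$, hence $A$-free of rank at most $n[K:k]$.

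The main obstacle is establishing that this rank equals $n[K:k]$, equivalently, that $L$ is cocompact in $\Lie_E(K_\infty)$. My plan is to analyze the continuous injective $A[G]$-linear map
\[\overline{\Exp_E}:\Lie_E(K_\infty)/L\hookrightarrow E(K_\infty)/E(\cM)\]
induced by $\Exp_E$. The target is compact (quotient of locally compact $K_\infty^n$ by the discrete cocompact lattice $\cM^n$). Openness of $\Exp_E$ (as a local homeomorphism) makes $\Exp_E(\Lie_E(K_\infty))$ an open---hence also closed---subgroup of $E(K_\infty)$, so its image in the quotient is an open, closed, hence compact, subgroup of $E(K_\infty)/E(\cM)$. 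Openness of $\overline{\Exp_E}$ onto its image then renders it a homeomorphism onto this compact subgroup, proving cocompactness of $L$; this forces $\mathrm{rank}_A L=n[K:k]$, and $k_\infty$-spanning follows as in the second paragraph.
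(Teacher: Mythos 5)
Your proof takes a genuinely different route from the paper's. The paper disposes of the analytic content by citing \cite[Prop. 2.5-2.6]{Dem} for the fact that $\Lie_E(\cO_K)$ and $\Exp_E\inv(E(\cO_K))$ are $A$-lattices, notes that both are $G$-stable, and then passes from $\cO_K$ to $\cM$ using only the finiteness of $\cO_K/\cM$ (a $G$-stable, finite-index $A$-submodule of an $A[G]$-lattice is again an $A[G]$-lattice). You instead reprove the analytic core from scratch and work with $\cM$ directly: the Jacobian-equals-$\Id_n$ local-isometry argument for discreteness of $\Exp_E\inv(E(\cM))$, and the identification of $\Lie_E(K_\infty)/\Exp_E\inv(E(\cM))$ with an open, closed, hence compact subgroup of $E(K_\infty)/E(\cM)$ for cocompactness, are precisely the content of the cited result, and your versions of them are sound (openness of $\Exp_E$ is also asserted in the remark following its definition in the paper). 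What your route buys is self-containedness and a direct treatment of $\cM$; what the paper's route buys is brevity.

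There is, however, one incorrect justification. The claim that "any $A$-free submodule of full rank $n[K:k]$ inside an $n[K:k]$-dimensional $k_\infty$-vector space must $k_\infty$-span, since an $A$-basis is $k_\infty$-linearly independent by freeness" is false: $A$-linear independence does not imply $k_\infty$-linear independence. For instance, $A+A\alpha$ with $\alpha\in k_\infty\setminus k$ is $A$-free of rank $2$ inside the one-dimensional $k_\infty$-vector space $k_\infty$. The correct deduction, for $\Lie_E(\cO_K)$ and for $\Exp_E\inv(E(\cM))$ alike, goes through discreteness: a discrete $A$-submodule of a $d$-dimensional $k_\infty$-vector space is free of rank $r\le d$ and spans an $r$-dimensional subspace, so cocompactness forces $r=d$ and hence spanning. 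Since you have already established discreteness and cocompactness for both modules, the repair costs nothing, but as written the spanning step rests on a false general principle and should be rephrased.
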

\begin{proof}
It is proved in \cite[Prop. 2.7-2.8]{Dem} that $\Lie_E(\cO_K)$ and $\Exp_E\inv(E(\cO_K))$ are $A$-lattices and it is clear that they are also $G$-modules.  The result then follows since $\cO_K/\cM$ is finite and since $\cM$ is also a $G$-module.
\end{proof}

Let $M$ be either a taming module $\cM$ or $\cO_K$. Then, we have the following exact sequence of topological $A[G]$-modules,
\begin{equation}
0  \longrightarrow  \Lie_E(K_\infty)/\Exp_E\inv(M)  \overset{\Exp_E}{\longrightarrow}  E(K_\infty)/E(M)  \overset{\pi}{\longrightarrow}   H(E/M)  \longrightarrow 0,
\end{equation}
where $H(E/M)$ is defined to be the cokernel of the exponential map,
\begin{equation}
H(E/M) := \frac{E(K_\infty)}{E(M) + \Exp_E(K_\infty)}.
\end{equation}
It is shown in \cite[Prop. 2.8]{Dem} that $H(E/\cO_K)$ is finite.  Now, from the definitions,  we have an exact sequence of $A[G]$--modules
\begin{equation}\label{hm-ho-equation}\frac{E(\cO_K)}{E(\cM) + \Exp_E(\Exp_E\inv(E(\cO_K)))}\to H(E/\cM)\to H(E/\cO_K)\to 0.\end{equation}
Since $E(\cO_K)/E(\cM)$ is finite, the finiteness of $H(E/\cO_K)$ implies that of $H(E/\cM)$, for all taming modules $\cM$.
The modules $H(E/\cO_K)$ and $H(E/\cM)$ are natural $t$--module generalizations of Taelman's class--modules, defined in the context of Drinfeld modules in \cite{T12}.

\begin{definition} (Lattice index, see \cite[\S 4.1]{FGHP20}.)
\begin{enumerate}
\item If $\Lambda_1,\Lambda_2 \subseteq \Lie_E(K_\infty)$ are free $A[G]$-lattices, let $X \in \GL_{nr}(k_\infty[G])$ be a change of basis matrix between them. Then we define the index
\[[\Lambda_1:\Lambda_2]_G := \det(X)^+.\]
We note that while $X$ is not unique, the monic representative of its determinant $\det(X)^+$ is unique. (See loc.cit.)
\item If $\Lambda_1,\Lambda_2 \subset \Lie_E(K_\infty)$ are projective $A[G]$-lattices, choose free $A[G]$-lattices $\cF_1 \supseteq \Lambda_1$ and $\cF_2 \supseteq \Lambda_2$ (mimic the proof in loc.cit. for existence)
and define
\[[\Lambda_1:\Lambda_2]_G := [\cF_1:\cF_2]_G \cdot \frac{|\cF_2/\Lambda_2|_G}{|\cF_1/\Lambda_1|_G}.\]
This definition is independent of the choice of $\cF_1$ and $\cF_2$. (See loc.cit.)
\end{enumerate}
\end{definition}

\begin{definition}\label{D:classC} (The Arakelov class, see \cite[\S 4.2]{FGHP20}.)
Let $\cC$ be the class of compact $A[G]$-modules $M$ which are $G$--c.t. and fit in a short
exact sequence of topological $A[G]$--modules
\[0\longrightarrow \Lie_E(K_\infty)/\Lambda \overset{\iota}\longrightarrow M \overset{\pi}\longrightarrow H\longrightarrow 0,\]
where $\Lambda$ is an $A[G]$--lattice in $\Lie_E(K_\infty)$ and $H$ is a finite $A[G]$--module.
\end{definition}

Note that $\Lie_E(K_\infty)$ is $A$-divisible, and thus $A$--injective. Thus, in the category of $A$-modules the above exact sequence splits. We let $s:H\to M$ be an $A$--linear  splitting map for $\pi$. Thus we have an $A$-module isomorphism
\[\Lie_E(K_\infty)/\Lambda \times s(H) \isom M.\]
It is important to note that, in general, the sequence above does not split in the category of $A[G]$-modules; this depends on the $G$--cohomology of $M$. 

Now, we follow \cite[4.2.2-4.2.3]{FGHP20} to introduce the notion of an $A[G]$-admissible lattice for an element $M\in\mathcal C$  and to  show that such lattices always exist.

\begin{definition}\label{D:Admissible lattice} For $M$ an element of the class $\cC$ and a section $s:H\to M$ as above, an  $A[G]$--lattice $\Lambda'$ in $\Lie_E(K_\infty)$ is called $(M, s)$--admissible if
\begin{enumerate}
\item $\Lambda\subseteq\Lambda'$;
\item $\Lambda'$ is $A[G]$--projective;
\item $\Lambda'/\Lambda\times s(H)$ is an $A[G]$--submodule of $M$.
\end{enumerate}
An $A[G]$--lattice $\Lambda'$ is called $M$--admissible if it is $(M,s)$--admissible for some $s$ as above.
\end{definition}

\begin{proposition}\label{P:Admissible}
For $(M, s)$ as above, there exist $A[G]$--free, $(M,s)$--admissible lattices. Further, for such a lattice $\Lambda'$, we have a short exact sequence of $A[G]$-modules
\begin{equation}\label{E:admissible-sequence} 0 \rightarrow \Lambda'/\Lambda\times s(H)\rightarrow M \rightarrow K_\infty/\Lambda'\rightarrow 0.
\end{equation}
Consequently $\Lambda'/\Lambda\times s(H)$ is a finite, $G$--c.t. $A[G]$-module.
\end{proposition}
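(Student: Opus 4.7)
The plan is to build $\Lambda'$ in two stages: first enlarge $\Lambda$ by just enough to absorb the discrepancy between the $A$--splitting $s$ and a hypothetical $A[G]$--splitting, then enlarge further to obtain an $A[G]$--free lattice. Admissibility is monotone under further enlargement, so the order of these two stages is flexible but convenient. For each $(g,h) \in G \times H$, the element $g \cdot s(h) - s(gh) \in M$ lies in $\ker \pi = \iota(\Lie_E(K_\infty)/\Lambda)$; let $\overline c(g,h) \in \Lie_E(K_\infty)/\Lambda$ be its preimage under $\iota$, and choose a lift $c(g,h) \in \Lie_E(K_\infty)$. Since $|G|$ and $|H|$ are both finite, one obtains only finitely many such lifts. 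Set
$$\Lambda_0 := \Lambda + \sum_{g\in G,\, h\in H} A[G]\cdot c(g,h) \;\subseteq\; \Lie_E(K_\infty).$$
This is an $A[G]$--submodule containing $\Lambda$. It is finitely generated over the PID $A$ and torsion--free, hence $A$--free; comparing $k_\infty$--spans (using flatness of $k_\infty/A$ for the upper bound and $\Lambda \subseteq \Lambda_0$ for the lower bound) shows $\rank_A \Lambda_0 = nr = \dim_{k_\infty}\Lie_E(K_\infty)$, so $\Lambda_0$ is an $A[G]$--lattice. By design, $\iota(\Lambda_0/\Lambda) + s(H) \subseteq M$ is $G$--stable, hence an $A[G]$--submodule of $M$.

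Next, mimicking the argument in \cite[\S 4.2]{FGHP20}, one enlarges $\Lambda_0$ to an $A[G]$--free lattice $\Lambda' \supseteq \Lambda_0$: fix a $k_\infty[G]$--basis of $\Lie_E(K_\infty)$ supplied by Lemma \ref{kinfty[G]-free-Lemma}, scale each basis vector by a sufficiently negative power of $t$ so that the scaled basis generates an $A[G]$--free lattice of rank $nr$ containing $\Lambda_0$, and let $\Lambda'$ be that lattice. Since $\Lambda_0 \subseteq \Lambda'$, the identity $g \cdot s(h) - s(gh) \in \iota(\Lambda_0/\Lambda) \subseteq \iota(\Lambda'/\Lambda)$ still holds, so $\Lambda'$ remains $(M,s)$--admissible. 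To derive the short exact sequence, observe that $\iota(\Lambda'/\Lambda) \cap s(H) = 0$ (since $\pi \circ s = \mathrm{id}_H$ while $\iota(\Lambda'/\Lambda) \subseteq \ker \pi$), so $\iota(\Lambda'/\Lambda) + s(H) = \iota(\Lambda'/\Lambda) \oplus s(H)$ as $A$--modules; and
$$M \,/\, \bigl(\iota(\Lambda'/\Lambda) \oplus s(H)\bigr) \;\cong\; \iota(\Lie_E(K_\infty)/\Lambda) \big/ \iota(\Lambda'/\Lambda) \;\cong\; \Lie_E(K_\infty)/\Lambda',$$
the second isomorphism being $G$--equivariant thanks to the cocycle relation $g\cdot s(h) = s(gh) + \iota(\overline c(g,h))$ with $\overline c(g,h) \in \Lambda'/\Lambda$. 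This is precisely \eqref{E:admissible-sequence}. Finiteness of the kernel then follows because $\Lambda'/\Lambda$ is a finitely generated torsion module over the PID $A$ (both lattices have the same $k_\infty$--rank) and $H$ is finite. For $G$--cohomological triviality, Lemma \ref{kinfty[G]-free-Lemma} makes $\Lie_E(K_\infty)$ free over $k_\infty[G]$, hence $G$--c.t.; $\Lambda'$ is $A[G]$--projective, hence $G$--c.t.; so $\Lie_E(K_\infty)/\Lambda'$ is $G$--c.t.; combined with the hypothesis $M \in \cC$, the Tate cohomology long exact sequence attached to \eqref{E:admissible-sequence} forces $\Lambda'/\Lambda \oplus s(H)$ to be $G$--c.t.

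The principal technical hurdle is the first stage: one must confirm that adjoining the finitely many cocycle lifts $c(g,h)$ yields an $A[G]$--submodule of the \emph{same} $k_\infty$--rank as $\Lambda$, so that $\Lambda_0$ is genuinely a lattice, not merely a finitely generated $A[G]$--submodule. Once this rank--preservation is verified, the enlargement to a free lattice and the derivation of the sequence \eqref{E:admissible-sequence} are standard $A[G]$--module and Tate cohomology manipulations, directly parallel to the Drinfeld module case of \cite[\S 4.2]{FGHP20}.
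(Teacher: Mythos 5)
Your overall strategy is the correct one and is the same as the argument the paper imports from \cite[\S 4.2]{FGHP20}: adjoin to $\Lambda$ lifts of the finitely many cocycle elements $\overline c(g,h)=\iota^{-1}\bigl(g\cdot s(h)-s(gh)\bigr)$, enlarge to a free lattice, and read off the sequence \eqref{E:admissible-sequence}; your derivation of that sequence and of cohomological triviality from it is fine. However, the step you yourself label the ``principal technical hurdle'' --- that $\Lambda_0=\Lambda+\sum_{g,h}A[G]\,c(g,h)$ is still a lattice --- is precisely the point that requires an argument, and the justification you sketch is not valid. Flatness of $k_\infty$ over $A$ does \emph{not} bound $\rank_A\Lambda_0$ by $\dim_{k_\infty}\Lie_E(K_\infty)$: the multiplication map $k_\infty\otimes_A k_\infty\to k_\infty$ is far from injective, and a finitely generated torsion-free $A$--submodule of a finite-dimensional $k_\infty$--space can have arbitrarily large $A$--rank (e.g.\ $A+A\alpha\subseteq k_\infty$ is $A$--free of rank $2$ for any $\alpha\in k_\infty\setminus k$). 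For arbitrary lifts $c(g,h)$, the module $\Lambda_0$ need not be discrete and need not have rank $nr$, so as written the existence claim is not established.

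The missing observation is that $s$ is $A$--linear and $H$ is finite. Pick $a\in A\setminus\{0\}$ with $aH=0$; then $a\cdot s(h)=s(ah)=s(0)=0$ for every $h$, hence $a\cdot\iota(\overline c(g,h))=g\cdot(a\,s(h))-a\,s(gh)=0$ and, since $\iota$ is injective, $\overline c(g,h)$ lies in the $a$--torsion of $\Lie_E(K_\infty)/\Lambda$. Thus the lifts may be chosen in $d_E[a]^{-1}\Lambda$, giving $\Lambda\subseteq\Lambda_0\subseteq d_E[a]^{-1}\Lambda$; being sandwiched between two lattices, $\Lambda_0$ is a lattice and $\Lambda_0/\Lambda$ is finite. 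A similar repair is needed in your second stage: one cannot in general rescale a $k_\infty[G]$--basis by $t^{-m}$ so that the resulting free lattice contains $\Lambda_0$, because the coordinates of generators of $\Lambda_0$ are Laurent \emph{series} in $t^{-1}$ rather than Laurent polynomials (already $A\cdot\tfrac{t}{t-1}\not\subseteq t^{-m}A$ for any $m$). The correct route, as in \cite[\S 4.1]{FGHP20}, is to produce an $A[G]$--free lattice $F$ commensurable with $\Lambda_0$ (using that $\Lie_E(K_\infty)$ is $k_\infty[G]$--free) and set $\Lambda':=d_E[b]^{-1}F$ for $b\in A\setminus\{0\}$ killing the finite module $\Lambda_0/(\Lambda_0\cap F)$; admissibility persists because $\Lambda'\supseteq\Lambda_0$ and condition (3) only requires $\overline c(g,h)\in\Lambda'/\Lambda$. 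With these two repairs your proof goes through.
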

\begin{proof} Mimic the proofs in loc.cit. and keep in mind that $\Lie_E(K_\infty)$ is $k_\infty[G]$--free.
\end{proof}

\begin{definition}[Volume Definition]
We first fix a (normalizing) projective $A[G]$-lattice $\Lambda_0\subset K_\infty^n$. For $M\in \cC$, let $s$ be a splitting map for $M$ and let $\Lambda'$ be an $(M,s)$-admissible lattice as above. Define
\begin{equation}\label{D:Vol}
\Vol(M) = \frac{|\Lambda'/\Lambda \times s(H)|_G}{\left [\Lambda':\Lambda_0\right ]_{G}},
\end{equation}
\end{definition}

We now give a collection of facts regarding the volume function, following \cite{FGHP20}.

\begin{proposition}[see Proposition 4.2.8 of \cite{FGHP20}]
The function $\Vol:\cC\to \laurent{\F_q}{T\inv}[G]^+$ satisfies the following properties.
\begin{enumerate}
\item  For each $M \in \cC$ given by an exact sequence as in Definition \ref{D:classC}, the value $\Vol(M)$ is independent of  choice of section $s$ and of choice of
$(M,s)$--admissible lattice $\Lambda'$.
\item  $\Vol(\Lie_E(K_\infty)/\Lambda_0) = 1$.
\item  If $M_1,M_2 \in \cC$, the quantity $\frac{\Vol(M_1)}{\Vol(M_2)}$ is independent of choice of $\Lambda_0$.
\item If $M\in\cC$, then ${\rm Vol}(M)$ depends only on the extension class $[M]\in{\rm Ext}^1_{A[G]}(H, K_\infty/\Lambda)$ (if $H$ and $\Lambda$ are fixed.)
\end{enumerate}
\end{proposition}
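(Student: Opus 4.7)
The plan is to adapt the proof of Proposition 4.2.8 of \cite{FGHP20} to the present setting, with the ambient module $K_\infty$ replaced by $\Lie_E(K_\infty)$. This substitution is legitimate because Lemma~\ref{kinfty[G]-free-Lemma} ensures that $\Lie_E(K_\infty)$ is free of finite rank $rn$ over $k_\infty[G]$, which is the only property of the ambient module that the arguments in loc.\ cit.\ actually exploit. I will freely use the standard properties of $|\cdot|_G$ and $[\cdot:\cdot]_G$ from \S 4 of \cite{FGHP20}: multiplicativity of the size in short exact sequences of finite, $G$-c.t., $\F_q[G]$-projective $A[G]$-modules; multiplicativity of the index under refinement of projective lattices; the identity $[\Lambda':\Lambda'']_G=|\Lambda'/\Lambda''|_G$ for projective lattices with finite, $G$-c.t.\ quotient; and the $A$-injectivity of $\Lie_E(K_\infty)$ (immediate, since it is a $k_\infty$-vector space, hence $A$-divisible, and $A$ is a PID).

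Parts (2), (3), (4) are short. For (2), the choices $\Lambda:=\Lambda_0$, $H:=0$, trivial $s$, $\Lambda':=\Lambda_0$ collapse the defining formula to $|0|_G/[\Lambda_0:\Lambda_0]_G = 1$. For (3), multiplicativity of the index gives $[\Lambda'_i:\Lambda_0]_G = [\Lambda'_i:\widetilde\Lambda_0]_G\cdot[\widetilde\Lambda_0:\Lambda_0]_G$ for any second choice $\widetilde\Lambda_0$, so replacing $\Lambda_0$ by $\widetilde\Lambda_0$ multiplies each $\Vol(M_i)$ by the common factor $[\widetilde\Lambda_0:\Lambda_0]_G$, which cancels in the ratio. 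For (4), any $A[G]$-linear isomorphism $\varphi:M_1\to M_2$ of extensions carries sections to sections via $s_2:=\varphi\circ s_1$, preserves admissibility of lattices (the same underlying $\Lambda'$ works for both), and restricts to an $A[G]$-isomorphism of the numerators $\Lambda'/\Lambda\times s_1(H)\cong\Lambda'/\Lambda\times s_2(H)$; since denominators are unaffected, $\Vol(M_1)=\Vol(M_2)$.

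For part (1), I first handle independence of the admissible lattice at fixed $s$. Given two $(M,s)$-admissible lattices $\Lambda'_1,\Lambda'_2$, I will adapt the existence argument of Proposition~\ref{P:Admissible} to produce a projective, $(M,s)$-admissible lattice $\Lambda''\supseteq\Lambda'_i$. Applying the multiplicativity of the index to $\Lambda_0\subseteq\Lambda'_i\subseteq\Lambda''$ and the multiplicativity of $|\cdot|_G$ to the short exact sequence
\[
0 \longrightarrow \Lambda'_i/\Lambda\times s(H) \longrightarrow \Lambda''/\Lambda\times s(H) \longrightarrow \Lambda''/\Lambda'_i \longrightarrow 0,
\]
together with $[\Lambda'':\Lambda'_i]_G=|\Lambda''/\Lambda'_i|_G$, one checks that $\Vol(M)$ computed with either $\Lambda'_i$ equals $|\Lambda''/\Lambda\times s(H)|_G/[\Lambda'':\Lambda_0]_G$.

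For independence of $s$, two sections differ by an $A$-linear map $\delta:=s_1-s_2:H\to\iota(\Lie_E(K_\infty)/\Lambda)$; by $A$-injectivity of $\Lie_E(K_\infty)$, the map $\iota^{-1}\circ\delta$ lifts to an $A$-linear $\phi:H\to\Lie_E(K_\infty)$. Given any $(M,s_2)$-admissible $\Lambda'_2$, a projective refinement $\Lambda''$ containing both $\Lambda'_2$ and $\phi(H)$ satisfies $\Lambda''/\Lambda\times s_1(H)=\Lambda''/\Lambda\times s_2(H)$ as subsets of $M$, because $s_1(h)-s_2(h)=\iota(\phi(h)\bmod\Lambda)$ already lies in $\iota(\Lambda''/\Lambda)$; hence $\Lambda''$ is simultaneously $(M,s_1)$- and $(M,s_2)$-admissible with the same numerator, and the first half of (1) closes the argument. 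The main obstacle I anticipate is precisely the construction of these common projective refinements: at each step one must verify that $\Lambda''$ can be chosen projective and admissible while keeping the quotients $\Lambda''/\Lambda'_i$ and $\Lambda''/\Lambda\times s(H)$ finite and $G$-c.t.\ (so that their $G$-sizes are defined). Both points, however, should reduce to minor adaptations of the refinement/admissibility arguments in \S 4 of \cite{FGHP20}, so no fundamentally new input is required.
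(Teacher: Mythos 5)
Your overall strategy coincides with the paper's, which simply defers to the proof of Proposition 4.2.8 of \cite{FGHP20}: replace $K_\infty$ by the free $k_\infty[G]$--module $\Lie_E(K_\infty)$ and rerun the index/size bookkeeping. Parts (2), (3), (4) and the first half of (1) (independence of $\Lambda'$ at fixed $s$, via a common projective admissible refinement $\Lambda''$ and the two multiplicativity identities) are argued correctly and are exactly the intended adaptations.

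There is, however, one step in your treatment of independence of $s$ that is false as written: you claim that, by $A$--injectivity of $\Lie_E(K_\infty)$, the map $\iota^{-1}\circ\delta:H\to\Lie_E(K_\infty)/\Lambda$ lifts to an $A$--linear map $\phi:H\to\Lie_E(K_\infty)$. Injectivity of the target is not what governs lifting along a surjection (that would be projectivity of the source), and in fact no nonzero $A$--linear map $H\to\Lie_E(K_\infty)$ exists at all: $H$ is a finite, hence torsion, $A$--module, while $\Lie_E(K_\infty)$ is a $k_\infty$--vector space and therefore torsion--free, so any such $\phi$ vanishes. Thus if $\delta\neq 0$ your $\phi$ does not exist. (Injectivity of $\Lie_E(K_\infty)/\Lambda$ is what the paper uses, and correctly so, to produce the sections $s$ in the first place.) The repair is immediate and is surely what \cite{FGHP20} does: $\delta(H)$ is a finite subset of $\iota(\Lie_E(K_\infty)/\Lambda)$, so the $A[G]$--submodule it generates in $\Lie_E(K_\infty)/\Lambda$ is finite, and its full preimage under $\Lie_E(K_\infty)\to\Lie_E(K_\infty)/\Lambda$ is an $A[G]$--lattice $\Lambda_\delta\supseteq\Lambda$ with finite quotient. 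Choosing a projective admissible $\Lambda''$ containing $\Lambda'_2+\Lambda_\delta$ then gives $\iota^{-1}(s_1(h)-s_2(h))\in\Lambda''/\Lambda$ for all $h$, after which your identification $\Lambda''/\Lambda\times s_1(H)=\Lambda''/\Lambda\times s_2(H)$ as subsets of $M$, and the rest of your argument, go through verbatim.
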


\begin{proof} See the proof of Proposition 4.2.8 in \cite{FGHP20}.
\end{proof}

\subsection{Main results} Now, we are ready to state the main theorems of this paper, proved in \S\ref{main-results-section} below. The data $K/F/k$ and $\cO_K/\cO_F/A$ are as in \S1.2 and $E$ is an $n$--dimensional $t$--module of structural morphism 
$\phi_E:A\to M_n(\cO_F)\{\tau\}.$\\

The main theorem, whose proof occupies most of this paper, is the following Equivariant Tamagawa Number Formula for $t$--modules. This extends  our main result in \cite{FGHP20} from the Drinfeld module to the $t$--module setting and is a $G$--equivariant refinement and $t$--module generalization of Taelman's original class--number formula for Drinfeld modules \cite{T12}.
\begin{theorem}[the ETNF for $t$-modules]\label{T:ETNF}  If $\cM$ is a taming module for $\cO_K/\cO_F$, then we have the following equality in $(1+t^{-1}\F_q[[t^{-1}]][G])$.
$$\Theta_{K/F}^{E, \cM}(0)=\frac{{\rm Vol}(E(K_\infty)/E(\cM))}{{\rm Vol}(\Lie_E(K_\infty)/\Lie_E(\cM))}.$$
\end{theorem}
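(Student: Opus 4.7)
The plan is to adapt the proof architecture of the main ETNF in \cite{FGHP20} (which treats the Drinfeld module case, $n = 1$) to arbitrary abelian $t$-modules. The extra tools that make this extension possible are Lemmas \ref{A-free-Lemma} and \ref{kinfty[G]-free-Lemma}, which guarantee that $\Lie_E(\cM)$ remains $A[G]$-projective and $\Lie_E(K_\infty)$ remains $k_\infty[G]$-free of rank $rn$. First I would exhibit both sides of the claimed equality as volumes of objects in the Arakelov class $\cC$: the module $\Lie_E(K_\infty)/\Lie_E(\cM)$ belongs to $\cC$ with lattice $\Lie_E(\cM)$ and trivial $H$-piece, while $E(K_\infty)/E(\cM)$ belongs to $\cC$ via the exponential exact sequence, with lattice $\Lambda_{E,\cM}:=\Exp_E^{-1}(E(\cM))$ and $H$-piece $H(E/\cM)$. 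The $G$-cohomological triviality of both objects follows from the $\cO_F[G]$-projectivity of $\cM$.

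The heart of the proof is a telescoping identity parametrized by nonzero elements $a \in A$. For each such $a$, I would apply the multiplicativity and lattice-invariance of Vol (the analogue of Proposition 4.2.8 of loc.\ cit.) to the short exact sequence
\begin{equation*}
0 \to E(\cM)/\phi_E(a)E(\cM) \to E(K_\infty)/\phi_E(a)E(\cM) \to E(K_\infty)/E(\cM) \to 0
\end{equation*}
and its Lie analogue (with $d_E[a]\Lie_E(\cM)$ in place of $\phi_E(a)E(\cM)$). The ratio of volumes of the middle terms equals the ratio of the monic $k_\infty[G]$-determinants of $\phi_E(a)$ on $E(K_\infty)$ and of $d_E[a]$ on $\Lie_E(K_\infty)$; these two monic determinants coincide because $\Exp_E$ locally identifies $\phi_E(a)$ with $d_E[a]$ on the tangent space, so they cancel. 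After simplification, one obtains the crucial identity
\begin{equation*}
\frac{|\Lie_E(\cM)/d_E[a]\Lie_E(\cM)|_G}{|E(\cM)/\phi_E(a)E(\cM)|_G} = \frac{\Vol(E(K_\infty)/E(\cM))}{\Vol(\Lie_E(K_\infty)/\Lie_E(\cM))}.
\end{equation*}

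By the Chinese Remainder Theorem the left-hand side factors over primes dividing $a$ into the $a$-partial Euler product $\prod_{v \mid a} |\Lie_E(\cM/v^{n_v(a)})|_G / |E(\cM/v^{n_v(a)})|_G$. Choosing a sequence $(a_n)$ whose supports exhaust $\MSpec(\cO_F)$ and whose $v$-adic valuations $n_v(a_n)$ grow without bound at every fixed $v$, the left side converges $t^{-1}$-adically to the full Euler product $\Theta_{K/F}^{E,\cM}(0)$, while the right side is independent of $n$, finishing the argument. This limit argument simultaneously justifies the convergence of the infinite Euler product asserted after Definition \ref{L-value-definition}.

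The principal obstacle is the identification of the two monic determinants via the exponential map. In the Drinfeld case ($n = 1$) this is a quick check on the leading $\tau$-coefficient of $\phi_E(a)$. For general $t$-modules, the nilpotent summand $N$ in $d_E[t]$ forces one to handle the determinant of $d_E[a]$ on $\Lie_E(K_\infty)$ as a monic element of $\F_q[G]((t^{-1}))^+$ via Proposition \ref{P:MonicProp}, and to verify that the analytic isomorphism $\Exp_E$ intertwines $\phi_E(a)$ and $d_E[a]$ at the level of monic representatives, not merely up to a unit in $\F_q[t][G]^\times$. Both Lemmas \ref{A-free-Lemma} and \ref{kinfty[G]-free-Lemma} are instrumental for making this comparison well-posed in the higher-dimensional $t$-module setting.
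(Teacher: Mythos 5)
Your opening step (placing both sides of the identity in the Arakelov class $\cC$, via the exponential sequence for $E(K_\infty)/E(\cM)$ and the trivial sequence for $\Lie_E(K_\infty)/\Lie_E(\cM)$) matches the paper, but from there you diverge onto a route that does not work as written. The paper instead runs everything through the nuclear-operator machinery of \S\ref{S:Nuclear}: the trace formula (Theorem \ref{T:TraceFormula}, via Corollary \ref{C:CorToTraceFormula}) identifies $\Theta_{K/F}^{E,\cM}(0)$ with the nuclear determinant of $(1-\phi_E(t)T^{-1})(1-d_E[t]T^{-1})^{-1}$ on $(K_\infty/\cM)^n$, and the volume formula (Theorem \ref{T:VolumeFormula}) identifies the reciprocal determinant with $\Vol(\Lie_E(K_\infty)/\Lie_E(\cM))/\Vol(E(K_\infty)/E(\cM))$, using that the set-theoretic identity map $E(K_\infty)/E(\cM)\to\Lie_E(K_\infty)/\Lie_E(\cM)$ is infinitely tangent to the identity because $\Exp_E$ is an everywhere convergent power series.

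The gaps in your route are concrete. First, the CRT step is false: $E(\cM)/\phi_E(a)E(\cM)$ is the cokernel of the $\tau$-twisted operator $\phi_E(a)$, not of scalar multiplication by $a$, so it does not decompose as $\prod_{v\mid a}E(\cM/v^{n_v(a)})$; moreover the Euler factors of $\Theta_{K/F}^{E,\cM}(0)$ involve $E(\cM/v\cM)$ for primes $v$ of $\cO_F$ (reductions to residue fields, with the reduced $t$-action), which is not a quotient of $E(\cM)$ by the image of any element of $A$, and primes of $\cO_F$ do not divide $a\in A$ in any case. Second, $\phi_E(a)$ is only $\F_q[G]$-linear (it is $\tau$-semilinear over $k_\infty$), so it has no ``monic $k_\infty[G]$-determinant'' on $E(K_\infty)$; controlling how such semilinear operators rescale equivariant volumes is precisely the problem the nuclear determinant formalism is built to solve, and it cannot be bypassed by an ordinary determinant. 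Third, your argument is internally inconsistent: the ``crucial identity'' asserts that its left-hand side is independent of $a$ (being equal to a fixed ratio of volumes), while the CRT step asserts that it equals the partial Euler product over $v\mid a$; since those partial products genuinely vary with $a$, both claims cannot hold simultaneously. At least one of these two steps --- in fact both --- must be replaced, and the replacement is essentially the trace formula and the volume formula of \S\ref{S:Nuclear} and \S\ref{S:Volume}.
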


\begin{remark} A result similar to the theorem above, but under certain restrictive conditions, was also obtained by T. Beaumont in \cite{Beaumont}. 
\end{remark}
\medskip

An important  corollary to the above Theorem is the following $t$--module analogue of the classical (number field) Refined Brumer--Stark Conjecture. (See detailed comments on the analogy in \S\ref{BrSt-section} and \S\ref{positive-integers-section} below.)

\begin{theorem}[Refined Brumer--Stark for $t$--modules]\label{T:BrSt}  Let $\cM$ be taming module for $\cO_K/\cO_F$ and let $\Lambda'$ be a $E(K_\infty)/E(\cM)$--admissible $A[G]$--lattice  in $\Lie_E(K_\infty)$. Then, we have
$$\frac{1}{[\Lie_E(\cM):\Lambda']_G}\cdot \Theta_{K/F}^{E, \cM}(0)\in{\rm Fitt}^0_{A[G]}H(E/\cM)\subseteq {\rm Fitt}^0_{A[G]}H(E/\cO_K).$$
\end{theorem}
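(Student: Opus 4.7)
The plan is to reduce the Brumer--Stark statement to a direct consequence of the ETNF (Theorem \ref{T:ETNF}) by computing the two volumes explicitly and then exploiting Fitting--ideal functoriality.

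First I would insert the ETNF to rewrite
\[
\Theta_{K/F}^{E,\cM}(0) = \frac{\Vol(E(K_\infty)/E(\cM))}{\Vol(\Lie_E(K_\infty)/\Lie_E(\cM))}.
\]
The denominator is immediate: $\Lie_E(K_\infty)/\Lie_E(\cM)$ fits into the exact sequence of Definition \ref{D:classC} with $\Lambda=\Lie_E(\cM)$ and $H=0$, and by Lemma \ref{A-free-Lemma}(2) the lattice $\Lie_E(\cM)$ is itself $A[G]$--projective, hence serves as an admissible lattice for itself. With the trivial section, $\Lambda'/\Lambda\times s(H)=0$ has $G$--size $1$, giving $\Vol(\Lie_E(K_\infty)/\Lie_E(\cM))=[\Lie_E(\cM):\Lambda_0]_G^{-1}$. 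For the numerator I feed in the given admissible lattice $\Lambda'$ (with an associated section $s$ of the exponential sequence) to obtain
\[
\Vol(E(K_\infty)/E(\cM)) = \frac{|B|_G}{[\Lambda':\Lambda_0]_G}, \quad \text{where}\quad B:=\Lambda'/\Exp_E^{-1}(E(\cM))\times s(H(E/\cM)).
\]
Taking the quotient of the two volumes and applying multiplicativity of the lattice index in the form $[\Lie_E(\cM):\Lambda_0]_G=[\Lie_E(\cM):\Lambda']_G\cdot[\Lambda':\Lambda_0]_G$ yields the key identity
\[
\frac{\Theta_{K/F}^{E,\cM}(0)}{[\Lie_E(\cM):\Lambda']_G} = |B|_G.
\]

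Next, I would show $|B|_G\in\Fitt^0_{A[G]}H(E/\cM)$. By Proposition \ref{P:Admissible}, $B$ is a finite, $G$--c.t. $A[G]$--module; since it is finite as an $\F_q$--vector space it is also $\F_q$--projective, hence $\F_q[G]$--projective by the criterion of Corollary 7.1.7 of \cite{FGHP20}. Thus $|B|_G$ is a well-defined monic generator of $\Fitt^0_{A[G]}(B)$. The essential step is producing an $A[G]$--linear surjection $B\twoheadrightarrow H(E/\cM)$: condition (3) of Definition \ref{D:Admissible lattice} places $B$ inside $E(K_\infty)/E(\cM)$ as an $A[G]$--submodule, and the genuinely $A[G]$--linear projection $\pi:E(K_\infty)/E(\cM)\to H(E/\cM)$ vanishes on the $\Lambda'/\Exp_E^{-1}(E(\cM))$ summand and restricts to the identity on $s(H(E/\cM))$ (since $\pi\circ s=\mathrm{id}$), so $\pi|_B$ is $A[G]$--linear and surjective. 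Functoriality of Fitting ideals under surjections then gives
\[
|B|_G\in\Fitt^0_{A[G]}(B)\subseteq\Fitt^0_{A[G]}H(E/\cM).
\]

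Finally, the second inclusion of the theorem follows from the right-exact sequence \eqref{hm-ho-equation}, whose surjection $H(E/\cM)\twoheadrightarrow H(E/\cO_K)$ yields $\Fitt^0_{A[G]}H(E/\cM)\subseteq\Fitt^0_{A[G]}H(E/\cO_K)$. The main obstacle is the subtle verification that the set-theoretic projection $B\to s(H(E/\cM))$, a priori defined only via an $A$--linear splitting, is in fact $A[G]$--equivariant on $B$; this is precisely what admissibility condition (3) secures, after which the rest is a formal combination of the ETNF with the compatibility of $|\cdot|_G$ under surjections of $\F_q[G]$--projective modules.
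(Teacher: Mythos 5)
Your proposal is correct and follows essentially the same route as the paper's proof: both combine the ETNF with the definition of the volume function to obtain the identity $\Theta_{K/F}^{E,\cM}(0)/[\Lie_E(\cM):\Lambda']_G=\bigl|\Lambda'/\Exp_E^{-1}(E(\cM))\times s(H(E/\cM))\bigr|_G$, and then use functoriality of Fitting ideals under the $A[G]$--linear surjections onto $H(E/\cM)$ and onto $H(E/\cO_K)$. The only difference is that you spell out the two volume computations and the lattice-index multiplicativity that the paper leaves implicit, which is a faithful unpacking rather than a different argument.
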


In order to study the imprimitive (Euler incomplete) special values $\Theta_{K/F, S}^E(0)$ discussed in \S\ref{L-values-section} above, one needs taming modules $\mathcal M$ which satisfy some additional conditions. These are called $(E, S, \cO_K/\cO_F)$--taming. 
See \S\ref{incomplete-L-section} below for the precise definition and existence. The result we obtain is the following.
\begin{theorem}[imprimitive ETNF]
Let $E$ be an abelian $t$-module as above. Let $S$ be a finite subset of ${\rm MSpec}(\mathcal O_F)$ which contains the wild ramification locus $W$ for $\cO_K/\cO_F$. Let  $\cM$ be any $(E, S, \cO_K/\cO_F)$--taming module. Then, we have the following equality in $(1+t^{-1}\F_q[[t^{-1}]][G])$:
\[\Theta_{K/F, S}^{E}(0)=\frac{{\rm Vol}(E(K_\infty)/E(\cM))}{{\rm Vol}(\Lie_E(K_\infty)/\Lie_E(\cM))}.\]
\end{theorem}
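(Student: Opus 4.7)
The plan is to deduce this imprimitive version as a direct consequence of Theorem \ref{T:ETNF} (the main ETNF for $t$-modules) combined with the prime-by-prime comparison between the Euler products of $\Theta_{K/F,S}^E(0)$ and $\Theta_{K/F}^{E,\cM}(0)$ afforded by the definition of an $(E, S, \cO_K/\cO_F)$--taming module. First, since by hypothesis $\cM$ is in particular a taming module for $\cO_K/\cO_F$, I would apply Theorem \ref{T:ETNF} to obtain
$$\Theta_{K/F}^{E, \cM}(0)=\frac{{\rm Vol}(E(K_\infty)/E(\cM))}{{\rm Vol}(\Lie_E(K_\infty)/\Lie_E(\cM))}.$$
So it suffices to prove the identity $\Theta_{K/F, S}^{E}(0)=\Theta_{K/F}^{E, \cM}(0)$ in $(1+t^{-1}\F_q[[t^{-1}]][G])$.

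To establish this identity, I would unwind both sides via Definition \ref{L-value-definition} and the Euler product for $\Theta_{K/F,S}^E(0)$ displayed in the remark preceding this theorem, and compare local factors at each prime $v\in{\rm MSpec}(\cO_F)$. The notion of $(E, S, \cO_K/\cO_F)$--taming (as it will be set up in \S\ref{incomplete-L-section}) is designed precisely to force two properties: (i) at primes $v\notin S$, the local factor $|\Lie_E(\cM/v)|_G/|E(\cM/v)|_G$ coincides with the unramified-at-$v$ factor $|\Lie_E(\cO_K/v)|_G/|E(\cO_K/v)|_G$ (most naturally by arranging $\cM_v=(\cO_K)_v$ locally at such $v$, so that $\cM/v \cong \cO_K/v$ as $\cO_F[G]\{\tau\}$--modules); and (ii) at primes $v\in S$, the local factors cancel, i.e.\ $|\Lie_E(\cM/v)|_G=|E(\cM/v)|_G$, so that these primes contribute trivially to the Euler product defining $\Theta_{K/F}^{E,\cM}(0)$, matching their exclusion from $\Theta_{K/F,S}^{E}(0)$. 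Together, (i) and (ii) give a term-by-term matching of the two Euler products.

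The main technical obstacle is really the construction of $(E, S, \cO_K/\cO_F)$--taming modules satisfying (i) and (ii) simultaneously, for an arbitrary finite $S\supseteq W$. Proposition 1.13 already produces a taming module $\cM$ which is locally equal to $\cO_K$ away from $W$; the work at primes in $S\setminus W$ is to modify $\cM$ locally at such primes so as to arrange the cancellation in (ii), without destroying the projectivity and $\cO_F[G]\{\tau\}$--submodule properties required of a taming module. I expect this local modification will be carried out in \S\ref{incomplete-L-section} by passing to a suitable $\cO_F[G]$--projective $\cO_F[G]\{\tau\}$--submodule of $\cM$ with controlled quotient, using the fact that at tamely ramified primes both $\Lie_E(\cM/v)$ and $E(\cM/v)$ are $\F_q[G]$--free of the same rank $n\cdot[\cO_F/v:\F_q]$ by Proposition 1.13(3), and hence a common monic generator of their Fitting ideals can be engineered.

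Combining the identity $\Theta_{K/F,S}^E(0)=\Theta_{K/F}^{E,\cM}(0)$ with the volume formula coming from Theorem \ref{T:ETNF} then yields the claimed equality, completing the proof.
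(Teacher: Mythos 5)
Your overall strategy --- reduce to the Euler--completed value $\Theta_{K/F}^{E,\cM}(0)$ by a prime--by--prime comparison of local factors, then invoke the volume formula --- is the same as the paper's, and your local--factor analysis is correct: for $v\notin S$ the support condition on $\cO_K/\cM$ forces $\cM/v\cong\cO_K/v$, and for $v\in S$ condition (2) of Definition \ref{D:supertaming} forces $\vert E(\cM/v)\vert_G=\vert\Lie_E(\cM/v)\vert_G$, so indeed $\Theta_{K/F,S}^E(0)=\Theta_{K/F}^{E,\cM}(0)$ (this is exactly Remark \ref{R:supertaming}).

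The gap is in your first step. An $(E,S,\cO_K/\cO_F)$--taming module is \emph{not} ``in particular a taming module for $\cO_K/\cO_F$'' when $S$ strictly contains the wild ramification locus $W$: the definition of a taming module requires $\cO_K/\cM$ to be supported only at wildly ramified primes, whereas an $(S,\cO_K/\cO_F)$--taming module only has $\cO_K/\cM$ supported in $S$, and the paper's construction (Lemma \ref{L:supertaming}, $\cM_\xi=\xi\cM$ with $\xi$ vanishing at all primes of $A$ below $S$) produces modules whose quotient is genuinely supported at the tame primes of $S$ as well. Consequently Theorem \ref{T:ETNF} does not apply as stated, and you cannot quote it as a black box. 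This is precisely why the paper's own proof splits into two cases: for $S=W$ the module is a taming module in the strict sense and Theorem \ref{T:ETNF} applies directly; for general $S$ one must check that the entire machinery (the bases of open neighborhoods built from $\cM$, Proposition \ref{P:TauEndomorphisms}, the localization lemma, the trace formula, and the volume formula) goes through for the larger class of $(S,\cO_K/\cO_F)$--taming modules --- the key inputs being only $\cO_F[G]$--projectivity and finiteness of $\cO_K/\cM$, not the support condition at $W$. The paper leaves this verification as an exercise, but it is a genuine extension of Theorem \ref{T:ETNF}, not an application of it. A secondary, non-fatal point: your guess at how condition (2) is engineered at primes of $S\setminus W$ (matching Fitting ideals of two free modules of equal rank) is not how the paper does it; the mechanism is that multiplication by $\xi$ makes $\tau$ act as zero on $\cM_\xi/v$ for $v\in S$, so that $E(\cM_\xi/v)$ and $\Lie_E(\cM_\xi/v)$ are literally the same $A[G]$--module. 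Since the theorem takes the existence of such $\cM$ as a hypothesis, this does not affect the logic of your proof, only your account of \S\ref{incomplete-L-section}.
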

\medskip

If $E$ is a Drinfeld module, one can twist it with powers of the Carlitz module $\mathcal C$ and obtain the $(1+m\cdot{\rm rank}(E))$--dimensional $t$--modules $E(m):=E\otimes\mathcal C^{\otimes m}$. By using the so--called universally taming modules for the 
data $(S, \cO_K/\cO_F)$ (see \S\ref{incomplete-L-section} for the definition and existence), we obtain the following equivariant Tamagawa number formula for the values $\Theta_{K/F, S}^E(m)$, at all $m\in\Bbb Z_{\geq 0}$.
\begin{theorem}[ETNF at positive integers] Let $E$ be a Drinfeld module of structural morphism 
$\phi_E:A\to\mathcal O_F\{\tau\}$. Let $S$ be a finite set of primes in ${\rm MSpec}(\mathcal O_F)$, containing the primes of bad reduction for $E$ and the wildly ramified primes in $\mathcal O_K/\mathcal O_F$.
Let $\cM$ be a $(\cO_K/\cO_F, S)$--universally taming module. Then, we have the following equalities in $(1+t^{-1}\F_q[[t^{-1}]][G])$
\[\Theta_{K/F, S}^{E}(m)=\frac{{\rm Vol}(E(m)(K_\infty)/E(m)(\cM))}{{\rm Vol}(\Lie_{E(m)}(K_\infty)/\Lie_{E(m)}(\cM))},\] 
for all $m\in\Bbb Z_{\geq 0}.$
\end{theorem}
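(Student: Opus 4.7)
The plan is to reduce the statement to the imprimitive ETNF at $s=0$ applied to the twisted $t$--motive $E(m):=E\otimes\mathcal{C}^{\otimes m}$, and this reduction proceeds in two independent steps: first an identity at the level of $L$--values, and second an identity at the level of taming modules that is guaranteed by the universality hypothesis.

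\smallskip

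For the first step, I would establish the compatibility
\[
\Theta_{K/F,S}^{E}(m)=\Theta_{K/F,S}^{E(m)}(0)
\qquad\text{for all } m\in\Bbb Z_{\geq 0}.
\]
This is an Euler factor by Euler factor computation. For each $v\notin S$ and each $v_0\in\mathrm{MSpec}(A)$ with $v\nmid v_0$, the $v_0$--adic Tate module of $E(m)$ is $T_{v_0}(E)\otimes_{A_{v_0}}T_{v_0}(\mathcal{C})^{\otimes m}$, and $T_{v_0}(\mathcal{C})$ is free of rank one with $\widetilde\sigma_v$ acting by multiplication by $Nv$. Dualizing and comparing characteristic polynomials of Frobenius on $H_{v_0}(E(m))^{I_v}\otimes_{A_{v_0}}A_{v_0}[G]$ with those on $H_{v_0}(E)^{I_v}\otimes_{A_{v_0}}A_{v_0}[G]$ yields $P_v^{\ast,G}(E(m),X)=P_v^{\ast,G}(E,Nv^m X)$. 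Since the definition of $\Theta$ on Goss's half--plane assigns to the integer $m$ the substitution $X=Nv^{-s}\mapsto Nv^{-m}$, the identity above follows by taking the infinite product over $v\notin S$.

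\smallskip

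For the second step, I would invoke the defining property of a universally taming module $\cM$: by construction (which is recalled in \S\ref{incomplete-L-section}), $\cM$ is simultaneously an $(E(m),S,\cO_K/\cO_F)$--taming module for every $m\in\Bbb Z_{\geq 0}$ and, in particular, an $\cO_F[G]\{\tau\}$--submodule of $\cO_K$ which is $\cO_F[G]$--projective with finite quotient supported at wildly ramified primes. Since $E(m)$ is an abelian $t$--module (as a tensor product of abelian $t$--modules), the hypotheses of the imprimitive ETNF theorem are met. Applying that theorem to the data $(K/F, E(m), \cM)$ gives
\[
\Theta_{K/F,S}^{E(m)}(0)=\frac{{\rm Vol}(E(m)(K_\infty)/E(m)(\cM))}{{\rm Vol}(\Lie_{E(m)}(K_\infty)/\Lie_{E(m)}(\cM))}.
\]
Chaining this with the $L$--value identity from the first step yields the desired formula for every $m\geq 0$ at once.

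\smallskip

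The main obstacle I anticipate is the bookkeeping in step one: one must check carefully that the Carlitz--twist compatibility $P_v^{\ast,G}(E(m),X)=P_v^{\ast,G}(E,Nv^m X)$ holds with the correct normalizations of Frobenius on the dual Tate modules and the correct behavior of the monic--representative convention under the substitution $s\mapsto s-m$ in Goss's half--plane. A related subtlety is verifying that the universal taming hypothesis on $\cM$ really does produce an admissible lattice and a well--defined volume ratio for every twist $E(m)$ uniformly; this is ensured by the construction in \S\ref{incomplete-L-section}, but requires that the Lie action of $A$ on $\Lie_{E(m)}(K_\infty)$ extend to $k_\infty$ in a way compatible with the nilpotent part of $d_{E(m)}[t]$, which is handled by Remark \ref{kinfty-free-Remark} applied to $E(m)$ (noting that $E(m)$ has a nontrivial nilpotent part coming from the Carlitz tensor factor). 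Once these normalizations are pinned down, the theorem follows formally from the two preceding results.
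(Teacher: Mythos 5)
Your proposal follows exactly the paper's route: establish $\Theta_{K/F,S}^{E}(m)=\Theta_{K/F,S}^{E(m)}(0)$ via the Carlitz twist of the dual Tate modules (the paper's equation \eqref{E:theta(m)}, using that $H^1_{v_0}$ commutes with tensor products and that $\widetilde\sigma_v$ acts by $Nv^{-1}$ on $H_{v_0}(\mathcal C)$), then apply the imprimitive ETNF (Theorem \ref{T:imprimitiveETNF}) to the abelian $t$--module $E(m)$ with the universally taming $\cM$. The only slip is a sign in your intermediate Euler--factor identity, which should read $P_v^{\ast,G}(E(m),X)=P_v^{\ast,G}(E,Nv^{-m}X)$ (so that $X=1$ gives the factor of $\Theta^E$ at $s=m$, not $s=-m$); with that normalization fixed, the argument is the paper's proof.
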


A notable consequence of the above theorem is the following Drinfeld module analogue of the classical (number field) Refined Coates--Sinnott Conjecture, linking special $L$--values at negative integers to the higher Quillen $K$--groups of the ring of algebraic integers in a number field.
 (See \S\ref{positive-integers-section} for more detailed comments on the analogy.)

\begin{theorem}[Refined Coates--Sinnott for Drinfeld modules]\label{T:CS}  For data as in the last theorem,  let $\Lambda'$ be a $E(m)(K_\infty)/E(m)(\cM)$--admissible $A[G]$--lattice  in $\Lie_{E(m)}(K_\infty)$. Then
$$\frac{1}{[\Lie_{E(m)}(\cM):\Lambda']_G}\cdot \Theta_{K/F, S}^{E}(m)\in{\rm Fitt}^0_{A[G]}H(E(m)/\cM)\subseteq {\rm Fitt}^0_{A[G]}H(E(m)/\cO_K),$$
for all $m\in\Bbb Z_{\geq 0}$.
\end{theorem}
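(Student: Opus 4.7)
The plan is to deduce this theorem directly from the Refined Brumer--Stark Theorem for $t$--modules (Theorem \ref{T:BrSt}) applied to each Carlitz twist $E(m)=E\otimes \mathcal{C}^{\otimes m}$, reconciled with the ETNF at positive integers through the universally taming module $\cM$.

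First, I would observe that the Carlitz twists $E(m)$ of a Drinfeld module $E$ are abelian $t$--modules (tensor products of abelian $t$--modules being abelian, as noted in \S\ref{L-values-section}), so the underlying hypotheses of Theorem \ref{T:BrSt} are satisfied for each $E(m)$ once one checks that $\cM$ is a taming module for $E(m)$ over $\cO_K/\cO_F$. This is precisely the purpose of the $(\cO_K/\cO_F, S)$--universally taming hypothesis: by its construction in \S\ref{incomplete-L-section}, $\cM$ is simultaneously an $(E(m), S, \cO_K/\cO_F)$--taming module for every $m\in\Bbb Z_{\geq 0}$, hence in particular a taming module in the sense of \S 1.2 for each $E(m)$.

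Next, for each $m\ge 0$ I would apply Theorem \ref{T:BrSt} to the data $(K/F, E(m), \cM)$ together with the $E(m)(K_\infty)/E(m)(\cM)$--admissible $A[G]$--lattice $\Lambda'$. This yields the inclusion
$$\frac{1}{[\Lie_{E(m)}(\cM):\Lambda']_G}\cdot \Theta_{K/F}^{E(m), \cM}(0)\in\Fitt^0_{A[G]}H(E(m)/\cM)\subseteq \Fitt^0_{A[G]}H(E(m)/\cO_K).$$

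To finish, one must identify $\Theta_{K/F}^{E(m), \cM}(0)$ with $\Theta_{K/F, S}^E(m)$. Applying the ETNF for $t$--modules (Theorem \ref{T:ETNF}) to the $t$--module $E(m)$ with taming module $\cM$ gives a volume--ratio formula on the right--hand side, and the statement of the ETNF at positive integers (the previous theorem) exhibits exactly the same ratio as $\Theta_{K/F, S}^E(m)$. Hence the two special values coincide, and substituting this identification into the Brumer--Stark inclusion above gives the claim. The main technical point underpinning this whole reduction (and really its only non--formal ingredient) is the existence of a single universally taming $\cM$ that works simultaneously for the full family $\{E(m)\}_{m\ge 0}$; that construction is the substantive work done in \S\ref{incomplete-L-section}, and without it, one would need a different taming module for each twist and could not coherently package the statement across all $m$.
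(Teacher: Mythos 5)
Your proposal is correct and follows essentially the same route as the paper, whose proof is literally to combine the ETNF at positive integers with the proofs of the Refined Brumer--Stark theorem and its corollary. The only caveat (which the paper itself also glosses over, leaving the general case as an exercise in the proof of the imprimitive ETNF) is that for $S\supsetneq W$ an $(S,\cO_K/\cO_F)$--taming module need not be a taming module in the strict sense of \S 1.2, since $\cO_K/\cM$ may be supported at tamely ramified primes of $S$; so one is really invoking the extension of the ETNF and volume machinery to these more general modules rather than Theorem \ref{T:BrSt} verbatim.
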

As noted in the abstract, Theorems \ref{T:BrSt} and \ref{T:CS} are key ingredients in developing an Iwasawa theory for Taelman's class modules. This will be addressed in an upcoming paper.\medskip

\centerline{--------------}
\medskip

The paper is organized as follows. In \S\ref{S:Nuclear}, we extend to the $t$--module context the $G$--equivariant theory of nuclear operators developed in \cite{FGHP20} for Drinfeld modules. In the same section, we prove a $G$--equivariant trace formula for certain nuclear operators in the $t$--module context. 
In \S\ref{S:Volume}, we use this trace formula to prove a volume formula for objects in the Arakelov class $\mathcal C$. Finally, in \S\ref{main-results-section}, we use this volume formula to prove the main theorem (Theorem \ref{T:ETNF}) and derive all its consequences stated above. The paper relies heavily on  techniques and results developed in \cite{FGHP20}, so the readers are strongly advised to familiarize 
themselves with that paper before embarking on this journey.

\section{Nuclear Operators and the Trace Formula}\label{S:Nuclear}
We briefly review some of the theory on  nuclear operators in the equivariant setting, developed in \S2 of \cite{FGHP20}.  Most of the results were proved in sufficient generality in \cite{FGHP20}, so we may use them here without alteration.  When minor alterations are required, we note the differences in the proofs.  Only in cases where substantial differences exist do we give full proofs in this section.
Let $R := \F_q[G]$ and let $V$ be a compact, projective, topological $R$-module.

\begin{definition}\label{D:OpenNeighborhood}
We let $\cU = \{U_i\}_{i\geq M}$, for some $M\geq 0$, be a sequence of open $R$-submodules of $V$ such that:
\begin{enumerate}
\item Each $U_i$ is $G$-c.t.
\item $U_{i+1}\subset U_i$ for all $i\geq M$.
\item $\cU$ is a basis of open neighborhoods of $0$ in $V$.
\end{enumerate}
\end{definition}
\begin{remark}As in loc.cit. if $V$ is a finitely generated $R[G]$--module (and therefore finite), we always take $U_i=\{0\}$, for all $i\in\Bbb Z_{\geq 0}$.
\end{remark}
Let $Z$ be a variable. Define the $R[[Z]]/Z^N$-module (respectively, $R[[Z]]$--module)
\[V[[Z]]/Z^N := V\otimes_R R[[Z]]/Z^N,  \text{ and } V[[Z]] := \varprojlim_{N\geq 1} V[[Z]]/Z^N,\]
and endow these with the natural (direct sum, respectively inverse limit) topologies.  We note that any continuous $R[[Z]]/Z^N$-linear (respectively, $R[[Z]]$--linear) endomorphism $\Phi$ of $V[[Z]]/Z^N$ (respectively, of $V[[Z]]$) is of the form
\[\Phi = \sum_{n=0}^{N-1} \phi_n Z^n,\quad \text{(respectively, $\Phi = \sum_{n=0}^\infty \phi_n Z^n$)},\]
where the $\phi_n$'s are continuous $R$--linear endomorphisms of $V$, for all $n$.

\begin{definition}[locally contracting and nuclear endomorphisms]${}$
\begin{enumerate}
\item We say that a continuous endomorphism $\phi$ of $V$ is locally contracting if there exists $M'\geq M$ such that $\phi(U_i)\subset U_{i+1}$ for all $i\geq M'$.  We will call $U_{M'}$ a nucleus for $\phi$.
\item We call a continuous $R[[Z]]$-linear endomorphism $\Phi$ of $V[[Z]]$ nuclear if for all $n\geq 0$ the endomorphisms $\phi_n$ of $V$ are locally contracting.  We define nuclear endomorphisms for $V[[Z]]/Z^N$ similarly.
\end{enumerate}
\end{definition}

Next, we state several facts about locally contracting and nuclear endomorphisms and the ensuing nuclear determinants. The reader should follow closely the proofs in \cite[\S 2.1]{FGHP20}.
\begin{proposition}[properties of endomorphisms]${}$
\begin{enumerate}
\item Any finite collection of locally contracting endomorphisms of $V$ has a common nucleus.
\item If $\phi$ and $\psi$ are locally contracting endomorphisms of $V$, then so are the sum $\phi+\psi$ and the composition $\phi\psi$.
\item Let $\Phi:V[[Z]]/Z^N\rightarrow V[[Z]]/Z^N$ be a nuclear endomorphism. Let $U$ and $W$ be common nuclei for the endomorphisms $\phi_n$'s. Then
\[\det\nolimits_{R[[Z]]/Z^N}(1 + \Phi\rvert V/U) = \det\nolimits_{R[[Z]]/Z^N}(1 + \Phi\rvert V/W).\]
\end{enumerate}
\end{proposition}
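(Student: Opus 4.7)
My plan for (1) is straightforward. Given locally contracting endomorphisms $\phi^{(1)},\dots,\phi^{(s)}$ with respective nuclei $U_{M_1},\dots,U_{M_s}$, I would set $M':=\max_i M_i$ and observe that since $\cU=\{U_i\}$ is a decreasing chain of open submodules, $U_{M'}\subseteq U_{M_j}$ for every $j$, so each contracting property $\phi^{(j)}(U_i)\subset U_{i+1}$ automatically holds for all $i\geq M'$. For (2), after applying (1) to produce a common nucleus $U_{M'}$ of $\phi$ and $\psi$, I would use the $R$-linearity of $U_{i+1}$ to get $(\phi+\psi)(U_i)\subset U_{i+1}$, and the nested chain $\psi(U_i)\subset U_{i+1}\subset U_i$ to deduce $\phi\psi(U_i)\subset\phi(U_i)\subset U_{i+1}$, for every $i\geq M'$.

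For (3), I would first reduce to the nested case $W\subseteq U$. Applying (1) to the finitely many coefficient operators $\phi_0,\dots,\phi_{N-1}$ yields a common nucleus $U_{M'}$ lying inside both $U$ and $W$, and one can compare $\det(1+\Phi\mid V/U)$ and $\det(1+\Phi\mid V/W)$ separately to $\det(1+\Phi\mid V/U_{M'})$, reducing the general identity to the nested case. Assume therefore $W\subseteq U$. Since $V/U$ and $V/W$ are finite and $G$-c.t., they are $R$-projective, and the induced short exact sequence of compact $R[[Z]]/Z^N$-modules
\[0\to (U/W)[[Z]]/Z^N\to (V/W)[[Z]]/Z^N\to (V/U)[[Z]]/Z^N\to 0\]
is $(1+\Phi)$-equivariant and exhibits $U/W$ as a direct summand of $V/W$, hence projective over $R$. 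The plan is then to invoke the multiplicativity of the nuclear determinant on such sequences, established in \cite{FGHP20}, which reduces (3) to the identity
\[\det\nolimits_{R[[Z]]/Z^N}\bigl(1+\Phi\,\bigm|\,(U/W)[[Z]]/Z^N\bigr)=1.\]

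The main obstacle is this last equality, and I expect to overcome it by proving that $\Phi$ is nilpotent on $(U/W)[[Z]]/Z^N$, which forces $1+\Phi$ to be unipotent with trivial determinant. Writing $U=U_a$ and $W=U_b$ with $a\leq b$, the key observation is that because $U$ is a common nucleus for every $\phi_n$, the contracting property propagates down the chain: for each $i\geq a$ and each $n$, $\phi_n(U_i)\subseteq U_{i+1}$, so by induction any composition $\phi_{n_1}\circ\cdots\circ\phi_{n_k}$ carries $U=U_a$ into $U_{a+k}$. Taking $k:=b-a$ forces the image into $U_b=W$, and hence every term $\phi_{n_1}\cdots\phi_{n_k}\,Z^{n_1+\cdots+n_k}$ in the expansion of $\Phi^{k}$ acts as zero on $U/W$. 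Consequently $\Phi^{\,b-a}=0$ on $(U/W)[[Z]]/Z^N$, and the argument concludes. The only subtlety I anticipate is the index bookkeeping in the reduction to $W\subseteq U$, but the total orderedness of $\cU$ under inclusion should keep it routine.
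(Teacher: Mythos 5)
Parts (1) and (2) are correct and are the standard arguments. In (3), your overall architecture --- reduce to nested nuclei $W\subseteq U$, split off $\det(1+\Phi\mid U/W)$ via multiplicativity over the short exact sequence, and show that this factor equals $1$ --- is exactly the route taken in \cite[\S 2.1]{FGHP20} (to which the paper defers), and your observation that every length-$(b-a)$ composition $\phi_{n_1}\circ\cdots\circ\phi_{n_k}$ carries $U=U_a$ into $U_b=W$ is correct.

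The gap is the final inference ``$\Phi$ nilpotent on $(U/W)[[Z]]/Z^N$, hence $1+\Phi$ unipotent, hence $\det=1$.'' Over a commutative ring with nilpotent elements, a unipotent endomorphism of a finitely generated projective module need \emph{not} have determinant $1$: for $S=\Bbb Z[x]/(x^2)$ and $T=1+x$ acting on the free rank-one module $S$, one has $(T-1)^2=0$ but $\det T=1+x\neq 1$. The rings relevant here, $R[[Z]]/Z^N$ with $R=\F_q[G]$, do have nilpotents (always for $N\geq 2$, and already in $R$ itself when $p$ divides $|G|$), so unipotence alone proves nothing. What actually forces the determinant to be $1$ is the finer structure you already have in hand: since $U=U_a$ is a common nucleus, each $\phi_n$ maps $U_i$ into $U_{i+1}$ for all $i\geq a$, so $1+\Phi$ preserves the finite filtration $U_a\supseteq U_{a+1}\supseteq\cdots\supseteq U_b=W$ by $G$-c.t.\ (hence $R$-projective) submodules and induces the \emph{identity} on every graded piece $U_i/U_{i+1}$. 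Applying multiplicativity of the determinant along this filtration of finite projective $R$-modules gives $\det_{R[[Z]]/Z^N}(1+\Phi\mid U/W)=\prod_{i=a}^{b-1}\det(1+\Phi\mid U_i/U_{i+1})=1$. (Note that abstract unipotence only yields the filtration by the images $\operatorname{im}(\Phi)^j$, which need not be direct summands, which is exactly why the determinant argument cannot be run there.) With this replacement the proof of (3) is complete; one minor simplification is that any two common nuclei are already nested, since they are by definition members of the totally ordered chain $\cU$.
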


\begin{definition}[nuclear determinants]${}$
\begin{enumerate}
\item For $\Phi$ a nuclear endomorphism of $V[[Z]]/Z^N$, we define the determinant of $(1+\Phi)$
\[\det\nolimits_{R[[Z]]/Z^N}(1 + \Phi\rvert V) := \det\nolimits_{R[[Z]]/Z^N}(1 + \Phi\rvert V/U),\]
where $U$ is any common nucleus for the endomorphisms $\phi_n$.
\item For $\Phi$ a nuclear endomorphism of $V[[Z]]$, we define the determinant of $(1+\Phi)$ in $\ds R[[Z]] = \varprojlim_N R[[Z]]/Z^N$ by
\[\det\nolimits_{R[[Z]]}(1 + \Phi\rvert V):=\varprojlim_N\det\nolimits_{R[[Z]]/Z^N}(1 + \Phi\rvert V).\]
\end{enumerate}
\end{definition}

\begin{proposition}[properties of nuclear determinants]\label{P:DeterminantFacts}${}$
\begin{enumerate}
\item (Multiplicativity of Determinants) If $\Phi$ and $\Psi$ are nuclear endomorphisms of $V[[Z]]$, then  the endomorphism $(1 + \Phi)(1 + \Psi) - 1$ is nuclear, and
\[\det\nolimits_{R[[Z]]}((1 + \Phi)(1 + \Psi)\rvert V) = \det\nolimits_{R[[Z]]}(1 + \Phi\rvert V)\det\nolimits_{R[[Z]]}(1 + \Psi\rvert V).\]
\item (Exact Sequences) Let $V'\subseteq V$ be a closed $R$-submodule of $V$ which is $G$--c.t. and let $V'' := V/V'$. Let $\mathcal{U}' = \{U_i'\}_i$ where $U_i' = U_i\cap V'$, and $\mathcal{U}'' = \{U_i''\}_i$ where $U_i''$ is the image of $U_i$ in $V''$. Assume that all the $U_i'$ and $U_i''$ are $G$-c.t. Let $\Phi = \sum \phi_nZ^n:V[[Z]]\rightarrow V[[Z]]$ be a nuclear endomorphism, such that $\phi_n(V')\subseteq V'$, for all $n$. Then the endomorphisms induced by $\Phi$ on $(V', \mathcal{U}')$ and $(V'',\mathcal{U}'')$ are nuclear and
$$\det\nolimits_{R[[Z]]}(1 + \Phi\rvert V) = \det\nolimits_{R[[Z]]}(1 + \Phi\rvert V')\det\nolimits_{R[[Z]]}(1 + \Phi\rvert V'').$$
\item (Independence of Basis of Open Neighborhoods) Let $\cU =  \{U_i\}$ and $\cU' =  \{U'_i\}$ be bases of open neighborhoods for $V$, satisfying the properties in Definition \ref{D:OpenNeighborhood}.  Let $\Phi$ be an endomorphism of $V[[Z]]$ which is nuclear with respect to both $\cU$ or $\cU'$.  Assume that for all $n\geq 0$, there exists an $M(n)\in\Z_{\geq 0}$ such that
for all $i\geq M(n)$ there exists a $j(i)\geq M(n)$ with the property that
$$U_{i}\supseteq U'_{j(i)} \text{ and } \phi_n(U_{i})\subseteq U'_{j(i)}.$$
Then, we have an equality
$${\rm det}_{R[[Z]]}(1+\Phi|V)={\rm det}{'}_{R[[Z]]}(1+\Phi|V),$$
where the determinant on the left is calculated with respect to $\cU$ and the determinant on the right with respect to $\cU'$.
\end{enumerate}
\end{proposition}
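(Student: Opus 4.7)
My plan is to prove all three items by reducing the computation of each nuclear determinant to an honest determinant of an endomorphism of a finitely generated, projective $R[[Z]]/Z^N$--module, and then appealing to standard commutative multilinear algebra, finally passing to the inverse limit over $N$. The reduction rests on two ingredients already at hand: first, for a nuclear $\Phi = \sum \phi_n Z^n$ the coefficients $\phi_n$ are locally contracting, so modulo any $Z^N$ we may choose a common nucleus $U$ on which $\Phi$ induces an actual $R[[Z]]/Z^N$--linear endomorphism of the finite, $G$--c.t.\ (hence $R$--projective) quotient $V/U$; second, the preceding proposition guarantees that locally contracting and nuclear endomorphisms are closed under sums and compositions and that the nuclear determinant is independent of which common nucleus is used.

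For item (1), I would first verify that $(1+\Phi)(1+\Psi) - 1 = \Phi + \Psi + \Phi\Psi$ is nuclear: its coefficient of $Z^n$ is a finite sum $\phi_n + \psi_n + \sum_{i+j=n}\phi_i\psi_j$ of locally contracting operators, which is itself locally contracting by the closure properties. I would then pick a simultaneous nucleus $U$ for all coefficients of $\Phi$, $\Psi$ and $\Phi\Psi$ occurring up to order $N-1$, and invoke the ordinary commutative-ring identity $\det((1+\Phi)(1+\Psi)) = \det(1+\Phi)\det(1+\Psi)$ on the finite, projective $R[[Z]]/Z^N$--module $V/U$. Passing to the inverse limit over $N$ yields the identity in $R[[Z]]$.

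For item (2), I would start with a common nucleus $U$ in $V$ and set $U' := U\cap V'$ and $U'' := U/U'$, viewed as a submodule of $V''$. By hypothesis $V'$ and $V''$, together with all $U'_i$ and $U''_i$, are $G$--c.t.; consequently the quotients $V/U$, $V'/U'$, $V''/U''$ are finite and $G$--c.t., hence $R$--projective. The short exact sequence $0 \to V'/U' \to V/U \to V''/U'' \to 0$ then becomes a split short exact sequence of projective $R[[Z]]/Z^N$--modules compatible with the action of $1+\Phi$, and the usual multiplicativity of determinants in short exact sequences over a commutative ring delivers the factorization. An inverse limit over $N$ produces the statement in $R[[Z]]$.

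For item (3), given the compatibility hypothesis relating $\cU$ and $\cU'$, I fix $N$ and choose $i$ large enough that for every $n < N$ the integer $j(i)$ from the hypothesis satisfies $U_i \supseteq U'_{j(i)}$ and $\phi_n(U_i) \subseteq U'_{j(i)}$. Then $U'_{j(i)}$ is a common nucleus for all the $\phi_n$'s with respect to both bases. Applying item (2) to the short exact sequence $0 \to U_i/U'_{j(i)} \to V/U'_{j(i)} \to V/U_i \to 0$ factors the determinant on $V/U'_{j(i)}$ as the product of those on $U_i/U'_{j(i)}$ and $V/U_i$. The hypothesis $\phi_n(U_i) \subseteq U'_{j(i)}$ forces each $\phi_n$ to act trivially on $U_i/U'_{j(i)}$, so its determinant factor equals $1$, and the remaining factor is precisely the determinant computed with respect to $\cU$; the same calculation with the roles of $\cU$ and $\cU'$ reversed shows the determinant with respect to $\cU'$ also equals the one on $V/U'_{j(i)}$. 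Taking the limit over $N$ concludes the argument. The main obstacle is item (3): one must carefully coordinate the choices of $N$ and $i$ to exploit the pointwise-in-$n$ compatibility hypothesis and apply (2) cleanly on both sides; items (1) and (2) are essentially transcriptions of commutative algebra facts once the reduction to the finite quotient $V/U$ has been carried out and $G$--cohomological triviality has been verified throughout.
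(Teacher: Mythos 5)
Your proposal is correct and follows essentially the same route as the paper, which states this proposition without proof and refers to \cite[\S 2.1]{FGHP20}: there, too, each nuclear determinant is computed on the finite, $G$--c.t.\ (hence $R$--projective) quotient by a common nucleus, items (1) and (2) reduce to ordinary multiplicativity of determinants over $R[[Z]]/Z^N$, and item (3) is handled by the short exact sequence $0\to U_i/U'_{j(i)}\to V/U'_{j(i)}\to V/U_i\to 0$ on which the $\phi_n$ act trivially on the subobject. The only points to tighten are bookkeeping ones you already flag: in (3) one must choose a single index $j$ (e.g.\ the minimum of the $j_n(i)$ over $n<N$, with $i$ large enough that this still exceeds the $\cU'$--nucleus threshold), and note that $U'_{j(i)}$ is literally a common nucleus only for $\cU'$ --- its agreement with the $\cU$--determinant comes from the exact-sequence argument, not from symmetry.
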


\begin{remark}\label{R:finite-det}
Assume that $V$ is a finite $A[G]$--module (i.e. an $R[t]$--module) which is $R$--free of rank $m$. Then we can view $\Phi:=-t\cdot T^{-1}$ as a nuclear endomorphism of $V[[T^{-1}]]$. Then $\det_{R[[T^{-1}]]}(1-t\cdot T^{-1}\mid V)$ as defined above is the usual determinant of $(1+\Phi)$ viewed as an endomorphism of the free $R[[T^{-1}]]$--module $V\otimes_RR[[T^{-1}]]$ of rank $m$.
We have the following equality in $R[t]$ (see Remark 2.1.11 in \cite{FGHP20}):
$$|V|_G= t^m\cdot\det\nolimits_{R[[T^{-1}]]}(1-t\cdot T^{-1}\mid V)\rvert_{T=t}.$$
\end{remark}

\medskip
Now, we describe the modules $V$ and bases of open neighborhoods $\cU$ which we will use throughout the rest of this paper.  Let $v$ be a prime of $F$ and let $K_v = \prod_{w|v} K_w$ be the product of the $w$-adic completions of $K$ for all primes $w$ of $K$, sitting above $v$.  Let $F_v$, $\cO_v$ and $\fm_v$ be the $v$-adic completion of $F$, its ring of integers and its maximal ideal, respectively.  Let $S_\infty$ denote the set of infinite primes of $F$ and set $K_\infty = \prod_{v\in S_\infty} K_v$.  Let $\cO_{F,\infty}$ be the intersection of the valuations rings in $F$ corresponding to the places of $S_\infty$ and let $\cO_{K,\infty}$ be its integral closure in $K$. (Note that these are both semi-local PIDs.)

\begin{definition}
Let $\cR$ be a Dedekind domain of field of fractions $F$ and let $\cS$ be its integral closure in $K$.  We say that an $\cR\{\tau\}[G]$-submodule $\cM\subset \cS$ is a taming module for $\cS/\cR$ if
\begin{enumerate}
\item $\cM$ is $\cR[G]$--projective of constant local rank $1$.
\item $\cS/\cM$ is finite and $(\cS/\cM)\otimes_{\cR}\cO_v=0$ whenever $v\in\mspec(\cR)$ is tame in $K/F$.
\end{enumerate}
\end{definition}

\begin{proposition}[see \S7.2 in \cite{FGHP20}]\label{P:taming-module} For $\cR$ and $\cS$ as in the definition above, the following hold.
\begin{enumerate}
\item Taming modules $\cM$ for $\cS/\cR$ exist.
\item If $\cS/\cR$ is tame, then any such $\cM$ equals $\cS$.
\item For any such $\cM$ and any $v\in\mspec(\cR)$, we have an $\Fq[G]$--module isomorphism
$$\cM/v\simeq \Fq[G]^{n_v}, \qquad \text{ where } n_v:=[\cR/v:\Fq].$$
\item For any such $\cM$ and $v\in\mspec(\cR)$ which is tame in $K/F$, we have $\cM/v=\cS/v$.
\item For $v\in\mspec(\cR)$, let $\cM_v$ be the $v$--adic completion of $\cM$ and let $\pi_v\in\cR$, such that $v(\pi_v)>0$. Then $\left\{\pi_v^i\cM_v\right\}_{i\geq 0}$ is a basis of open neighborhoods of $0$ in $K_v$ consisting of free $\cO_v[G]$--modules of rank $1$.
\end{enumerate}
\end{proposition}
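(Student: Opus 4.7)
The plan is to prove the five parts in the stated order, following closely the construction in \S7.2 of \cite{FGHP20} and making only the modest adjustments required by our slightly more general setup (an arbitrary Dedekind $\cR$ with integral closure $\cS$, rather than $\cR = \cO_F$).

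For (1), I will argue locally and glue. At each tame prime $v\in\mspec(\cR)$, Noether's theorem makes $\cS\otimes_{\cR}\cO_v$ already $\cO_v[G]$-free of rank $1$, and I set $\cM\otimes\cO_v := \cS\otimes\cO_v$ there. At each wild $v$, Hilbert $90$ makes the $F_v$-algebra $K_v$ into an $F_v[G]$-module that is free of rank $1$, so an $\cO_v[G]$-free rank-$1$ submodule of $\cS\otimes\cO_v$ of finite index will be obtained by choosing an $F_v[G]$-generator of $K_v$ and clearing denominators. Gluing these local choices via the standard patching lemma for finitely generated projective modules over Dedekind domains yields an $\cR[G]$-projective submodule $\cM\subseteq\cS$ with finite cokernel supported only at wild primes. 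Stability under $\tau$, if not automatic, can be forced by replacing $\cM$ by $\sum_{i\geq 0}\tau^i\cM$; this remains $\cR[G]$-projective of local rank $1$ because $\tau$ only enlarges the content at primes already in the wild locus.

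Parts (2) and (4) will then be immediate from the definition: the hypothesis $(\cS/\cM)\otimes\cO_v = 0$ at tame $v$ gives $\cM/v = \cS/v$ there, and if no primes are wild then $\cS/\cM = 0$ globally, hence $\cM = \cS$. For (3), note that $\cM/v = \cM\otimes_{\cR}(\cR/v)$ is projective of constant rank $1$ over $(\cR/v)[G]\simeq \F_{q^{n_v}}[G]$; restricting scalars along $\F_q[G]\hookrightarrow \F_{q^{n_v}}[G]$ makes it $\F_q[G]$-projective of constant rank $n_v$ by a dimension count over $\F_q$. The upgrade from projective to free is exactly Corollary 7.1.7 of \cite{FGHP20}, which uses the decomposition $G = P\times\Delta$, the fact that $\F_q[P]$ is a local ring so that projective $\F_q[P]$-modules of constant rank are free, and reassembly along the $\Delta$-isotypic decomposition.

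For (5), $\cM_v:=\cM\otimes_{\cR}\cO_v$ is a projective $\cO_v[G]$-module of constant rank $1$. Since $\cO_v[G]$ is semi-local (its reduction mod $\pi_v$ is the finite-dimensional algebra $(\cR/v)[G]$), projective modules over it of constant rank are free, again by the $\cO_v$-version of Corollary 7.1.7 of \cite{FGHP20}. Thus $\cM_v\simeq\cO_v[G]$, and $\{\pi_v^i\cM_v\}_{i\geq 0}$ is a basis of open neighborhoods of $0$ in $K_v$ consisting of free $\cO_v[G]$-modules of rank $1$. The main obstacle is the wild-prime step in (1): producing an $\cO_v[G]$-free rank-$1$ submodule of finite index in $\cS\otimes\cO_v$ at each wild $v$ and then gluing across all wild primes while preserving $\tau$-stability. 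Once that existence step is in hand, everything else reduces to bookkeeping and to appeals to Corollary 7.1.7 of \cite{FGHP20}.
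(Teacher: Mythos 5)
The paper itself gives no proof of this proposition --- it is quoted from \S 7.2 of \cite{FGHP20} --- so the comparison is with the construction there. Your overall architecture is the right one (Noether's normal integral basis theorem at tame primes, the normal basis theorem plus clearing denominators at wild primes, gluing over the finite wild locus, and Corollary 7.1.7 of \cite{FGHP20} for the ``projective of constant rank $\Rightarrow$ free'' upgrades in parts (3) and (5)), and parts (2)--(5) are essentially correct as you describe them.

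There is, however, a genuine gap in the $\tau$-stabilization step of part (1), which you correctly identify as the main obstacle but then resolve incorrectly. Replacing $\cM$ by the $\cR[G]$-module $\cM'$ generated by $\bigcup_{i\geq 0}\tau^i(\cM)$ \emph{enlarges} $\cM$ inside $\cS$, so at a wildly ramified prime $v$ all you know is that $\cM'\otimes\cO_v$ is an $\cO_v[G]$-lattice squeezed between the free lattice $\cM\otimes\cO_v$ and $\cS\otimes\cO_v$. That does not imply projectivity: $\cS\otimes\cO_v$ itself contains free $\cO_v[G]$-lattices of finite index and yet, by Noether's criterion, is \emph{not} $\cO_v[G]$-projective precisely when $v$ is wild. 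So ``$\tau$ only enlarges the content at primes already in the wild locus'' controls the support of $\cS/\cM'$ but says nothing about the projectivity of $\cM'$. The repair goes in the opposite direction, by shrinking: having built a projective, locally rank-one $\cM_0$ with $\cS/\cM_0$ finite and supported at the wild primes, choose an ideal $\fa\subseteq\cR$ that is a product of powers of the wild primes with $\fa\,\cS\subseteq\cM_0$, and set $\cM:=\fa\,\cM_0$. This is still $\cR[G]$-projective of constant local rank one (twisting by an invertible, $G$-fixed ideal, locally principal), $\cS/\cM$ is still finite and supported at the wild primes, and since $\tau(y)=y^q$ in characteristic $p$ one gets
$$\tau(\cM)\subseteq\fa^{q}\cS=\fa\cdot\bigl(\fa^{\,q-1}\cS\bigr)\subseteq\fa\cdot\cM_0=\cM,$$
which is the required $\tau$-stability. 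With this replacement the rest of your plan goes through.
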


\begin{definition}
We call a taming module for $\cO_K/\cO_F$ simply a taming module for $K/F$ and call a taming module for $\cO_{K,\infty}/\cO_{F,\infty}$ an $\infty$-taming module for $K/F$.
\end{definition}

\begin{definition}\label{D:UNeighborhoods}
Now let $(\cM, \cM^\infty)$ be a taming pair for $K/F$ consisting of a taming module $\cM$ and an $\infty$-taming module $\cM^\infty$ for $K/F$.  Below, we define bases of open neighborhoods satisfying Definition \ref{D:OpenNeighborhood} for three different types of compact, projective $R$-modules V.
\begin{enumerate}
\item For a prime $v$ of $F$, let $V := K_v^n$. In this case, define
\[\{\bU_{i, v}\}_{i\geq 0}:=\{(t^{-i}\mathcal M_v^\infty)^n\}_{i\geq 0}, \qquad\qquad \{\bU_{i,v}\}_{i\geq 0}:=\{(\frak m_v^i\mathcal M_v)^n\}_{i\geq 0},\]
for $v\in S_\infty$ and $v\notin S_\infty$, respectively, where $\cM_v$ and $\cM_v^\infty$ denote the obvious $v$-adic completions.  Note that these neighborhoods satisfy the properties in Definition \ref{D:OpenNeighborhood}, since finite products of $G$-c.t. (therefore projective) $R$--modules
are also $G$-c.t.
\item Let $V$ be an $A[G]$-module in class $\cC$ (Definition \ref{D:classC}), of structural exact sequence
\[0\longrightarrow \Lie_E(K_\infty)/\Lambda \overset{\iota}\longrightarrow V\longrightarrow H\longrightarrow 0.\]
In particular, $V$ is a compact, projective $R$--module.
We let $\bU_{i,\infty} = \prod_{v\in S_\infty} \bU_{i,v}$ and recall that $\Lambda$ is discrete in $K_\infty^n$.  Thus there exists $\ell \geq 0$ such that $\bU_{i,\infty}\cap \Lambda = \{0\}$, for all $i\geq \ell$.  Thus, we define a basis of open neighborhoods for $V$ by setting
\[\cU = \{\iota(\bU_{i,\infty})\}_{i\geq \ell}.\]
For simplicity of notation, we will often omit $\iota$ and think of it as simple inclusion.
\item Now, let $S$ be a finite set of primes in $F$ containing $S_\infty$ and set $K_S = \prod_{v\in S} K_v$. Let
$$\OX_{F,S} = \{\alpha\in F: v(\alpha) \geq 0, \text{ for } v\not\in S\}$$
 be the ring of $S$-integers in $F$. Let $\mathcal{M}_S = \mathcal{M}\otimes_{\OX_F}\OX_{F,S}\subseteq K_S$ and let
 $$V:=(K_S/\cM_S)^n.$$
As above, set
\[\bU_{i,S} = \prod_{v\in S} \bU_{i,v}\subset K_S^n.\]
Since $\cM_S$ is discrete in $K_S$, there is an $\ell \geq 0$ such that $\bU_{i,S}\cap \cM_S^n = \{0\}$, for all $i\geq \ell$. We set $\cU$ to be the set of (isomorphic) images of the $\bU_{i,S}$ in $(K_S/\cM_S)^n$, for all $i\geq \ell$.
\end{enumerate}
\end{definition}

\begin{proposition}\label{P:TauEndomorphisms}
Let $(\cM, \cM^\infty)$ be a taming pair for $K/F$ and let $S$ be a finite set of primes of $F$ containing $S_\infty$.
\begin{enumerate}
\item Let $\phi = D\tau^\ell$ for some $D\in M_n(\cO_{F,S})$ and $\ell\geq 1$.  Then $\phi$ is a locally contracting endomorphism of $(K_S/\cM_S)^n$.
\item Any $\Phi \in M_n(\cO_{F,S})\{\tau\} [[Z]]\tau$ is a nuclear endomorphism of $(K_S/\cM_S)^n[[Z]]$.
\item Let $D \in M_n(\cO_{F,S})$, let $\alpha \in \cO_{F,S}$ and let $\phi = D\tau^\ell$ for $\ell\geq 1$. Then we have
$$\det\nolimits_{R[[Z]]}(1 + \alpha\phi Z^m\rvert K_S/\mathcal{M}_S) = \det\nolimits_{R[[Z]]}(1 + \phi \alpha Z^m\rvert K_S/\mathcal{M}_S),$$
for all $m\in\Z_{\geq 1}$.
\end{enumerate}
\end{proposition}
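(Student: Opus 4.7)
The plan has three parts; (1) and (2) are routine $v$-adic contraction bookkeeping, while (3) is the $n$-dimensional generalization of the Sylvester-type determinant identity of \cite{FGHP20} and is the substantive point.

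For part (1), I would analyze the action of $\phi = D\tau^\ell$ one $v$-component at a time in the decomposition $\bU_{i,S} = \prod_{v\in S} \bU_{i,v}$. For $v \in S \setminus S_\infty$, the $\tau$-stability of $\cM_v$ (inherited from $\cM$ being an $\cO_F\{\tau\}[G]$-module) combined with $v(x^q) = q\cdot v(x)$ yields $\tau^\ell(\mathfrak{m}_v^i \cM_v) \subseteq \mathfrak{m}_v^{q^\ell i}\cM_v$; at $v\in S_\infty$, the analogous computation uses the $t^{-1}$-adic filtration of $\cM^\infty_v$. Left multiplication by $D \in M_n(\cO_{F,S})$ shifts each $v$-adic filtration by at most an $i$-independent constant $c_v \geq 0$ controlled by the minimum $v$-valuation of the entries of $D$, so $q^\ell i - c_v \geq i+1$ for all sufficiently large $i$, yielding a common nucleus for all $v\in S$ at once. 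Part (2) then follows formally: any $\Phi \in M_n(\cO_{F,S})\{\tau\}[[Z]]\tau$ has coefficients $\phi_n$ which are finite $\cO_{F,S}$-linear combinations of operators $D\tau^\ell$ with $\ell \geq 1$, each locally contracting by~(1), and sums of locally contracting endomorphisms are locally contracting by item~(2) of the ``properties of endomorphisms'' proposition stated immediately above.

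For part (3), rewrite
\[
\alpha\phi = (\alpha D)\tau^\ell,\qquad \phi\alpha = D\tau^\ell\alpha = (D\alpha^{q^\ell})\tau^\ell,
\]
so both composites are locally contracting by (1). The plan is to apply the rectangular form of Sylvester's determinant identity $\det(I + AB) = \det(I + BA)$ with $A = \alpha$ and $B = \phi Z^m$, but on \emph{two} different finite quotients of $V = (K_S/\cM_S)^n$. Concretely, I would choose a pair of nested nuclei $U' \subseteq U$ from the basis $\cU$ deep enough that $\alpha(U') \subseteq U$ and $\phi(U) \subseteq U'$ (and hence both $\alpha\phi$ and $\phi\alpha$ preserve both $U$ and $U'$). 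Then $\alpha$ and $\phi Z^m$ induce $R[[Z]]$-linear maps $V/U' \to V/U$ and $V/U \to V/U'$ between finitely generated projective $R[[Z]]$-modules, and Sylvester's identity delivers
\[
\det\nolimits_{R[[Z]]}(I + \alpha\phi Z^m\mid V/U) = \det\nolimits_{R[[Z]]}(I + \phi\alpha Z^m\mid V/U').
\]
By the independence-of-common-nucleus clause built into the definition of the nuclear determinant, each side equals the corresponding determinant in the statement, and the identity follows.

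\emph{Main obstacle.} The only delicate point, and the sole new ingredient beyond the scalar ($n=1$) case of \cite{FGHP20}, is precisely what forces the two-nucleus setup: multiplication by $\alpha \in \cO_{F,S}$ does not preserve any single nucleus $U$ when $\alpha$ has strictly negative valuation at some prime of $S$, so a naive cyclic-trace or one-nucleus Sylvester argument does not apply. The nested pair $U' \subseteq U$, with $U'$ chosen deeper than $U$ by exactly the amount needed to absorb the negative valuations of $\alpha$, circumvents this while keeping the rectangular Sylvester identity directly applicable. Beyond this bookkeeping, the argument is parallel to the Drinfeld-module case, with $n\times n$ matrix products replacing scalar multiplication throughout.
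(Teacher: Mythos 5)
Your plan for parts (1) and (2) is essentially the paper's: for (1), use $\tau$-stability of the taming modules plus an $i$-independent bound on the valuation shift caused by multiplication by $D$ (the paper phrases this via $\max_{i,j}(v(D_{i,j}))$ over $v\in S$; you phrase it via $\min$, which controls the worst-case shift, but the point is the same), and then (2) follows formally by closure of locally contracting operators under sums. For (3), your two-quotient Sylvester argument is the right mechanism and matches what the paper (by reference to \cite[Prop.~2.3.5]{FGHP20}) is doing.

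The one thing I would correct is the framing in your "main obstacle" paragraph. You present the need for a nested pair of nuclei $U'\subseteq U$ — forced by the fact that multiplication by $\alpha\in\cO_{F,S}$ loses depth at primes of $S$ where $v(\alpha)<0$ — as "the sole new ingredient beyond the scalar ($n=1$) case." That issue is in no way specific to $t$-modules: already in the Drinfeld-module setting of \cite{FGHP20}, $\alpha$ is a scalar in $\cO_{F,S}$ and can have negative valuations at primes of $S$, and $\alpha\phi$, $\phi\alpha$ differ because $\tau^\ell\alpha = \alpha^{q^\ell}\tau^\ell$, so the same two-nucleus Sylvester device is already required there. What is actually new in the $t$-module generalization, and what the paper flags as its "minor change," is only the bookkeeping of replacing a single scalar valuation $v(D)$ by the entrywise quantities $v(D_{i,j})$ (and, earlier, $N_t$-related constants) when choosing the index of the deeper nucleus. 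Your proof is correct; just attribute the two-nucleus idea to \cite{FGHP20} rather than to the passage from $n=1$ to $n>1$.
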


\begin{proof}
For $D\in M_n(\cO_{F,S})$, we denote the $(i,j)$ entry of $D$ by $D_{i,j}$.

The proof of part (1) follows the proof of \cite[Lemma 2.3.3]{FGHP20} with a minor alteration:  here we let $m\in\Z_{\geq 1}$, such that $$m\geq \max\{\frac{1 - \min(v(D_{i,j}))}{q-1}\mid v\in S\}.$$  Then $\tau(\mathcal M_v)\subseteq \mathcal M_v$ and $\tau(\mathcal M^\infty_v)\subseteq \mathcal M^\infty_v$, so we have $\phi(\bU_{i, S})\subseteq \bU_{i + 1, S}$, for all $i\geq m$.

The proof of part (2) is identical to that of \cite[Cor. 2.3.4]{FGHP20}.

The proof of part (3) follows as in the proof of \cite[Proposition 2.3.5]{FGHP20}, with one minor change: assuming that $D\ne 0$ (as the statement is clearly true for $D=0$),  in the proof in loc.cit. we set $a\in\Bbb Z_{\geq 0}$, such that $\bU_{a, S}\cap \cM_S^n=\{0\}$ and enlarge it if necessary, so that 
\[b:=\min\{a + \frac{\min (v(D_{i,j}))}{e_v}\mid v\in S\} > \max(\{0\}\cup\{-v(\alpha)\mid v\in S\}),\]
where $e_v:=v(t^{-1})$ if $v\in S_\infty$ and $e_v=1$ otherwise.
The rest of the arguments then carry through without modification.
\end{proof}

\begin{lemma}\label{L:independence-on-W} Let $(\cM, \cM^\infty)$ and $S$ as in the last proposition. Let $\Phi\in{\text M}_n(\cO_{F,S})\{\tau\}[[Z]]\tau$, viewed as
an $R[[Z]]$--endomorphism of $(K_S/\cM_S)^n[[Z]]$. Then the nuclear determinant
$${\rm det}_{R[[Z]]}(1+\Phi|(K_S/\cM_S)^n)$$
is independent of the taming pair $(\mathcal M, \mathcal M^\infty)$.
\end{lemma}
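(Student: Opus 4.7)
My plan is to reduce the claim to two separate sub-problems---varying only $\cM^\infty$, and varying only $\cM$---via the intermediate pair $(\cM_1,\cM_2^\infty)$, and then attack each by a different method. The variation of $\cM^\infty$ alone (with $\cM$ held fixed) leaves the underlying module $V=(K_S/\cM_S)^n$ unchanged and only alters the bases of open neighborhoods at the infinite primes, via $\bU_{i,v}=(t^{-i}\cM_v^\infty)^n$. Any two $\infty$-taming modules are commensurable as $\cO_{F,\infty}[G]$-modules, so the two resulting bases satisfy the mutual nesting hypothesis of Proposition~\ref{P:DeterminantFacts}(3), which immediately gives equality of the nuclear determinants.

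The variation of $\cM$ alone (with $\cM^\infty$ held fixed) is the harder half, since it genuinely changes the underlying module. Given $\cM_1,\cM_2$, I would first produce a common refinement $\cM_0\subseteq \cM_1\cap \cM_2$ which is still a taming module; this can be achieved by multiplying (say) $\cM_1$ by a suitable element of $\cO_F$ chosen to vanish to sufficiently high order at the finitely many finite primes where $\cM_1$ and $\cM_2$ disagree, with the taming conditions of Proposition~\ref{P:taming-module} preserved via projectivity and the local free-rank-one structure at each prime. For each of $j=1,2$, the inclusion $\cM_0\subseteq \cM_j$ then yields a short exact sequence of compact $R[G]$-modules
\[
0\longrightarrow (\cM_{j,S}/\cM_{0,S})^n\longrightarrow (K_S/\cM_{0,S})^n\longrightarrow (K_S/\cM_{j,S})^n\longrightarrow 0,
\]
with $G$-cohomologically trivial terms (by Proposition~\ref{P:taming-module}(1)) and compatible nuclear $\Phi$-actions (Proposition~\ref{P:TauEndomorphisms}(2)). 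Applying the exact-sequence property of Proposition~\ref{P:DeterminantFacts}(2) reduces matters to showing that the ``difference'' determinant $\det_{R[[Z]]}(1+\Phi\,|\,(\cM_{j,S}/\cM_{0,S})^n)$ equals $1$.

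The main obstacle is this last vanishing. The quotient $\cM_{j,S}/\cM_{0,S}$ is finite and supported at the finitely many primes $v\in S\setminus S_\infty$ of disagreement, with local structure controlled by Proposition~\ref{P:taming-module}(3)-(5) (by choosing $\cM_0$ suitably one can arrange each local quotient to look like $(\cO_v/\fm_v^{k_v})[G]^{n_v}$). The decisive input is that every coefficient of $\Phi\in M_n(\cO_{F,S})\{\tau\}[[Z]]\tau$ carries at least one factor of $\tau$, and $\tau$ strictly increases $v$-adic valuations by a factor of $q$ (sending $\fm_v^i$ into $\fm_v^{qi}$). I would filter $\cM_{j,v}/\cM_{0,v}$ by its $\fm_v$-adic filtration and apply Proposition~\ref{P:DeterminantFacts}(2) along this filtration to factor the determinant into a product over graded pieces. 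On each positive-index graded piece the coefficients of $\Phi$ are strictly contracting modulo higher filtration, so the local factor collapses to $1$ by nilpotence. The delicate step will be the residue-field piece $\cM_{j,v}/\fm_v\cM_{j,v}$, on which Frobenius acts as a non-trivial automorphism rather than nilpotently; here the cancellation must come from a telescoping application of the multiplicativity in Proposition~\ref{P:DeterminantFacts}(1) across neighboring graded levels, and bootstrapping this cancellation is what I expect to be the most technical point of the argument.
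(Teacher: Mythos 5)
Your first half (varying $\cM^\infty$ with $\cM$ fixed) is correct and is essentially the whole content of the paper's argument: the taming pair enters only through the basis of open neighborhoods of Definition \ref{D:UNeighborhoods}, the underlying module $(K_S/\cM_S)^n$ being untouched, and commensurability of the lattices together with Proposition \ref{P:DeterminantFacts}(3) finishes the job. Note that the same commensurability argument is what is needed at the \emph{finite} places $v\in S\setminus S_\infty$, where the neighborhoods $(\fm_v^i\cM_v)^n$ also depend on the choice of taming module; your plan routes that case through the ``hard half'' instead, which is where the trouble starts.

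Your second half has a genuine gap, and it is exactly at the point you flag: the residue-field graded piece. The determinant $\det_{R[[Z]]}(1+\Phi\mid(\cM_{j,v}/\fm_v\cM_{j,v})^n)$ is \emph{not} $1$ and, more importantly, it genuinely depends on the lattice $\cM_{j,v}$ at a wildly ramified prime $v$; no telescoping of Proposition \ref{P:DeterminantFacts}(1) can cancel it. To see that this is not a technicality but an obstruction, combine the Trace Formula \ref{T:TraceFormula} with Lemma \ref{L:supertaming}: if $\cM'=\xi\cM$ with $\xi$ vanishing at a wild prime $v\notin S$, then $\tau\equiv 0$ on $\cM'/v\cM'$ while $\tau$ acts nontrivially on $\cM/v\cM$, so $\det_{R[[Z]]}(1+\Phi\mid(\cM/v\cM)^n)\neq\det_{R[[Z]]}(1+\Phi\mid(\cM'/v\cM')^n)$ in general (this is precisely why $|E(\cM/v)|_G$ and hence $\Theta_{K/F}^{E,\cM}(0)$ depend on $\cM$ in \S\ref{incomplete-L-section}). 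Consequently, when the underlying module $\cM_S$ itself changes --- i.e.\ when $\cM_1$ and $\cM_2$ differ at a wild prime \emph{outside} $S$ --- the two nuclear determinants really are different, and no proof along your lines (or any other) can exist. The lemma must be read, as it is used in the proof of Lemma \ref{L:LocalizationLemma}, as asserting independence of the determinant from the taming pair \emph{used to build the neighborhoods} $\cU$, the module $(K_S/\cM_S)^n$ being fixed; with that reading your ``hard half'' disappears and the entire proof is the easy commensurability argument. Your filtration observation that the positive-index graded pieces $\fm_v^i\cM_v/\fm_v^{i+1}\cM_v$ ($i\geq 1$) contribute trivially because $\tau$ raises valuations is correct --- it is the same device used at the end of the proof of Lemma \ref{L:LocalizationLemma} --- but it cannot be bootstrapped past level $0$.
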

\begin{proof}
This is the $n$--dimensional analogue of Lemma 2.3.10 of \cite{FGHP20} and the proof is identical.
\end{proof}

\begin{lemma}\label{L:LocalizationLemma} Let $\mathcal{M}$ be a taming module for $K/F$, let $S$ be a finite set of primes of $F$ containing $S_\infty$, let $v\in {\rm MSpec}(\cO_F)\setminus S$, and let $S' := S\cup\{v\}$. Then, for any operator $\Phi\in\Mat_n(\cO_{F,S})\{\tau\}[[Z]]\tau Z$, we have
$$\det\nolimits_{R[[Z]]}(1 + \Phi\rvert (\mathcal{M}/v\mathcal{M})^n) = \frac{\det\nolimits_{R[[Z]]}(1 + \Phi\rvert (K_{S'}/\mathcal{M}_{S'})^n)}{\det\nolimits_{R[[Z]]}(1 + \Phi\rvert (K_S/\mathcal{M}_S)^n)}.
$$ \end{lemma}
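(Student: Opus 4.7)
The plan is to deduce the identity from the multiplicativity of nuclear determinants (Proposition \ref{P:DeterminantFacts}(2)) applied to a short exact sequence of topological $R$--modules
\[0\longrightarrow \mathcal{M}_v^n\longrightarrow (K_{S'}/\mathcal{M}_{S'})^n\longrightarrow (K_S/\mathcal{M}_S)^n\longrightarrow 0,\]
where $\mathcal{M}_v$ denotes the $v$--adic completion of $\mathcal{M}$, and then to collapse the middle determinant $\det(1+\Phi\,|\,\mathcal{M}_v^n)$ down to $\det(1+\Phi\,|\,(\mathcal{M}/v\mathcal{M})^n)$ by exhibiting $\pi_v\mathcal{M}_v^n$ as a common nucleus for the coefficients of $\Phi$.

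The geometric input for this sequence is the identification
\[K_{S'}/\mathcal{M}_{S'}\ \cong\ (K_S\oplus\mathcal{M}_v)/\mathrm{diag}(\mathcal{M}_S),\]
established inside the decomposition $K_{S'}=K_S\oplus K_v$ by verifying: (a) $K_{S'}=(K_S\oplus\mathcal{M}_v)+\mathcal{M}_{S'}$, because $\mathcal{M}_{S'}=\mathcal{M}_S[1/\pi_v]$ is dense in $K_v$ (as $\mathcal{M}_S$ is dense in $\mathcal{M}_v$) and hence surjects onto the discrete quotient $K_v/\mathcal{M}_v$, so every $(a,b)\in K_{S'}$ can be corrected by a diagonally embedded element of $\mathcal{M}_{S'}$ to place its $K_v$--component inside $\mathcal{M}_v$; and (b) $(K_S\oplus\mathcal{M}_v)\cap\mathcal{M}_{S'}=\mathcal{M}_S$, which (via the injectivity of $K\hookrightarrow K_S$) reduces to the evident identity $\mathcal{M}_{S'}\cap\mathcal{M}_v=\mathcal{M}_S$ in $K$. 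The natural projection $(K_S\oplus\mathcal{M}_v)/\mathrm{diag}(\mathcal{M}_S)\twoheadrightarrow K_S/\mathcal{M}_S$, $(a,b)\mapsto a\bmod\mathcal{M}_S$, has kernel $\mathcal{M}_v$ (embedded as $\{(0,b):b\in\mathcal{M}_v\}$); raising to $n$--th Cartesian powers then yields the displayed exact sequence.

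I would next check the hypotheses of Proposition \ref{P:DeterminantFacts}(2): the submodule $\mathcal{M}_v^n$ is closed in $(K_{S'}/\mathcal{M}_{S'})^n$; it is $G$--c.t., since $\mathcal{M}_v$ is $\mathcal{O}_v[G]$--projective by Proposition \ref{P:taming-module}(5); the induced bases of open neighborhoods on both the sub and the quotient module are $G$--c.t., where on $\mathcal{M}_v^n$ one invokes Proposition \ref{P:DeterminantFacts}(3) to replace the inherited basis by the natural $v$--adic basis $\{\pi_v^i\mathcal{M}_v^n\}_{i\geq 0}$; and $\Phi$ preserves $\mathcal{M}_v^n$, because its matrix coefficients lie in $M_n(\mathcal{O}_{F,S})\subseteq M_n(\mathcal{O}_v)$ and $\tau$ stabilizes $\mathcal{M}_v^n$. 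Proposition \ref{P:DeterminantFacts}(2) then gives
\[\det\nolimits_{R[[Z]]}\bigl(1+\Phi\,\big|\,(K_{S'}/\mathcal{M}_{S'})^n\bigr)=\det\nolimits_{R[[Z]]}\bigl(1+\Phi\,\big|\,\mathcal{M}_v^n\bigr)\cdot\det\nolimits_{R[[Z]]}\bigl(1+\Phi\,\big|\,(K_S/\mathcal{M}_S)^n\bigr).\]

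Finally, the hypothesis $\Phi\in M_n(\mathcal{O}_{F,S})\{\tau\}[[Z]]\tau Z$ implies that every coefficient $\phi_n$ of $\Phi$ (for $n\geq 1$) contains a factor of $\tau$, so it maps $\pi_v^i\mathcal{M}_v^n$ into $\pi_v^{iq}\mathcal{M}_v^n\subseteq \pi_v^{i+1}\mathcal{M}_v^n$ for every $i\geq 1$ (using $q\geq 2$). Hence $U:=\pi_v\mathcal{M}_v^n$ is a common nucleus, and by definition of the nuclear determinant,
\[\det\nolimits_{R[[Z]]}\bigl(1+\Phi\,\big|\,\mathcal{M}_v^n\bigr)=\det\nolimits_{R[[Z]]}\bigl(1+\Phi\,\big|\,\mathcal{M}_v^n/\pi_v\mathcal{M}_v^n\bigr)=\det\nolimits_{R[[Z]]}\bigl(1+\Phi\,\big|\,(\mathcal{M}/v\mathcal{M})^n\bigr).\]
Dividing the previous display by $\det_{R[[Z]]}(1+\Phi\,|\,(K_S/\mathcal{M}_S)^n)$ and substituting yields the lemma.

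I expect the main obstacle to be the bookkeeping of the second step---verifying that the basis inherited by $\mathcal{M}_v^n$ from the ambient neighborhoods $\bU_{i,S'}$ on $(K_{S'}/\mathcal{M}_{S'})^n$ is cofinal with the natural $v$--adic basis $\{\pi_v^i\mathcal{M}_v^n\}$, so that Proposition \ref{P:DeterminantFacts}(3) legitimately exchanges the two and the contraction argument of the final step carries through verbatim.
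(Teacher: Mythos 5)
Your proposal is correct and follows essentially the same route as the paper, which cites the same three steps from \cite[Lemma 3.0.1]{FGHP20}: the short exact sequence $0\to(\cM_v)^n\to(K_{S'}/\cM_{S'})^n\to(K_S/\cM_S)^n\to 0$, multiplicativity of nuclear determinants along it, and the identification $\det_{R[[Z]]}(1+\Phi\,|\,(\cM_v)^n)=\det_{R[[Z]]}(1+\Phi\,|\,(\cM/v\cM)^n)$ via the nucleus $(\fm_v\cM_v)^n$. Your write-up simply supplies the details the paper defers to loc.\ cit., and they check out.
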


\begin{proof}
The proof of this lemma follows the proof of \cite[Lemma 3.0.1]{FGHP20} very closely, with minor modifications.  Begin with the following exact sequence of compact, $G$-c.t. $\F_q[G]$-modules
\[0\longrightarrow (\cM_v)^n\overset{\psi^n}\longrightarrow\left (\frac{K_{S'}}{\cM_{S'}}\right )^n\overset{\eta^n}\longrightarrow\left (\frac{K_S}{\cM_S}\right )^n\longrightarrow 0,\]
where $\psi:\cM_v\to\frac{K_{S'}}{\cM_{S'}}$ and $\eta: \frac{K_{S'}}{\cM_{S'}}\to \frac{K_{S}}{\cM_{S}}$ are defined in loc.cit. Just as in loc.cit., one constructs bases of open neighborhoods $\mathfrak U'$ and $\mathfrak U$ for
$\left(\frac{K_{S'}}{\cM_{S'}}\right)^n$ and $\left(\frac{K_{S}}{\cM_{S}}\right)^n$, respectively, out of a taming pair $(\cM, \mathcal W^\infty)$. As in loc.cit, it is easy to check that $\mathfrak U'$ and $\mathfrak U$ satisfy the properties
in Proposition \ref{P:DeterminantFacts}(2) and conclude that
\[\det\nolimits_{R[[Z]]}\left (1 + \Phi\bigg\rvert \left (\frac{K_{S'}}{\mathcal{M}_{S'}}\right )^n\right ) = \det\nolimits_{R[[Z]]}(1 + \Phi\rvert (\mathcal{M}_v)^n)\cdot\det\nolimits_{R[[Z]]}\left (1 + \Phi\bigg\rvert \left (\frac{K_S}{\mathcal{M}_S}\right )^n\right ).\]
Finally, as in loc.cit., we note that
\[\det\nolimits_{R[[Z]]}(1 + \Phi\rvert (\mathcal{M}_v)^n) = \det\nolimits_{R[[Z]]}(1 + \Phi\rvert (\mathcal{M}/v\mathcal{M})^n),\]
which concludes the proof.
\end{proof}

\begin{theorem}[Trace Formula]\label{T:TraceFormula}
Let $\mathcal{M}$ be a taming module for $K/F$, let $S$ be a finite set of primes of $F$ containing $S_\infty$ and let $\Phi \in M_n(\cO_{F,S})\{\tau\}[[Z]] \tau Z$.
\[\prod_{v\in{\rm MSpec}(\cO_{F,S})}\det\nolimits_{R[[Z]]}\left (1 + \Phi\rvert \left (\mathcal{M}/v\mathcal{M}\right )^n\right ) = \det\nolimits_{R[[Z]]}\left (1 + \Phi\rvert \left (K_S/\mathcal{M}_S\right )^n\right )^{-1}.\]
\end{theorem}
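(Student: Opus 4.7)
The strategy is to upgrade the local identity of Lemma \ref{L:LocalizationLemma} to the infinite-product Trace Formula through a telescoping argument over finite subsets of primes, followed by a limiting procedure in $R[[Z]]$.

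First, I would fix an enumeration $v_1, v_2, \ldots$ of the primes in ${\rm MSpec}(\cO_{F,S})$, and for a finite initial segment $T = \{v_1, \ldots, v_m\}$, set $S_j := S \cup \{v_1, \ldots, v_j\}$ for $0 \le j \le m$. Applying Lemma \ref{L:LocalizationLemma} inductively to each step $S_{j-1} \subset S_j$ yields the telescoping identity
\[
\prod_{v \in T} \det\nolimits_{R[[Z]]}(1 + \Phi \mid (\cM/v\cM)^n) = \frac{\det\nolimits_{R[[Z]]}(1 + \Phi \mid (K_{S \cup T}/\cM_{S \cup T})^n)}{\det\nolimits_{R[[Z]]}(1 + \Phi \mid (K_S/\cM_S)^n)}.
\]
Thus the Trace Formula will follow once one shows that the numerator on the right converges to $1$ in $R[[Z]]$ as $T$ exhausts ${\rm MSpec}(\cO_{F,S})$.

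Second, I would verify this convergence modulo $Z^N$ for each fixed $N \ge 1$. Because $\Phi \in M_n(\cO_{F,S})\{\tau\}[[Z]]\tau Z$ has no $Z^0$-term, only the truncation $\Phi_{<N} = \sum_{m=1}^{N-1}\phi_m Z^m$ matters mod $Z^N$, and each $\phi_m$ is a finite $\tau$-polynomial whose matrix coefficients lie in $M_n(\cO_{F,S})$ and hence have nonnegative valuation at every prime $v \notin S$. By Proposition \ref{P:TauEndomorphisms}(1), the operators $\phi_1,\ldots,\phi_{N-1}$ share a common nucleus $\bU_{i^*, S \cup T}$ whose depth $i^*$ depends only on $N$ and $S$, \emph{not} on $T$. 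Consequently, the nuclear determinant mod $Z^N$ reduces to an ordinary $R[[Z]]/Z^N$-determinant on the finite quotient $(K_{S \cup T}/\cM_{S \cup T})^n / \bU_{i^*, S \cup T}$.

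The core technical step—and the main obstacle—is to show that this finite-dimensional determinant stabilizes to $1$ modulo $Z^N$ once $T$ is sufficiently large. My plan is to combine the multiplicativity of nuclear determinants in short exact sequences (Proposition \ref{P:DeterminantFacts}(2)), applied to the filtration $0 \to \cM_v^n \to (K_{S \cup \{v\}}/\cM_{S \cup \{v\}})^n \to (K_S/\cM_S)^n \to 0$ extracted from the proof of Lemma \ref{L:LocalizationLemma}, with the basis-independence principle of Proposition \ref{P:DeterminantFacts}(3), to track how the nuclei and determinants transform as $T$ grows. The delicate point is to verify that the cumulative contribution of the newly added primes telescopes to $1$ modulo $Z^N$; this closely mirrors the proof of the $n=1$ trace formula in \cite{FGHP20}, now adapted to the $n$-dimensional modules $(\cM/v\cM)^n$ and $(K_S/\cM_S)^n$ with the uniformity in the common nucleus above playing the role of its one-dimensional analogue. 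Once this convergence is established, passing to the limit in the telescoping identity yields the Trace Formula.
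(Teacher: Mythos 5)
Your reduction is sound and matches the paper's opening move: the telescoping application of Lemma \ref{L:LocalizationLemma} over finite sets $T$ is exactly how the proof begins, and it is correct that everything comes down to showing $\det_{R[[Z]]/Z^N}(1+\Phi\mid (K_{S\cup T}/\cM_{S\cup T})^n)=1$ once $T$ is large enough (depending on $N$). But the proposal stops precisely where the theorem's actual content begins, and the tools you nominate for that step are not the ones that make it work. Multiplicativity along exact sequences and independence of the basis of neighborhoods are bookkeeping devices; they cannot by themselves force a determinant to be $1$. The mechanism is Anderson's trick, and it dictates \emph{which} finite set $T$ to take: writing $\Phi=\sum_m\phi_m Z^m$, one chooses $D=D(N)$ with $\deg_\tau\phi_m<\tfrac{mD}{N}$ for $m<N$ and takes $T$ to consist of $S$ together with \emph{all primes of residue degree $<D$ over $\F_q$} (not an arbitrary initial segment of an enumeration). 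One then decomposes $1+\Phi$ modulo $Z^N$ into a product of elementary factors $1+M\tau^d Z^k$ with $d<D$ — a step requiring extra care here because the matrices $M\in M_n(\cO_{F,S})$ do not commute — and uses the identity $1=\sum_j f_{dj}(a_{dj}^{q^d}-a_{dj})$ in $\cO_{F,T}$ (valid because $\cO_{F,T}$ has no residue fields of degree $<D$) to rewrite each elementary factor as a product of quotients $\bigl(1-(R\tau^d)aZ^k\bigr)\bigl(1-a(R\tau^d)Z^k\bigr)^{-1}$ with $a\in\cO_{F,T}$ scalar. Each such quotient has nuclear determinant $1$ on $(K_T/\cM_T)^n$ and on every $(\cM/v\cM)^n$ by the commutation property of Proposition \ref{P:TauEndomorphisms}(3). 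This simultaneously shows that the determinant over $(K_T/\cM_T)^n$ and each remaining local factor for $v\notin T$ equal $1$ modulo $Z^N$, which is what makes your limit argument close.

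Without the residue-degree constraint on $T$ and the resulting factorization, there is no reason for the determinant over $(K_{S\cup T}/\cM_{S\cup T})^n$ to tend to $1$: that module only grows with $T$, and nothing in the exact-sequence or basis-independence machinery produces the cancellation. So the proposal as written has a genuine gap at its self-identified ``core technical step''; the missing idea is the combination of the $\tau$-degree bound $\deg_\tau\phi_m<\tfrac{mD}{N}$, the choice $T=T_{D(N)}$, Anderson's partition of unity, and Proposition \ref{P:TauEndomorphisms}(3), together with the noncommutative adaptation of the elementary-factor decomposition.
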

\begin{proof}
This proof is similar to that of \cite[Thm. 3.0.2]{FGHP20}, although in the higher dimensional case we need to be careful, as the elements of $M_n(\cO_{F,S})$ don't commute with each other, in general.
As in \cite{FGHP20}, we prove the equality above $\mod Z^N$ (in $R[[Z]]/Z^N$), for all $N\in\Bbb Z_{\geq 0}$.

Write $\Phi = \sum_{m = 1}^\infty\phi_m Z^m$ with $\phi_m\in M_n(\OX_{F,S})\{\tau\}\tau$.  Let $N\in\Bbb Z_{>0}$ and let $D = D(N)$ be such that we have ${\rm deg}_\tau\phi_m<\frac{mD}{N}$, for all $m<N$. Let
\[T := T_{D(N)} := S\cup\{v\in\text{MSpec}(\cO_{F,S}) \mid  [\OX_{F,S}/v:\F_q]< D\}.\]
By Lemma \ref{L:LocalizationLemma} applied $|T\setminus S|$ times, we have
\[\prod_{v\in T\setminus S}\det\nolimits_{R[[Z]]}(1 + \Phi\rvert \left (\mathcal{M}/v\mathcal{M}\right )^n) =\frac{\det\nolimits_{R[[Z]]}(1 + \Phi\rvert (K_{T}/\mathcal{M}_{T})^n)}{\det\nolimits_{R[[Z]]}(1 + \Phi\rvert (K_{S}/\mathcal{M}_{S})^n)} .\]
Therefore, it suffices to show that
\[\prod_{v\in\text{MSpec}(\cO_{F,T})}\det\nolimits_{R[[Z]]/Z^N}(1 + \Phi\rvert \left (\mathcal{M}/v\mathcal{M}\right )^n)=\det\nolimits_{R[[Z]]/Z^N}(1 + \Phi\rvert \left (K_T/\mathcal{M}_T\right)^n )^{-1}.\]
In fact, we will show that both sides in the equality above are equal to $1$ in $R[[Z]]/Z^N$.

Following Taelman \cite[Thm. 3]{T12}, we let $\mathcal{S}_{D,N}\subseteq M_n(\OX_{F,T})\{\tau\}[[Z]]/Z^N$ be the set
$$\mathcal{S}_{D,N} = \left\{1 + \sum_{m = 1}^{N-1}\psi_mZ^m \bigg\vert  \deg_{\tau}(\psi_m) < \frac{mD}{N}, \text{ for all } m<N\right\}.$$
The set $\mathcal{S}_{D,N}$ is a group under multiplication, and $(1 + \Phi)\bmod Z^N\in\mathcal{S}_{D,N}$.  Observe that we can inductively decompose the operator $(1+\Phi)$ into a product of elementary terms mod $Z^N$.  Since the coefficients of $\Phi$ no longer commute as they did in \cite{T12} we briefly illustrate how this argument works.  Suppose we write
\[1+\Phi =1+ (P_{1,0} + P_{1,1}\tau + \dots + P_{1,\ell} \tau^\ell) Z + (P_{2,0} + P_{2,1}\tau + \dots + P_{2,m}\tau^m) Z^2 + \dots,\quad P_{i,j}\in M_n(\cO_{F,T}).\]
Then we find that mod $Z^2$, this expression for $1+\Phi$ agrees with
\begin{equation}\label{E:Zterms}
(1+P_{1,0} Z)(1+P_{1,1}\tau Z)(1+P_{1,2}\tau^2 Z) \dots (1+P_{1,\ell} \tau^\ell Z).
\end{equation}
We then include terms in the above product expansion which cancel out the $Z^2$ terms from \eqref{E:Zterms}, namely
\begin{equation}\label{E:Z^2cancelled}
(1-P_{1,0}P_{1,1}\tau Z^2)(1-P_{1,0}P_{1,2}\tau^2 Z^2) + \dots (1-P_{1,\ell-1}P_{1,\ell}\tau^{2\ell-1} Z^2)
\end{equation}
and include terms which give the correct coefficients for the $Z^2$ terms
\begin{equation}\label{E:Z^2terms}
(1+P_{2,0} Z^2)(1+P_{2,1}\tau Z^2)\dots (1+P_{2,m}).
\end{equation}
Putting this altogether we see that $1+\Phi$ agrees with the product of the three expressions \eqref{E:Zterms}-\eqref{E:Z^2terms} mod $Z^3$.  Then, we continue this process inductively so that we get an equality of the form
\[1+\Phi = \prod_{i,j,k} (1+M_{i}\tau^j Z^k) \bmod Z^N,\quad M_i \in M_n(\cO_{F,T}).\]

Now we use a trick of Anderson (\cite[Prop 9]{Anderson00}). Since $\OX_{F,T}$ has no residue fields of degree $d<D$ over $\Fq$, for every $d<D$ there exists $f_{dj}, a_{dj}\in\OX_{F,T}$, with $1\leq j\leq M_d$, such that
\[1 = \sum_{j = 1}^{M_d} f_{dj}(a_{dj}^{q^d} - a_{dj}).\]

Then for every $R\in M_n(\OX_{F,T})$, and every $n<N$ and $d<D$, we have
\[1 - R\tau^dZ^n\equiv \prod_{j = 1}^{M_d}\frac{1 - (Rf_{dj}\tau^d)a_{dj}Z^n}{1 - a_{dj}(Rf_{dj}\tau^d)Z^n}\bmod Z^{n+1},\]
since the constants $f_{dj}, a_{dj}$ commute with the matrix $R$.  Thus every such operator $1+\Phi$ may be decomposed mod $Z^N$ into a finite product with terms of the form
\[\frac{1 - (R\tau^d)aZ^n}{1 - a(R\tau^d)Z^n},\quad R\in M_n(\OX_{F,T}), a \in \OX_{F,T}.\]
Then, by properties of finite determinants together with Proposition \ref{P:TauEndomorphisms}(3) we get
\[\det\nolimits_{R[[Z]]/Z^N}\left(\frac{1 - (R\tau^d)aZ^n}{1 - a(R\tau^d)Z^n}\bigg\vert (\mathcal{M}/v\mathcal{M})^n\right) = \det\nolimits_{R[[Z]]/Z^N}\left(\frac{1 - (R\tau^d)aZ^n}{1 - a(R\tau^d)Z^n}\bigg\rvert (K_T/\mathcal{M}_T)^n\right) = 1,\]
for all $v\in\text{MSpec}(\cO_{F,T})$ which finishes the proof.
\end{proof}

\begin{corollary} \label{C:CorToTraceFormula} Let $\mathcal{M}$ be a taming module for $K/F$. Let $E$ be a $t$-module with structural morphism $\phi_E:\Fq[t]\to M_n(\cO_F\{\tau\})$. Then
$$\Phi = \frac{1 - \phi_E(t)T^{-1}}{1 - d_E[t]T^{-1}} - 1:= (1 - \phi_E(t)T^{-1})\cdot(1 - d_E[t]T^{-1})^{-1}-1\in M_n(\cO_{F})\{\tau\}[[T^{-1}]] $$
is a nuclear operator on $(K_\infty/\mathcal{M})^n[[T^{-1}]]$ and we have
\[\Theta_{K/F}^{E,\mathcal{M}}(0) = \det\nolimits_{R[[T^{-1}]]}(1 + \Phi\mid (K_\infty/\mathcal{M})^n)\rvert_{T = t}.\]
\end{corollary}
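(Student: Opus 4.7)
The plan is to verify $\Phi$ is nuclear via its $T^{-1}$-expansion, then apply the Trace Formula (Theorem \ref{T:TraceFormula}) to reduce the global identity to prime-by-prime local computations at each finite prime of $\cO_F$.

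For nuclearity, one expands the geometric series in $M_n(\cO_F)\{\tau\}[[T^{-1}]]$ to obtain
\[\Phi = -(\phi_E(t) - d_E[t])\,T^{-1}(1 - d_E[t]T^{-1})^{-1} = -\sum_{k\geq 1}(\phi_E(t) - d_E[t])(d_E[t])^{k-1}\,T^{-k}.\]
The crucial observation is that $\phi_E(t) - d_E[t] = M_1\tau + \dots + M_\ell\tau^\ell$ has no $\tau^0$ component; hence each $T^{-k}$ coefficient of $\Phi$ lies in $M_n(\cO_F)\{\tau\}\tau$. Taking $S = S_\infty$ (so that $\cO_{F,S_\infty} = \cO_F$) and $Z = T^{-1}$, this places $\Phi$ in $M_n(\cO_{F,S_\infty})\{\tau\}[[Z]]\tau Z$. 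Nuclearity on $(K_\infty/\mathcal M)^n[[T^{-1}]]$ then follows from Proposition \ref{P:TauEndomorphisms}(2), and the hypothesis of Theorem \ref{T:TraceFormula} is satisfied.

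For each finite prime $v\in\MSpec(\cO_F)$, Proposition \ref{P:taming-module}(3) gives $\mathcal M/v\mathcal M \simeq R^{n_v}$ with $n_v := [\cO_F/v:\F_q]$, so $(\mathcal M/v\mathcal M)^n$ is $R$-free of rank $n\cdot n_v$. On this finite module nuclear determinants coincide with ordinary determinants, and multiplicativity yields
\[\det_{R[[T^{-1}]]}(1+\Phi \mid (\mathcal M/v\mathcal M)^n) = \frac{\det(1-\phi_E(t)T^{-1} \mid (\mathcal M/v\mathcal M)^n)}{\det(1-d_E[t]T^{-1} \mid (\mathcal M/v\mathcal M)^n)}.\]
Since $E(\mathcal M/v)$ and $\Lie_E(\mathcal M/v)$ are both realized on this same $R$-free module with $A$-action $t \mapsto \phi_E(t)$ and $t \mapsto d_E[t]$ respectively, Remark \ref{R:finite-det} converts numerator and denominator, upon $T = t$, into $t^{-n\cdot n_v}|E(\mathcal M/v)|_G$ and $t^{-n\cdot n_v}|\Lie_E(\mathcal M/v)|_G$; the $t^{\pm n\cdot n_v}$ factors cancel, leaving
\[\det(1+\Phi \mid (\mathcal M/v\mathcal M)^n)\big|_{T = t} = \frac{|E(\mathcal M/v)|_G}{|\Lie_E(\mathcal M/v)|_G}.\]

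Finally, Theorem \ref{T:TraceFormula} applied with $S = S_\infty$ (so that $\MSpec(\cO_{F,S_\infty}) = \MSpec(\cO_F)$ and $K_{S_\infty}/\mathcal M_{S_\infty} = K_\infty/\mathcal M$) gives
\[\det(1+\Phi \mid (K_\infty/\mathcal M)^n)^{-1} = \prod_{v \in \MSpec(\cO_F)} \det(1+\Phi \mid (\mathcal M/v\mathcal M)^n).\]
Inverting and specializing $T = t$, the local computation above yields exactly $\prod_v |\Lie_E(\mathcal M/v)|_G / |E(\mathcal M/v)|_G = \Theta_{K/F}^{E,\mathcal M}(0)$ by Definition \ref{L-value-definition}. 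The main technical subtlety is in the first step: one must verify that the geometric series expansion lands $\Phi$ in the correct class of nuclear operators, which hinges on the fact that the nilpotent-plus-scalar $\tau^0$ component of $\phi_E(t)$ is absorbed entirely into $d_E[t]$, making $\phi_E(t) - d_E[t]$ a pure positive-degree $\tau$-polynomial.
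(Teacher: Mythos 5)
Your proposal is correct and follows essentially the same route as the paper's own proof: expand $\Phi$ as $\sum_{k\geq 1}(d_E[t]-\phi_E(t))d_E[t]^{k-1}T^{-k}$, note the coefficients lie in $M_n(\cO_F)\{\tau\}\tau$ so that Proposition \ref{P:TauEndomorphisms}(2) gives nuclearity, identify the local factors via Remark \ref{R:finite-det}, and apply the Trace Formula with $S=S_\infty$. The only cosmetic difference is that you spell out the cancellation of the $t^{\pm n\cdot n_v}$ normalizing factors, which the paper leaves implicit.
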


\begin{proof}
By Remark \ref{R:finite-det} applied to $V:=\left(\cM/v\right)^n$, we have an equality
\[ \Theta_{K/F}^{E,\mathcal{M}}(0) = \prod_{v}\frac{|\Lie_E(\cM/v)|_G}{|E(\cM/v)|_G}=\prod_{v} \frac{\det\nolimits_{R[[T^{-1}]]}(1 - d_E[t]T^{-1}\mid (\cM/v)^n)\rvert_{T = t}}{\det\nolimits_{R[[T^{-1}]]}(1 - \phi_E(t)T^{-1}\mid (\cM/v)^n)\rvert_{T = t}}.\]
Since

\[\Phi = \sum_{n = 1}^\infty (d_E[t] - \phi_E(t))d_E[t]^{n-1}T^{-n}\in M_n(\cO_{F})\{\tau\}[[T^{-1}]]\tau T^{-1},\]
by Proposition \ref{P:TauEndomorphisms}(2), $\Phi$ is a nuclear operator on $(K_\infty/\cM)^n$ and on $(\cM/v)^n$, for all $v$.  Applying the Trace Formula \ref{T:TraceFormula} for $S = S_\infty$ gives
\[\Theta_{K/F}^{E,\mathcal{M}}(0) =\prod_{v}\det\nolimits_{R[[T^{-1}]]}(1 + \Phi\mid(\cM/v)^n)^{-1}\rvert_{T = t} = \det\nolimits_{R[[T^{-1}]]}(1 + \Phi\mid (K_\infty/\cM)^n)\rvert_{T = t}.\]
\end{proof}

\section{Volume Formula}\label{S:Volume}

In this section, we prove a theorem linking certain nuclear determinants to volumes of objects in the Arakelov class $\mathcal C$. When combined with Corollary \ref{C:CorToTraceFormula}, this theorem provides the essential bridge between special values of $L$--functions and volumes of objects in the Arakelov class $\mathcal C$. This section is the generalization to $t$--modules of \S5 in \cite{FGHP20}.
\medskip

Let $M_1,M_2\in \cC$. Assume that their structural exact sequences are as follows
\begin{equation}\label{E:StructuralSequenceM_i}
0\to \Lie_E(K_\infty)/\Lambda_s\overset{\iota_s}\longrightarrow M_s\overset{\pi_s}\longrightarrow H_s\to 0,
\end{equation}
where $\Lambda_s$ is an $A[G]$--lattice in $\Lie_E(K_\infty)$, for $s=1,2$.
Fix $\ell>0$ sufficiently large so that $t^{-i}\cO_{K_\infty}^n\cap\Lambda_s=\{0\}$, for all $i\geq\ell$ and $s=1, 2$ and identify $t^{-i}\cO_{K_\infty}^n$ with its image in $\Lie_E(K_\infty)/\Lambda_s$, for all $i\geq \ell$. Fix an $\infty$--taming module $\mathcal W^\infty$ for $K/F$. With notations as above, we use Definition \ref{D:UNeighborhoods}(2) to construct open neighborhoods of $0$
in $M_s$, for all $s=1,2$. We denote these by $\cU = \{\bU_{i,\infty}\}_{i\geq \ell}$. By the construction above, there exists $a\in\Z_{>0}$, which we fix once and for all, such that
\begin{equation}\label{E:U-inclusions}
 t^{-a-i}\cO_{K_\infty}^n\subseteq \bU_{i, \infty}\subseteq t^{-i}\cO_{K_\infty}^n, \qquad\text{ for all }i\geq\ell.
\end{equation}

We need to understand how the $d_E[t]$-action on $\Lie_E(K_\infty)$ interacts with the chosen open neighborhoods $\bU_{i,\infty}$. We remind the reader that $d_E[t]=tI_n+N$, for some nilpotent matrix $N$. The extra degree contributions from $N$ cause most of the deviations from the proofs presented in \S5 of loc.cit.

\begin{definition}
We equip $K_\infty$ with the sup norm, normalized such that $\lVert t\rVert=q$,  then extend this to $\Mat_{r\times k}(K_\infty)$ by taking the max of the norms of the entries of a matrix. Abusively, we denote this extended norm by $\lVert \cdot \rVert$. \end{definition}
Note that in this norm, for $D, D' \in \Mat_{r\times k}(K_\infty)$ and $\bv \in K_\infty^k$ we have
\[\lVert D\bv\rVert \leq \lVert D \rVert \cdot \lVert \bv \rVert, \qquad \lVert D+D'\rVert\leq {\rm max}\{\lVert D\rVert, \lVert D'\rVert\},  \]
and also $\lVert D\cdot D'\rVert\leq \lVert D\rVert\cdot\lVert D'\rVert$ if $r=k$.

\begin{lemma}\label{L:Containments}
There exists a constant $C \in \Z_{\geq 0}$ such that
\begin{enumerate}
\item For all $m\in \Z$, we have
\[q^m = \lVert t^m\rVert \leq \lVert d_E[t]^m\rVert \leq \lVert t^m\rVert\cdot q^C = q^{m+C}.\]
\item For all $m\in \Z$, we have
\[t^{m-C}\cO_{K_\infty}^n\subset d_E[t]^m\cO_{K_\infty}^n\subset t^{m+C}\cO_{K_\infty}^n.\]
\item For all $i\geq \ell$, we have
\[t^{-a-i-2C} \cO_{K_\infty}^n \subset d_E[t]^{-a-i-C} \cO_{K_\infty}^n \subset t^{-a-i} \cO_{K_\infty}^n \subset \bU_{i,\infty} \subset t^{-i} \cO_{K_\infty}^n \subset d_E[t]^{-i+C}\cO_{K_\infty}^n.\]
\end{enumerate}

\end{lemma}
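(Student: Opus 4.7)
The plan is to establish (1) by direct computation with $d_E[t] = tI_n + N$, then to derive (2) as a formal consequence of (1) and (3) as a concatenation of (2) with \eqref{E:U-inclusions}.

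For (1), I would use that $N$ commutes with $tI_n$ and is nilpotent (say $N^\ell = 0$), so that the generalized binomial expansion
\[
d_E[t]^m \;=\; (tI_n + N)^m \;=\; t^m (I_n + t^{-1}N)^m \;=\; \sum_{k=0}^{\ell-1} \binom{m}{k}\, t^{m-k}\, N^k
\]
is a finite sum valid for every $m \in \mathbb{Z}$: the geometric series in $t^{-1}N$ terminates, so no matrix inversion beyond the scalar $t^{\pm 1}$ is needed, and each $\binom{m}{k} \in \mathbb{Z}$ is interpreted in $\mathbb{F}_p$, hence has norm at most $1$ in $K_\infty$. Setting $c := \max(\|N\|, 1)$ and applying the ultrametric inequality entrywise would give
\[
\|d_E[t]^m\| \;\le\; \max_{0 \le k \le \ell-1} q^{m-k}\, c^k \;\le\; q^m \cdot c^{\ell-1},
\]
uniformly in $m$, proving the upper bound for any integer $C$ with $q^C \ge c^{\ell-1}$. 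For the matching lower bound, I would exploit the fact that $\det(d_E[t]) = \det(tI_n + N) = t^n$, since the characteristic polynomial of a nilpotent matrix is $x^n$; hence $\det(d_E[t]^m) = t^{nm}$ has norm $q^{nm}$. Combined with the ultrametric estimate $|\det D| \le \|D\|^n$ valid for every $D \in M_n(K_\infty)$ (the determinant being a signed sum of products of $n$ entries, each of norm $\le \|D\|$), this forces $\|d_E[t]^m\| \ge q^m$.

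Part (2) is then immediate, since for any $D \in M_n(K_\infty)$ the containment $D \cdot \mathcal{O}_{K_\infty}^n \subseteq t^s \mathcal{O}_{K_\infty}^n$ is equivalent to $\|D\| \le q^s$: the right inclusion of (2) is (1) applied to $d_E[t]^m$, and the left inclusion rephrases as $\|d_E[t]^{-m}\| \le q^{-m+C}$, which is (1) applied to $-m$. Part (3) is a concatenation of (2) with \eqref{E:U-inclusions}: the middle two inclusions $t^{-a-i}\mathcal{O}_{K_\infty}^n \subset \mathbf{U}_{i,\infty} \subset t^{-i}\mathcal{O}_{K_\infty}^n$ are exactly \eqref{E:U-inclusions}; the two leftmost inclusions are (2) applied with $m = -a-i-C$; and the rightmost is the left half of (2) applied with $m = -i + C$.

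The main point that will require care is the lower bound in (1): a naive argument trying to read off a leading $t^m$ from the diagonal entries of $d_E[t]^m$ can fail, both because the entries of $N$ lie in $\mathcal{O}_F$ and so may themselves have large $\infty$-norm, and because of possible cancellations coming from the $\bmod\, p$ reduction of the binomial coefficients. The determinant trick sidesteps both issues uniformly in $m \in \mathbb{Z}$, which is precisely what is needed to make the rest of the argument formal.
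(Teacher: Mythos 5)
Your proposal is correct, and parts (2) and (3) are handled exactly as in the paper: the upper bound in (1) via the terminating binomial expansion of $(t\,\mathrm{Id}_n+N)^m$ with coefficients in $\F_p$, the translation of norm bounds into lattice containments, and the concatenation with \eqref{E:U-inclusions}. Where you genuinely diverge is the lower bound $\lVert d_E[t]^m\rVert\geq q^m$. The paper's argument is a characteristic-$p$ trick: choosing $r$ with $N^{q^r}=0$ (more precisely, with the nilpotent part of $d_E[t^m]$ killed by the $q^r$-th power), one gets $d_E[t^m]^{q^r}=t^{mq^r}\mathrm{Id}_n$ because raising to a $p$-power is additive on commuting matrices, whence $\lVert d_E[t^m]\rVert^{q^r}\geq\lVert t^{mq^r}\mathrm{Id}_n\rVert=q^{mq^r}$. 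Your determinant argument --- $\det(d_E[t]^m)=t^{nm}$ since $N$ is nilpotent, combined with the ultrametric bound $\lVert\det D\rVert\leq\lVert D\rVert^n$ --- reaches the same conclusion without ever invoking characteristic $p$, so it is the more elementary and more portable of the two (it would work verbatim over any nonarchimedean normed ring where the sup norm is submultiplicative); the paper's version has the mild aesthetic advantage of staying entirely inside the $\{\tau\}$-twisted formalism already in play. Your closing remark about why a naive diagonal-entry argument fails is well taken and is precisely the pitfall both proofs are designed to avoid. One small point worth making explicit when you write this up: the equivalence $D\cdot\cO_{K_\infty}^n\subseteq t^s\cO_{K_\infty}^n\iff\lVert D\rVert\leq q^s$ uses that $\cO_{K_\infty}$ is exactly the unit ball for the sup norm in each component, which holds for the normalization fixed in the paper.
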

\begin{proof}
Recall that $d_E[t] = t \cdot \Id_n + N$, where $N$ is a nilpotent matrix such that $N^e = 0$ for some $e\leq n$.  Then for $m\in \Z$ we have
\begin{align*}
d_E[t]^m = (t \cdot\Id_n + N)^m &= t^m\cdot {\rm Id}_n + a_1 t^{m-1} N + \dots + a_{e-1} t^{m-e+1}N^{e-1}\\
&= t^{m}({\rm Id}_n + a_1 t^{-1} N + \dots + a_{e-1}t^{1-e} N^{e-1}),
\end{align*}
for some $a_i \in \F_q$.  Thus let $C\in \Z_{\geq 0}$, such that
\[q^C\geq \max_{0\leq k\leq e-1}{\lVert t^{-k} N^{k}}\rVert.\]
On the other hand, let $r\in\Bbb Z_{\geq 0}$, such that $N_{t^m}^{q^r}=0$. We have
$$\lVert d_E[t^m]\rVert^{q^r}\geq\lVert d_E[t^m]^{q^r}\rVert=\lVert t^{mq^r}I_n\rVert=\lVert t\rVert^{mq^r}.$$
Consequently, we have $\lVert d_E[t^m]\rVert\geq \lVert t^m\rVert$. So, since $d_E[t^m]=d_E[t]^m$, we have
\[q^m=\lVert t^m\rVert \leq \lVert d_E[t]^m\rVert \leq \lVert t^m\rVert\cdot q^C = q^{m+C},\]
for all $m\in\Bbb Z$. The right-side inequality above implies that
$$d_E[t]^m\cO_{K_\infty}^n\subset t^{m+C}\cO_{K_\infty}^n, \quad t^{-m-C}\cO_{K_\infty}^n\subset d_E[t]^{-m}\cO_{K_\infty}^n,$$
for all $m\in\Z$. Therefore, for all $i\geq\ell$, we have
\[t^{-a-i-2C} \cO_{K_\infty}^n \subset d_E[t]^{-a-i-C} \cO_{K_\infty}^n \subset t^{-a-i} \cO_{K_\infty}^n \subset \bU_{i,\infty} \subset t^{-i} \cO_{K_\infty}^n \subset d_E[t]^{-i+C}\cO_{K_\infty}^n.\]
\end{proof}

\begin{definition}\label{D:N-tangent} Let $M_1,M_2$ be in the class $\cC$ and have structural exact sequences as in  \eqref{E:StructuralSequenceM_i} and let $N\in\Z_{\geq 0}$. A continuous $R$--module morphism $\gamma:M_1\to M_2$ is called $N$-tangent to the identity if there exists $i\geq \ell$ such that
\begin{enumerate}
\item $\gamma$ induces a bijective isometry $(\iota_2^{-1}\circ\gamma\circ\iota_1): t^{-i}\cO_{K_\infty}^n\simeq t^{-i}\cO_{K_\infty}^n$, where $\iota_s$ is the structural map of \eqref{E:StructuralSequenceM_i}, for $s=1,2$.
\item If we let $\gamma_i$ denote the bijective isometry $(\iota_2^{-1}\circ\gamma\circ\iota_1): t^{-i}\cO_{K_\infty}^n\simeq t^{-i}\cO_{K_\infty}^n$, then
$$||\gamma_i(x)-x||\leq ||t||^{-N-a-2C}\cdot||x||, \qquad\text{ for all }x\in t^{-i}\cO_{K_\infty},$$
where $a$ and $C$ are as defined above.
\end{enumerate}
If $\gamma$ is $N$--tangent to the identity for all $N\geq 0$, $\gamma$ is called infinitely tangent to the identity.
\end{definition}

\begin{proposition}\label{P:Ntangpowerseries}
Let $\Gamma:K_\infty^n\to K_\infty^n$ be an $\F_q[G]$-linear map given by an everywhere convergent power series
\[\Gamma(z) = \text{\rm Id}_n\cdot \bz + D_1 \bz\twist +D_2 \bz\twistk{2} + \dots,\qquad\text{with } D_i \in {\text M}_n(K_\infty),\quad \bz \in K_\infty^n.\]
Let $\Lambda_1,\Lambda_2\subset K_\infty^n$ be $A[G]$-lattices and assume that $\Gamma(\Lambda_1) \subseteq \Lambda_2$. Let $\widetilde{\Gamma}:K_\infty^n/\Lambda_1\to K_\infty^n/\Lambda_2$ the map induced by $\Gamma$. Assume that $\gamma:M_1\to M_2$ is a continuous $\F_q[G]$--linear morphism such that $\iota_2^{-1}\circ\gamma\circ\iota_1=\widetilde\Gamma$ on $t^{-\ell}\cO_{K_\infty}^n$.  Then $\gamma$ is infinitely tangent to the identity.
\end{proposition}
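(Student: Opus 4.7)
The plan is to verify Definition~\ref{D:N-tangent} directly for $\iota_2^{-1}\circ\gamma\circ\iota_1 = \widetilde\Gamma$ on the disks $t^{-i}\cO_{K_\infty}^n$, for every $N\geq 0$ and some sufficiently large $i=i(N)\geq \ell$. The key input is a decay estimate for $\|D_k\|$ coming from the everywhere-convergence of $\Gamma$: for each $M\geq 0$, applying $\Gamma$ to any vector of norm $q^M$ forces $\|D_k\|\cdot q^{Mq^k}\to 0$ as $k\to\infty$, so there exists a constant $C_M$ with $\|D_k\|\leq C_M\, q^{-Mq^k}$ for all $k\geq 1$.

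Next, I would estimate $\|\Gamma(\bz)-\bz\|$ on small disks. For $\bz\in t^{-i}\cO_{K_\infty}^n$, using $\|\bz\twistk{k}\| = \|\bz\|^{q^k}$ and the ultrametric inequality,
\[
\frac{\|\Gamma(\bz)-\bz\|}{\|\bz\|}\leq \max_{k\geq 1}\|D_k\|\cdot\|\bz\|^{q^k-1}\leq \max_{k\geq 1} C_M\, q^{-(M+i)q^k + i}.
\]
Once $M+i\geq 1$, the exponent is strictly decreasing in $k$, so the maximum occurs at $k=1$ and equals $C_M\, q^{i(1-q)-Mq}$; for any given $N$, choosing first $M$ and then $i$ large enough makes this at most $q^{-N-a-2C}$, which is precisely part~(2) of Definition~\ref{D:N-tangent}. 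In particular $\|\Gamma(\bz)-\bz\| < \|\bz\|$, so by the ultrametric identity $\|\Gamma(\bz)\| = \|\bz\|$, which means $\Gamma$ maps $t^{-i}\cO_{K_\infty}^n$ isometrically into itself. For surjectivity onto the same disk, I would apply a Banach-style contraction argument: additivity of $\tau$-powers in characteristic~$p$ yields $\Gamma(\bz_1)-\Gamma(\bz_2)-(\bz_1-\bz_2)=\sum_{k\geq 1}D_k(\bz_1-\bz_2)\twistk{k}$, which satisfies the same bound; hence for any target $\by\in t^{-i}\cO_{K_\infty}^n$, the map $T_\by(\bz):=\bz+\by-\Gamma(\bz)$ is a strict contraction of the complete space $t^{-i}\cO_{K_\infty}^n$ (it maps the disk into itself by the ultrametric), whose unique fixed point solves $\Gamma(\bz)=\by$. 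Since $t^{-i}\cO_{K_\infty}^n$ embeds isometrically into $K_\infty^n/\Lambda_s$ for $i\geq\ell$ (by the choice of $\ell$ together with discreteness of $\Lambda_s$), the conclusions pass from $\Gamma$ to $\widetilde\Gamma$, and hence to $\gamma$.

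The main subtlety, rather than a genuine obstacle, is controlling the estimate uniformly in $k$; this is resolved as soon as one observes that the double-exponential decay $q^{-Mq^k}$ forces the $k\geq 2$ terms to be automatically dominated by the $k=1$ term, reducing the whole verification to a single explicit inequality on $M$, $i$, and $N$.
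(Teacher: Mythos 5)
Your argument is correct and its skeleton is the same as the paper's: everything reduces to the estimate $\norm{\Gamma(\bz)-\bz}\leq\bigl(\max_{k\geq 1}\norm{D_k}\cdot\norm{\bz}^{q^k-1}\bigr)\cdot\norm{\bz}$ on the disks $t^{-i}\cO_{K_\infty}^n$, followed by letting $i$ grow to meet the threshold $\norm{t}^{-N-a-2C}$ of Definition \ref{D:N-tangent}(2). Two places where you diverge are worth noting. First, the paper extracts only boundedness of the coefficients from everywhere convergence (setting $D=\sup_k\norm{D_k}$) and then uses $\norm{\bz}^{q^k-1}\leq\norm{\bz}^{q-1}\leq q^{-i(q-1)}\to 0$; your double-exponential bound $\norm{D_k}\leq C_M q^{-Mq^k}$ is valid but stronger than needed, since on a disk of radius at most $1$ the $k\geq 2$ terms are already dominated by the $k=1$ term without any decay of the $\norm{D_k}$ (indeed your own argument with $M=0$ collapses to the paper's). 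Second, for bijectivity of $\Gamma$ on $t^{-i}\cO_{K_\infty}^n$ the paper invokes the non-archimedean inverse function theorem (citing Igusa), whereas you prove it inline by a contraction-mapping argument for $T_{\by}(\bz)=\bz+\by-\Gamma(\bz)$, using the additivity of the Frobenius twists in characteristic $p$ to get $\Gamma(\bz_1)-\Gamma(\bz_2)-(\bz_1-\bz_2)=\sum_{k\geq 1}D_k(\bz_1-\bz_2)\twistk{k}$; this makes the step self-contained and delivers injectivity (isometry on differences) for free. Both routes are sound, and your passage from $\Gamma$ to $\widetilde\Gamma$ and $\gamma$ via $t^{-i}\cO_{K_\infty}^n\cap\Lambda_s=\{0\}$ matches the paper's setup.
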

\begin{proof}
Let $N\geq 1$. We will show that $\gamma$ is $N$--tangent to the identity.
Since the power series for $\Gamma$ is everywhere convergent, the coefficients $D_i$ must be bounded in norm.  Let $D:=\sup_i||D_i||.$
Thus, if $i\geq \ell$ is sufficiently large and $\bz\in t^{-i}\cO_{K_\infty}^n$, then we have
$$\lVert(\iota_2^{-1}\circ\gamma\circ\iota_1)(\bz) - \bz\rVert = \lVert(D_1\bz^q + D_2 \bz^{q^2} + \cdots)\rVert\leq D\cdot||\bz||^q, \quad \lVert(\iota_2^{-1}\circ\gamma\circ\iota_1)(\bz)\rVert=\lVert \bz\rVert.$$
In particular, if $i$ is sufficiently large, then $(\iota_2^{-1}\circ\gamma\circ\iota_1): t^{-i}\cO_{K_\infty}^n\to t^{-i}\cO_{K_\infty}^n$ is an isometry, which is strictly
differentiable at $0$ and $(\iota_2^{-1}\circ\gamma\circ\iota_1)'(0)=1$. By the non-archimedean inverse function theorem (see \cite[2.2]{Igusa}), for all $i\gg\ell$ the map $(\iota_2^{-1}\circ\gamma\circ\iota_1): t^{-i}\cO_{K_\infty}^n\simeq t^{-i}\cO_{K_\infty}^n$ is a bijective isometry.
Further, for all $i\gg \ell$ and all $\bz\in t^{-i}\cO_{K_\infty}^n\setminus \{0\}$, we have
$$\frac{\lVert(\iota_2^{-1}\circ\gamma\circ\iota_1)(\bz) - \bz\rVert}{\lVert \bz\rVert}\leq D\lVert \bz\rVert^{q-1}\leq D\lVert t\rVert^{-i(q-1)}\leq\lVert t\rVert^{-N-a-2C},$$
which shows that, indeed, $\gamma$ is $N$--tangent to the identity.
\end{proof}

\begin{definition} Let $M_1, M_2\in\mathcal C$ and let $\gamma:M_1\simeq  M_2$ be an $R$--linear topological isomorphism. We define the endomorphism $\Delta_\gamma$ of $\power{M_1}{T\inv}$ by
\begin{equation}\label{D:Deltagamma}
\Delta_\gamma := \frac{1 - \gamma\inv t \gamma T\inv}{1-tT\inv} - 1=\sum_{m=1}^\infty \delta_m T^{-m},
\end{equation}
where $\delta_m = (t-\gamma\inv t \gamma )t^{m-1}$, for all $m\geq 1.$
\end{definition}

\begin{lemma}With notations as in the definition above,
if the topological $R$--linear isomorphism $\gamma:M_1\simeq M_2$ is $N$-tangent to the identity, then the map $(\Delta_\gamma$ mod $T^{-N})$ is a nuclear endomorphism of $\power{M_1}{T\inv}/T^{-N}$. Furthermore, if $\gamma$ is infinitely tangent to the identity, then $\Delta_\gamma$ is a nuclear endomorphism of $M_1[[T^{-1}]]$.
\end{lemma}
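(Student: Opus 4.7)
The plan is to reduce the nuclearity claim to showing that, for each relevant index $m$, the coefficient $\delta_m=(t-\gamma^{-1}t\gamma)t^{m-1}$ of $\Delta_\gamma = \sum_{m\geq 1}\delta_m T^{-m}$ is a locally contracting endomorphism of $M_1$ with respect to the basis $\cU=\{\bU_{i,\infty}\}_{i\geq \ell}$ of Definition \ref{D:UNeighborhoods}(2). For the truncation $(\Delta_\gamma \bmod T^{-N})$, nuclearity will follow from Proposition \ref{P:DeterminantFacts} by taking a common nucleus for the finitely many operators $\delta_1,\ldots,\delta_{N-1}$; for the infinitely tangent case, handling each individual $\delta_m$ yields nuclearity of $\Delta_\gamma$ on $\power{M_1}{T^{-1}}$.

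First I would derive the key algebraic identity obtained by adding and subtracting $t\bw$ and $\gamma^{-1}(t\bw)$ inside $\gamma^{-1}t\gamma(\bw)$, namely
\[
\delta_m(\bu)=-(\gamma^{-1}-\mathrm{id})(t\bw)-\gamma^{-1}\bigl(t(\gamma-\mathrm{id})\bw\bigr),\qquad \bw:=t^{m-1}\bu.
\]
This isolates the deviation of $\gamma$ from the identity on both factors, on which the $N$-tangency hypothesis gives sharp control: $\|(\gamma-\mathrm{id})\bx\|\leq \|t\|^{-N-a-2C}\|\bx\|$ on $t^{-j}\cO_{K_\infty}^n$ for all $j$ beyond the tangency threshold, and the same bound for $\gamma^{-1}-\mathrm{id}$ follows from $\gamma$ being a bijective isometry there.

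Combining this with the operator bound $\|d_E[t]^j\|\leq \|t\|^{j+C}$ from Lemma \ref{L:Containments} and the sandwich $t^{-i-a}\cO_{K_\infty}^n\subseteq \bU_{i,\infty}\subseteq t^{-i}\cO_{K_\infty}^n$ from \eqref{E:U-inclusions}, a direct bookkeeping of norms yields
\[
\|\delta_m(\bu)\| \;\leq\; \|t\|^{-N-a+m-i}
\]
for all $\bu\in \bU_{i,\infty}$, provided $i$ is large enough that both $\bw$ and $t\bw$ remain in the tangency domain. The containment $\delta_m(\bU_{i,\infty})\subseteq \bU_{i+1,\infty}$ then holds precisely when $\|t\|^{-N-a+m-i}\leq \|t\|^{-i-1-a}$, i.e.\ $m\leq N-1$, which is exactly the range of coefficients appearing in $(\Delta_\gamma \bmod T^{-N})$. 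Under the infinite tangency hypothesis, the same estimate applied with some $N'>m$ in place of $N$ shows $\delta_m$ is locally contracting for every $m$.

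The main obstacle will be the tight numerical accounting: the nilpotent part of $d_E[t]=tI_n+N$ introduces an expansion factor of $\|t\|^{j+C}$ per iterate of $t$, which must be absorbed by the tangency contraction $\|t\|^{-N-a-2C}$. The constants $a$ (relating $\bU_{i,\infty}$ to the standard neighborhoods) and $C$ (measuring the excess of $d_E[t]$ over multiplication by $t$) must be tracked precisely through the identity above so that the clean inequality $m\leq N-1$ falls out; this tight matching between the tangency index and the truncation index is exactly what motivates the specific factor $\|t\|^{-N-a-2C}$ in the definition of $N$-tangency.
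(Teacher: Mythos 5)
Your proposal is correct and follows essentially the same route as the paper: the identity $\delta_m=\gamma_i^{-1}(\gamma_i-1)d_E[t]^m+\gamma_i^{-1}d_E[t](1-\gamma_i)d_E[t]^{m-1}$ you derive (up to rearranging signs) is exactly the decomposition used there, and your norm bookkeeping via Lemma \ref{L:Containments} and \eqref{E:U-inclusions} reproduces the paper's estimates, landing $\delta_m(\bU_{j,\infty})$ inside $t^{-a-j-1}\cO_{K_\infty}^n\subseteq\bU_{j+1,\infty}$ for $m\leq N-1$. The only cosmetic difference is that the paper phrases the conclusion as containments of neighborhoods rather than a single norm inequality, and quantifies the "tangency domain" condition explicitly as $j\geq m+2C+i$.
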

\begin{proof}
We use the arguments in \cite[Lemma 5.1.5]{FGHP20}, except for a few norm estimates which must be slightly adjusted. Let $1\leq m<N$. Let $i$ and $\gamma_i$ be as in Definition \ref{D:N-tangent}. Then it is easy to check that for all $j\geq m+2C+i$, we have equalities of functions defined on  $t^{-j}\cO_{K_\infty}^n$
\[\delta_{m}=(d_E[t]-\gamma_i^{-1}d_E[t]\gamma_i)d_E[t]^{m-1}=\gamma_i^{-1}(\gamma_i-1)d_E[t]^m+\gamma_i^{-1}d_E[t](1-\gamma_i)d_E[t]^{m-1}.\]
Consequently, the conditions imposed upon $\gamma_i$ in Definition \ref{D:N-tangent}(2) and the inclusions in Lemma \ref{L:Containments}(2) imply that
\[(\gamma_i^{-1}(\gamma_i-1)d_E[t]^m)(t^{-j}\cO_{K_\infty}^n)\subset t^{-a-j-1-C}\cO_{K_\infty}^n,\]
\[(\gamma_i^{-1}d_E[t](1-\gamma_i)d_E[t]^{m-1})(t^{-j}\cO_{K_\infty}^n)\subset t^{-a-j-1}\cO_{K_\infty}^n. \]
Thus the inclusions \eqref{E:U-inclusions} imply that $\delta_{m}(\bU_{j,\infty})\subset \bU_{j+1, \infty}$, which concludes the proof.
\end{proof}

Now, we treat the particular case $M_1=M_2=\Lie_E(K_\infty)/\Lambda$, for an $A[G]$--projective lattice $\Lambda\subseteq \Lie_E(K_\infty)$. As above, we fix $\ell>0$ such that $t^{-\ell}\cO_{K_\infty}^n\cap\Lambda=\{0\}$ and fix $a\in\Z_{>0}$ satisfying \eqref{E:U-inclusions} and $C\in\Z_{\geq 0}$ satisfying Lemma \ref{L:Containments}. For simplicity, we let $V:=\Lie_E(K_\infty)/\Lambda$.

\begin{definition}\label{D:contraction}
An $R$--linear, continuous endomorphism $\phi: V\to V$ is called a local $M$--contraction, for some
$M\in\Bbb Z_{>0}$, if there exists $i\geq\ell$ such that
$$||\phi(x)||\leq ||t||^{-M}\cdot||x||, \text{ for all }x\in t^{-i}\cO_{K_\infty}^n.$$
\end{definition}

\begin{remark}\label{R:contraction-nuclear}
If $\phi$ as above is a local $M$--contraction for some $M>a$, then $\phi$ is locally contracting on $V$ and therefore
the nuclear determinant ${\rm det}_{R[[Z]]}(1-\phi\cdot Z|V)$ makes sense.  Indeed, pick an $i>\ell$ as in the definition above.
Then, for all $j\geq i$, we have
$$\phi(\bU_{j,\infty})\subseteq\phi(t^{-j}\cO_{K_\infty}^n)\subseteq t^{-j-M}\cO_{K_\infty}^n\subseteq t^{-j-a-1}\cO_{K_\infty}^n\subseteq \bU_{j+1, \infty}.$$
This shows that $\bU_{i, \infty}$ is a nucleus for $\phi$.
\end{remark}

\begin{proposition}\label{P:CommutingOperators} Let $\gamma:V\simeq   V$ be an $R$--linear, continuous isomorphism which is $N$--tangent to the identity.
Let $\alpha := d_E[t]\gamma$ and let $\psi:V\to V$ be an $R$-linear, continuous, local $M$-contraction for $M>2a+5C$.  Let $D:=M-(1+C).$ Then the following hold.
\begin{enumerate}
\item $\alpha\psi$ and $\psi\alpha$ are local $D$--contractions on $V$.
\item ${\rm det}_{R[[Z]]}(1-\alpha\psi\cdot Z|V)={\rm det}_{R[[Z]]}(1-\psi\alpha\cdot Z|V).$
\end{enumerate}
\end{proposition}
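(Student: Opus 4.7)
The plan is to establish (1) by direct norm estimates and to deduce (2) from a Sylvester-type determinant identity applied to descents of $\alpha$ and $\psi$ on a pair of carefully chosen finite quotients of $V$.

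For (1), I would combine two ingredients: by Definition \ref{D:N-tangent}, for $i$ large $\gamma$ restricts to a bijective isometry of $t^{-i}\cO_{K_\infty}^n$, so $\|\gamma(x)\|=\|x\|$ there; and by Lemma \ref{L:Containments}(1), $\|d_E[t]\,y\|\leq q^{1+C}\|y\|$ for all $y$. Composing yields $\|\alpha(x)\|\leq q^{1+C}\|x\|$ on small enough neighborhoods, and hence
\[\|\alpha\psi(x)\|\leq q^{1+C-M}\|x\|=q^{-D}\|x\|,\]
with the bound for $\psi\alpha$ following symmetrically, after enlarging the threshold $i$ so that $\alpha(x)$ still lies in the region where $\psi$ is $M$-contracting.

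For (2), the strategy is to reduce the nuclear determinant identity to a classical Sylvester identity between maps of two finite $R$-modules. Pick $U:=\bU_{i,\infty}$ with $i$ large and set $U':=\bU_{i+a+1+C,\infty}$. Using the norm estimates from (1) together with the inclusions \eqref{E:U-inclusions} and the hypothesis $M>2a+5C$, I would verify:
\begin{enumerate}
\item[(i)] $\psi(U)\subseteq U'$ and $\alpha(U')\subseteq U$, so that $\psi$ and $\alpha$ descend to $R$-linear maps $\bar\psi\colon V/U\to V/U'$ and $\bar\alpha\colon V/U'\to V/U$;
\item[(ii)] $U$ is a nucleus for $\alpha\psi$ and $U'$ is a nucleus for $\psi\alpha$.
\end{enumerate}
By (ii) and the very definition of the nuclear determinant, it then suffices to prove
\[\det\nolimits_{R[[Z]]}(1-\bar\alpha\bar\psi Z\mid V/U)=\det\nolimits_{R[[Z]]}(1-\bar\psi\bar\alpha Z\mid V/U').\]
Both $V/U$ and $V/U'$ are finite and $G$-c.t., hence $R$-projective (by the $\F_q[G]$-analogue of Corollary 7.1.7 of \cite{FGHP20}); realizing each as a direct summand of a finite free $R$-module lifts $\bar\alpha$ and $\bar\psi$ to rectangular matrices, and the classical Sylvester identity $\det(I_m-MN)=\det(I_n-NM)$ for such matrices over the commutative ring $R[[Z]]/Z^N$ (applied at each truncation and passed to the inverse limit in $N$) yields the desired equality.

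The main obstacle I expect is the bookkeeping in the choice of $U$ and $U'$: they must be tight enough that $\alpha$ and $\psi$ descend compatibly between the two quotients, yet calibrated so that $\alpha\psi$ and $\psi\alpha$ are honestly captured as nuclear determinants on those respective quotients. This is precisely what the quantitative hypothesis $M>2a+5C$ is engineered to guarantee, through the slack $a$ in \eqref{E:U-inclusions} and the constant $C$ from Lemma \ref{L:Containments}. Everything else (existence of matrix lifts, commutativity of $R[[Z]]/Z^N$, passage to the inverse limit) is routine once the right nuclei have been chosen.
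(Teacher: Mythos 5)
Your argument for part (1) is the same as the paper's: bound $\|\alpha(x)\|\leq q^{1+C}\|x\|$ via the isometry property of $\gamma$ and Lemma \ref{L:Containments}(1), then compose with the $M$-contraction $\psi$, enlarging the threshold so that the intermediate point stays in the contracting region. For part (2), however, you take a genuinely different route. The paper uses the surjectivity of $\alpha$ (which relies on the $t$-divisibility of $V=\Lie_E(K_\infty)/\Lambda$, hence on $\gamma$ being a global isomorphism) to produce an isomorphism $V/\alpha^{-1}(\bU_{i,\infty})\simeq V/\bU_{i,\infty}$ that conjugates $\alpha\psi$ into $\psi\alpha$; the determinant identity then lands on the nonstandard quotient $V/\alpha^{-1}(\bU_{i,\infty})$, and the bulk of the paper's proof is a chain of short-exact-sequence arguments (using the decomposition of $\alpha^{-1}(\bU_{i,\infty})$ involving $\Lambda$ and the fact that $\psi\alpha$ annihilates the relevant subquotients) needed to replace $\alpha^{-1}(\bU_{i,\infty})$ by the standard nucleus $\bU_{i+1,\infty}$. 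Your rectangular-Sylvester argument between the two standard quotients $V/U$ and $V/U'$ sidesteps all of this: it never uses surjectivity of $\alpha$ or the lattice $\Lambda$, at the modest cost of invoking the identity $\det(1-fgZ)=\det(1-gfZ)$ for maps $f\colon P\to Q$, $g\colon Q\to P$ of finitely generated projective modules over a commutative ring --- which is legitimate here, since $V/U$ and $V/U'$ are finite and $G$-c.t., hence $R$-projective, and the identity follows from the matrix case by extending $f$ and $g$ by zero on complements. Your quantitative checks also go through: $\psi(U)\subseteq U'$ requires $M\geq 2a+1+C$, $\alpha(U')\subseteq U$ follows from \eqref{E:U-inclusions}, Lemma \ref{L:Containments}(2) and the fact that the isometry property of $\gamma$ on $t^{-i}\cO_{K_\infty}^n$ propagates to all $t^{-j}\cO_{K_\infty}^n$ with $j\geq i$, and the nucleus claims in (ii) follow from part (1) together with Remark \ref{R:contraction-nuclear} since $D\geq 2a+4C>a$ --- all comfortably within the hypothesis $M>2a+5C$. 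The composites $\bar\alpha\bar\psi$ on $V/U$ and $\bar\psi\bar\alpha$ on $V/U'$ are indeed the reductions of $\alpha\psi$ and $\psi\alpha$ on their respective nuclei, so the finite determinants you compare really are the two nuclear determinants. In short: correct, and arguably cleaner than the published argument.
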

\begin{proof}
Again, some of the degree estimates in the proof are slightly different than those in \cite[Prop. 5.2.3]{FGHP20}, so we give the details here.
Fix $i>\ell$ such that $\gamma:t^{-(i-1-C)}\cO_{K_\infty}^n\to t^{-(i-1-C)}\cO_{K_\infty}^n$ is a bijective isometry and such that
\[||\psi(x)||\leq ||t||^{-M}\cdot ||x||, \text{ for all }x\in t^{-(i-1-C)}\cO_{K_\infty}^n.\]

(1) For $i$ chosen as above it is easy to check that
$$||\alpha\psi(x)||\leq ||t||^{-D}||x||, \quad ||\psi\alpha(x)||\leq ||t||^{-D}||x||, \text{ for all }x\in t^{-i}\cO_{K_\infty}.$$
So, $\alpha\psi$ and $\psi\alpha$ are $D$--contractions on $t^{-i}\cO_{K_\infty}$. Since $D>a$, these are locally contracting endomorphisms of $V$ by Remark \ref{R:contraction-nuclear}, and so the nuclear determinants in (2) make sense.
\medskip

(2) Since $M, D>a$, Remark \ref{R:contraction-nuclear} combined with the proof of (1) above show that $\psi$, $\alpha\psi$, and  $\psi\alpha$
are all locally contracting on $V$ of common nuclei $\bU_{j, \infty}$, for all $j\geq i$ and $i$ chosen as above.
Now, since $\gamma$ is an isomorphism and $V$ is $t$--divisible (because $\Lie_E(K_\infty)$ is, as $d_E[t]\in{\rm GL}_n(K_\infty)$), $\alpha$ is surjective. Therefore $\alpha$ induces an $R$--module isomorphism
$$V/\alpha^{-1}(\bU_{i,\infty})\overset{\alpha}\simeq V/\bU_{i, \infty}.$$
Since $\Lambda\cap \bU_{i, \infty}=\{0\}$, if we let $\alpha^{-1}(\bU_{i, \infty})^\ast:=\gamma^{-1}(d_E[t]\inv \bU_{i,\infty})$, we have
$$\alpha^{-1}(\bU_{i, \infty})=\gamma^{-1}(d_E[t]\inv\Lambda/\Lambda)\oplus \alpha^{-1}(\bU_{i, \infty})^\ast.$$
Since $\gamma$ and $d_E[t]$ are isomorphisms, the $R$--modules $\gamma^{-1}(d_E[t]\inv \bU_{i,\infty})$, $\gamma^{-1}(d_E[t]\inv\Lambda/\Lambda)$ and $\alpha^{-1}(\bU_{i, \infty})$ are projective
(i.e. $G$--c.t.) because $\bU_{i,\infty}$ and $\Lambda$ are $G$--c.t. By Lemma \ref{L:Containments} we get
\begin{equation}\label{E:alpha-plus}
t^{-(i+1)-a-C}\cO_{K_\infty}^n\subseteq \alpha\inv(\bU_{i, \infty})^\ast\subseteq t^{-(1+i)+C}\cO_{K_\infty}^n,\quad t^{-(i+1)-a}\cO_{K_\infty}^n\subseteq \bU_{i+1, \infty}\subseteq t^{-(i+1)}\cO_{K_\infty}^n.
\end{equation}
Consequently, we get
\begin{equation}\label{E:psi-alpha}
(\psi\alpha)(\alpha^{-1}(\bU_{i, \infty}))=\psi(\bU_{i, \infty})\subseteq t^{-i-M}\cO_{K_\infty}^n\subseteq t^{-(i+1)-a-C}\cO_{K_\infty}^n\subseteq \alpha^{-1}(\bU_{i, \infty})^\ast\subseteq \alpha^{-1}(\bU_{i, \infty}).
\end{equation}
So, we have a commutative diagram of morphisms of finite, projective $R$--modules
$$\xymatrix{
V/\alpha^{-1}(\bU_{i, \infty})\ar@{>}[r]^{\quad \alpha}_{\quad \sim}\ar@{>}[d]^{\,\psi\alpha} & V/\bU_{i, \infty}\ar@{>}[d]^{\,\alpha\psi} \\
V/\alpha^{-1}(\bU_{i, \infty})\ar@{>}[r]^{\quad \alpha}_{\quad \sim} &V/\bU_{i, \infty} ,}$$
whose horizontal maps are isomorphisms. This gives an equality of (regular) determinants
\begin{equation}\label{E:det2}{\rm det}_{R[[Z]]}(1-\alpha\psi\cdot Z|V/\bU_{i, \infty})={\rm det}_{R[[Z]]}(1-\psi\alpha\cdot Z|V/\alpha^{-1}(\bU_{i, \infty})).\end{equation}
Now, consider the short exact sequence of projective $R$--modules
$$0\to {\alpha^{-1}(\bU_{i, \infty})}/{\alpha^{-1}(\bU_{i, \infty})^\ast}\to V/{\alpha^{-1}(\bU_{i, \infty})^\ast}\to V/{\alpha^{-1}(\bU_{i, \infty})}\to 0.$$
Noting that \eqref{E:psi-alpha} implies that $\psi\alpha$ induces an $R$--linear endomorphism of the exact sequence above and that $\psi\alpha\equiv 0$ on ${\alpha^{-1}(\bU_{i, \infty})}/{\alpha^{-1}(\bU_{i, \infty})^\ast}$,
the exact sequence above gives
\begin{equation}\label{E:det3}{\rm det}_{R[[Z]]}(1-\psi\alpha\cdot Z|V/\alpha^{-1}(\bU_{i, \infty}))={\rm det}_{R[[Z]]}(1-\psi\alpha\cdot Z|V/\alpha^{-1}(\bU_{i, \infty})^\ast).
\end{equation}

Now, enlarge $i$ further so that $\psi\alpha$ is a $D$--contraction on $t^{-(i+1)+C}\cO_{K_\infty}^n$ (see proof of part (1).) Since $D\geq 2a+4C$, equation \eqref{E:alpha-plus} leads to the following inclusions
$$\psi\alpha(\alpha^{-1}(\bU_{i, \infty})^\ast),\, \psi\alpha(\bU_{i+1,\infty})\subseteq t^{-2a-3C-(i+1)}\cO_{K_\infty}^n\subseteq t^{-a-2C}\left(\alpha^{-1}(\bU_{i, \infty})^\ast\right).$$
Now, since \eqref{E:alpha-plus} also implies that
$$t^{-a-2C}\left(\alpha^{-1}(\bU_{i, \infty})^\ast\right)\subseteq \bU_{i+1,\infty},\, \alpha^{-1}(\bU_{i, \infty})^\ast,$$
the last displayed inclusions show that $\psi\alpha\equiv 0$ on the quotients
$$\bU_{i+1,\infty}/t^{-a-2C}\left(\alpha^{-1}(\bU_{i, \infty})^\ast\right),\quad \alpha^{-1}(\bU_{i, \infty})^\ast/t^{-a-2C}\left(\alpha^{-1}(\bU_{i, \infty})^\ast\right).$$ Consequently, a short exact sequence argument similar to the one used to prove \eqref{E:det3} above gives the following equalities of (regular) determinants
\begin{eqnarray*}
 {\rm det}_{R[[Z]]}(1-\psi\alpha\cdot Z|V/\alpha^{-1}(\bU_{i, \infty})^\ast)  &=& {\rm det}_{R[[Z]]}(1-\psi\alpha\cdot Z|V/t^{-a}\alpha^{-1}(\bU_{i, \infty})^\ast) \\
  &=& {\rm det}_{R[[Z]]}(1-\psi\alpha\cdot Z|V/\bU_{i+1,\infty}).
\end{eqnarray*}
Now, we combine these equalities with \eqref{E:det2} and \eqref{E:det3} to obtain
$${\rm det}_{R[[Z]]}(1-\alpha\psi\cdot Z|V/\bU_{i, \infty})={\rm det}_{R[[Z]]}(1-\psi\alpha\cdot Z|V/\bU_{i+1,\infty}).$$
Recalling that $\bU_{i, \infty}$ and $\bU_{i+1, \infty}$ are common nuclei for $\psi\alpha$ and $\alpha\psi$, this leads to the desired equality
of nuclear determinants, which concludes the proof of part (2).
\end{proof}

\begin{lemma}\label{L:MonomialsNtangent}
Let $\psi$, $\gamma$, $\alpha$ and $M$ be as in Proposition \ref{P:CommutingOperators}, except that we require that $M>((C+1)N)$, for some fixed $N \geq 1$.  Let $\beta$ be in the $R$--subalgebra $R\{\alpha, \psi\}$ of ${\rm End}_R(V)$ generated by $\alpha$ and $\psi$ with the property that it is a sum of monomials containing at least one factor of $\psi$ and of degree at most $m$, for some $m\leq N$.  Then $\beta$ is a local $[M-(C+1)(m-1)]$-contraction.
\end{lemma}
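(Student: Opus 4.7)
The plan is to reduce the claim to the case of a single monomial by using the non-archimedean (ultrametric) inequality, and then estimate that monomial factor-by-factor. Two elementary bounds do the work: $\alpha$ can stretch norms by at most a bounded factor (because $\gamma$ is an isometry on a small enough neighborhood and $d_E[t]$ is globally bounded in norm by Lemma \ref{L:Containments}(1)), while each factor of $\psi$ pulls in norm by $\|t\|^{-M}$.

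First I would fix $i_0 \geq \ell$ large enough that, simultaneously, $\gamma$ restricts to a bijective isometry on $t^{-i_0}\cO_{K_\infty}^n$ satisfying the $N$-tangency bound of Definition \ref{D:N-tangent}, and $\psi$ satisfies $\|\psi(x)\| \leq \|t\|^{-M}\|x\|$ there (Definition \ref{D:contraction}). Because the $N$-tangency bound on $\gamma$ yields $\|\gamma(x)\|=\|x\|$ on $t^{-i_0}\cO_{K_\infty}^n$, Lemma \ref{L:Containments}(1) gives
\[
\|\alpha(x)\| = \|d_E[t]\gamma(x)\| \leq \|t\|^{1+C}\|x\|, \qquad x\in t^{-i_0}\cO_{K_\infty}^n,
\]
and by hypothesis $\|\psi(x)\| \leq \|t\|^{-M}\|x\|$ on the same set. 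To guarantee that intermediate images of partial products stay inside $t^{-i_0}\cO_{K_\infty}^n$ (where these bounds are valid), I would then replace $i_0$ by
\[
i := i_0 + (N-1)(C+1),
\]
so that any prefix of length $k \leq m-1 \leq N-1$ of a monomial of degree $\leq m$ applied to $x\in t^{-i}\cO_{K_\infty}^n$ has norm at most $\|t\|^{k(C+1)-i} \leq \|t\|^{-i_0}$. This is the only piece of neighborhood bookkeeping in the argument, and it is the step where the hypothesis $m \leq N$ is used.

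Third, for a single monomial $\beta_0 = X_1 X_2 \cdots X_{m'}$ of degree $m' \leq m$ with $j \geq 1$ factors equal to $\psi$ and $m'-j$ factors equal to $\alpha$, I would iterate the two bounds from right to left to get
\[
\|\beta_0(x)\| \leq \|t\|^{(m'-j)(C+1) - jM}\,\|x\| \leq \|t\|^{-[M-(C+1)(m-1)]}\,\|x\|,
\]
for $x\in t^{-i}\cO_{K_\infty}^n$, where the last inequality uses $j\geq 1$ and $m'\leq m$. Since $\beta$ is an $R$-linear combination of such monomials, the ultrametric inequality $\|\beta(x)\| \leq \max_{\beta_0}\|\beta_0(x)\|$ preserves the same bound, so $\beta$ is a local $[M-(C+1)(m-1)]$-contraction on $t^{-i}\cO_{K_\infty}^n$. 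Finally, the hypothesis $M > (C+1)N$ together with $m \leq N$ forces
\[
M-(C+1)(m-1) > (C+1)(N-m+1) \geq C+1 > 0,
\]
so the exponent is a positive integer, as required by Definition \ref{D:contraction}. The one piece of the argument that is not purely mechanical is the propagation of intermediate norms in step two; after that is handled, everything is a one-line factor-by-factor estimate plus the ultrametric inequality.
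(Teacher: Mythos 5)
Your argument is correct and is essentially the paper's own proof: the same two bounds $\lVert\alpha(x)\rVert\leq\lVert t\rVert^{C+1}\lVert x\rVert$ (isometry of $\gamma$ plus Lemma \ref{L:Containments}(1)) and $\lVert\psi(x)\rVert\leq\lVert t\rVert^{-M}\lVert x\rVert$, iterated over the factors of each monomial and combined with the ultrametric inequality. The only difference is presentational: you make the choice of neighborhood explicit via $i=i_0+(N-1)(C+1)$, whereas the paper absorbs the same bookkeeping into the requirement that $\psi$ be an $M$-contraction on the larger neighborhood $t^{-i+(C+1)(m-1)}\cO_{K_\infty}^n$; both handle the growth of intermediate images identically.
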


\begin{proof} Take $i\gg 0$, such that $\gamma$ is a bijective isometry on $t^{-i}\cO_{K_\infty}^n$ and $\psi$ is an $M$--contraction on $t^{-i+(C+1)(m-1)}\cO_{K_\infty}^n$
Then, for all $x \in t^{-i}\cO_{K_\infty}^n$, we have
\[\lVert \alpha(x)\rVert = \lVert d_E[t]\gamma (x)\rVert \leq \lVert t\rVert ^{C+1} \lVert x \rVert.\]  Consequently, since $\beta$ contains at least one occurrence of $\psi$, we also have
\[\Vert \beta(x) \rVert  \leq \lVert t\rVert ^{(C+1)(m-1) - M}\lVert x\rVert.\]
\end{proof}

\begin{corollary}\label{C:det=1}
Let $N\geq a$ and let $\gamma:V\simeq V$ be an $R$--linear, continuous isomorphism which is $((C+2)(N+1))$--tangent to the identity. Then, we have
$${\rm det}_{R[[T^{-1}]]/T^{-N}}(1+\Delta_\gamma\,|\, V[[T^{-1}]]/T^{-N})=1.$$
\end{corollary}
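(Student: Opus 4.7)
The plan is to factor $1 + \Delta_\gamma$ modulo $T^{-N}$ as a finite product of elementary nuclear operators of the form
\[
(1 + \alpha\beta\, T^{-m})(1 + \beta\alpha\, T^{-m})^{-1},
\]
where $\alpha := d_E[t]\gamma$ and $\beta$ is a polynomial in $\alpha$ and $\psi := \gamma - 1$ carrying at least one factor of $\psi$, and then to invoke the commutativity identity of Proposition~\ref{P:CommutingOperators}(2) together with multiplicativity of the nuclear determinant (Proposition~\ref{P:DeterminantFacts}(1)). Each such factor has nuclear determinant equal to $1$, so the decomposition immediately yields the conclusion.

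The algebraic starting point is the identity
\[
1 + \Delta_\gamma \;=\; (1 - \gamma^{-1}d_E[t]\gamma\, T^{-1})\,(1 - d_E[t]\, T^{-1})^{-1},
\]
from which a direct expansion of the inverse as a geometric series, combined with the observation that $\gamma\psi = \psi\gamma$, gives
\[
\delta_m \;=\; \gamma^{-1}[\psi, d_E[t]]\, d_E[t]^{m-1} \;=\; \gamma^{-1}[\psi,\alpha]\,(\gamma^{-1}\alpha)^{m-1}\gamma^{-1};
\]
in particular, each $\delta_m$ is already \emph{one commutator deep} in the pair $\alpha,\psi$, modulo surrounding factors of $\gamma^{\pm 1}$. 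Under the tangency hypothesis, $\psi$ is a local $M$--contraction with $M = (C+2)(N+1) + a + 2C$, and a short check using $N \geq a$ shows that any polynomial in $\alpha,\psi$ of degree at most $N$ containing at least one $\psi$ is locally contracting with constant $\geq 2a + 5C + 1$ by Lemma~\ref{L:MonomialsNtangent}. This is precisely the threshold required so that Proposition~\ref{P:CommutingOperators}(2) is applicable to every factor produced below.

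The inductive core is then: suppose for some $k\in\{1,\dots,N-1\}$ we have already written
\[
1 + \Delta_\gamma \;\equiv\; F_{k-1}\,(1 + \Phi_{k-1}) \pmod{T^{-N}},
\]
with $F_{k-1}$ a product of the target elementary factors at orders $<k$ and $\Phi_{k-1}$ of $T^{-1}$-valuation $\geq k$. I isolate the leading $T^{-k}$-coefficient $\phi_k$ of $\Phi_{k-1}$, expand every occurrence of $\gamma^{-1}$ in the formula for $\phi_k$ via $\gamma^{-1} = 1 - \gamma^{-1}\psi$ iteratively, and express $\phi_k$, modulo higher-order $T^{-1}$-corrections, as $\alpha\beta_k - \beta_k\alpha$ for a suitable $\beta_k\in R\{\alpha,\psi\}$ containing at least one $\psi$. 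Multiplying $F_{k-1}$ by $(1 + \alpha\beta_k T^{-k})(1+\beta_k\alpha T^{-k})^{-1}$ then absorbs the $\phi_k T^{-k}$-term into $F_k$ and produces a new residual $\Phi_k$ of valuation $\geq k+1$. Iterating $N-1$ times exhausts the residual modulo $T^{-N}$, and passing to determinants via multiplicativity completes the proof.

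The main obstacle is precisely the extraction of the commutator $\alpha\beta_k - \beta_k\alpha$ from $\phi_k$ at each inductive level. The displayed formula for $\delta_m$ features $\gamma^{\pm 1}$-factors interspersed with $\alpha$'s, which must be redistributed --- at the cost of higher-order corrections in $\psi$, and hence in $T^{-1}$ --- into a pure polynomial expression in $\alpha$ and $\psi$. One must verify that every corrective term so generated has strictly higher $T^{-1}$-valuation, so that it can be deferred to subsequent iterations of the induction. The generous $((C+2)(N+1))$-tangent hypothesis is designed precisely to guarantee that, throughout all such expansions up to degree $N$, the $\beta_k$'s retain contractivity above $2a + 5C$ and thus remain within the scope of Proposition~\ref{P:CommutingOperators}(2); this headroom is the essential feature distinguishing the $t$--module case from the Drinfeld module case of \cite{FGHP20}, where $d_E[t] = t$ is scalar and the $\gamma^{\pm 1}$-contamination does not appear.
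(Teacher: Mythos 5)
Your strategy is the paper's: write $1+\Delta_\gamma$ modulo $T^{-N}$ as a product of elementary factors $(1-\psi_m\alpha Z^m)(1-\alpha\psi_m Z^m)^{-1}$ with $\psi_m\in R\{\alpha,\psi\}$ of degree at most $m$ containing at least one factor of $\psi$, check via Lemma \ref{L:MonomialsNtangent} that each $\psi_m$ is a local $M_m$--contraction with $M_m>2a+5C$, and then kill each factor's determinant using Proposition \ref{P:CommutingOperators}(2) together with multiplicativity. This is exactly Taelman's Corollary~1 argument transported to the equivariant $t$--module setting, which is what the paper does, and your contractivity bookkeeping (the tangency order $(C+2)(N+1)$ leaves $M-(C+1)(m-1)>2a+5C$ for all $m<N$) matches the paper's computation.

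The one point where your write-up does not close as stated is the choice $\psi:=\gamma-1$ and the resulting treatment of $\gamma^{-1}$. You propose to eliminate the $\gamma^{-1}$'s by iterating $\gamma^{-1}=1-\gamma^{-1}\psi$ and you assert that every corrective term so generated has strictly higher $T^{-1}$--valuation. That is not what happens for this particular expansion: each iteration raises the degree in $\psi$, not the power of $T^{-1}$, so the expansion never terminates modulo $T^{-N}$ and the residual $\gamma^{-1}\psi^{j+1}$--tails cannot be deferred to later steps of the induction on the $T^{-1}$--order. The standard repair, and the one the paper uses, is to set $\psi:=\gamma^{-1}-1$ instead; this is a local contraction of exactly the same order (since $\gamma$ restricts to a bijective isometry on small enough $t^{-i}\cO_{K_\infty}^n$), and then $\gamma^{-1}=1+\psi$ holds exactly, so that $1+\Delta_\gamma=(1-(1+\psi)\alpha Z)(1-\alpha(1+\psi)Z)^{-1}$ is already a ratio of elements of $R\{\alpha,\psi\}[[Z]]$ and the inductive factorization proceeds entirely inside that algebra with no infinite expansions. (Alternatively one could carry the $\gamma^{\pm1}$'s along as isometric factors and redo the norm estimates of Lemma \ref{L:MonomialsNtangent} for such expressions, but there is no need.) With that substitution your argument coincides with the paper's proof.
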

\begin{proof} We use the main ideas in the proof of Corollary 1 in \cite{T12}. Let $Z:=T^{-1}$.
Let $\alpha:=d_E[t]\gamma$ and $\psi:=(\gamma^{-1}-1)$, viewed as a continuous, $R$--linear endomorphism of $V$. Then, we have
$$1+\Delta_\gamma=\frac{1-(\psi+1)\alpha\cdot Z}{1-\alpha(\psi+1)\cdot Z}.$$
Now, since $\gamma^{-1}$ is $((C+2)(N+1))$--tangent to the identity, $\psi$ is a local $((C+2)(N+1)+a+2C)$--contraction (see Definition \ref{D:N-tangent}(2)).
As in the proof of Cor. 1 \cite{T12}, one writes
$$\frac{1-(\psi+1)\alpha\cdot Z}{1-\alpha(\psi+1)\cdot Z}\mod Z^N=\prod_{m=1}^{N-1}\left(\frac{1-\psi_m\alpha\cdot Z^m}{1-\alpha\psi_m\cdot Z^m}\right)\mod Z^N,$$
where the $\psi_m$'s are uniquely determined polynomials in $R\{\alpha, \psi\}$ of degree at most $m$, containing at least one factor of $\psi$.
According to Lemma \ref{L:MonomialsNtangent}, since
\[((C+2)(N+1)+a+2C) - (C+1)(m-1) \geq ((C+2)(N+1)+a+2C) - (C+1)(N-2)> 2a+5C,\]
we conclude that $\psi_m$ is a local $M_m$--contraction on $V$, with $M_m>2a+5C$, for all $m<N$.
Now, we may apply Proposition \ref{P:CommutingOperators}(2) to $\alpha$, $\psi:=\psi_m$ and $M_m$ to conclude that
$${\rm det}_{R[[Z]]/Z^N}(1+\Delta_\gamma\,|\,V[[Z]]/Z^N)=\prod_{m=1}^{N-1}{\rm det}_{R[[Z]]/Z^N}\left(\frac{1-\psi_m\alpha\cdot Z^m}{1-\alpha\psi_m\cdot Z^m}\,\bigg|\,V[[Z]]/Z^N\right)=1.$$
\end{proof}

\begin{lemma}[Independence of $\gamma$]\label{L:Independence of Gamma}
Let $M_1$ and $M_2$ be modules from the class $\mathcal C$ and let $\gamma_1,\gamma_2:M_1\to M_2$ be be two $R$-linear, continuous isomorphisms which are $((C+2)(N+1))$--tangent to the identity for some $N\geq a$.  Then
\[\det\nolimits_{R[[T\inv]]/T^{-N}}(1+\Delta_{\gamma_1}|M_1) = \det\nolimits_{R[[T\inv]]/T^{-N}}(1+\Delta_{\gamma_2}|M_1).\]
\end{lemma}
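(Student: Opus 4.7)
The strategy is to convert the problem into a single application of Corollary~\ref{C:det=1} by composing $\gamma_1$ with $\gamma_2^{-1}$. Set $\tilde\beta := \gamma_1\circ\gamma_2^{-1}:M_2\to M_2$, an $R$-linear, continuous automorphism. I first verify that $\tilde\beta$ is $((C+2)(N+1))$-tangent to the identity. Writing $f_i := \iota_2^{-1}\circ\gamma_i\circ\iota_1$ for the bijective isometries of $t^{-i}\cO_{K_\infty}^n$ furnished by Definition~\ref{D:N-tangent}, one directly checks that $\iota_2^{-1}\circ\tilde\beta\circ\iota_2 = f_1\circ f_2^{-1}$. For $x\in t^{-i}\cO_{K_\infty}^n$, setting $y := f_2^{-1}(x)$ (so that $\|y\| = \|x\|$ since $f_2$ is an isometry), the ultrametric inequality gives
\[\|(f_1\circ f_2^{-1})(x)-x\| = \|f_1(y) - f_2(y)\| \leq \max\bigl(\|f_1(y)-y\|,\|f_2(y)-y\|\bigr) \leq \|t\|^{-N-a-2C}\|x\|,\]
which is exactly the tangency bound required of $\tilde\beta$.

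Next, using the relation $\gamma_1 = \tilde\beta\gamma_2$, an algebraic simplification of the quotient of the two operators on $M_1[[T^{-1}]]$ yields the key identity of nuclear endomorphisms
\[(1+\Delta_{\gamma_1})\cdot(1+\Delta_{\gamma_2})^{-1} = (1-\gamma_1^{-1}t\gamma_1 T^{-1})(1-\gamma_2^{-1}t\gamma_2 T^{-1})^{-1} = \gamma_2^{-1}\circ(1+\Delta_{\tilde\beta})\circ\gamma_2.\]
Because $\gamma_2:M_1\simeq M_2$ is a topological $R$-linear isomorphism, the pullback via $\gamma_2$ of any basis of open neighborhoods of $0$ in $M_2$ satisfying Definition~\ref{D:OpenNeighborhood} is such a basis in $M_1$, and the endomorphisms induced on the respective finite quotients by $\gamma_2^{-1}(1+\Delta_{\tilde\beta})\gamma_2$ and $(1+\Delta_{\tilde\beta})$ are conjugate by $\gamma_2$. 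Combined with the Independence of Basis of Open Neighborhoods (Proposition~\ref{P:DeterminantFacts}(3)), this shows that conjugation by $\gamma_2$ preserves nuclear determinants, so
\[\det\nolimits_{R[[T^{-1}]]/T^{-N}}\!\bigl((1+\Delta_{\gamma_1})(1+\Delta_{\gamma_2})^{-1}\,\big|\,M_1\bigr) = \det\nolimits_{R[[T^{-1}]]/T^{-N}}\!\bigl(1+\Delta_{\tilde\beta}\,\big|\,M_2\bigr).\]
Corollary~\ref{C:det=1} applied to $\tilde\beta$ forces the right-hand side to equal $1$, and the multiplicativity of nuclear determinants (Proposition~\ref{P:DeterminantFacts}(1)) then gives the desired equality.

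The one technicality that requires care is that Corollary~\ref{C:det=1} is formulated only for the special object $V = \Lie_E(K_\infty)/\Lambda$, while here we need it for a general $M_2 \in \cC$. This extension is routine: the basis $\{\bU_{i,\infty}\}_{i\geq\ell}$ for $M_2$ constructed in Definition~\ref{D:UNeighborhoods}(2) consists of neighborhoods sitting entirely inside the $\Lie_E$-part of $M_2$, so the norm estimates and nucleus arguments in Proposition~\ref{P:CommutingOperators} and Lemma~\ref{L:MonomialsNtangent}---and hence the telescoping argument in the proof of Corollary~\ref{C:det=1} itself---go through unchanged in the presence of the finite quotient $H_2$. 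This is really the only point in the proof that is not a purely formal manipulation.
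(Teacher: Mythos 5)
Your proof is correct and follows the same route as the paper's: it factors $(1+\Delta_{\gamma_1}) = [\gamma_2^{-1}(1+\Delta_{\gamma_1\gamma_2^{-1}})\gamma_2]\cdot(1+\Delta_{\gamma_2})$, notes that conjugation by $\gamma_2$ preserves nuclear determinants, and applies Corollary \ref{C:det=1} to $\gamma_1\gamma_2^{-1}$ on $M_2$. You are in fact more explicit than the paper on two points it leaves implicit, namely the ultrametric verification that $\gamma_1\gamma_2^{-1}$ is again $((C+2)(N+1))$--tangent to the identity, and the extension of Corollary \ref{C:det=1} from the special case $V=\Lie_E(K_\infty)/\Lambda$ to a general $M_2\in\cC$.
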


\begin{proof}
The proof is identical to the proof of \cite[Lemma 5.3.4]{FGHP20}.  We sketch a few of the details here for the benefit of the reader.  First we write
\begin{equation}\label{E:Delta-compose}(1+\Delta_{\gamma_1})=[\gamma_2^{-1}(1+\Delta_{\gamma_1\gamma_2^{-1}})\gamma_2]\cdot(1+\Delta_{\gamma_2}).\end{equation}
Thus, by exactness of determinants it suffices to show that
\[\det\nolimits_{R[[T\inv]]/T^{-N}}(1+\Delta_{\gamma_1\gamma_2^{-1}}\,|\,M_2)=1,\]
which follows from Corollary \ref{C:det=1} applied to $V=M_2$ and $\gamma = \gamma_1\gamma_2\inv$.
\end{proof}
\medskip

Next, we state and prove the volume formula. This is a generalization to the entire Arakelov class $\mathcal C$  of the fact that if $\gamma:H_1\simeq H_2$ is an $R$--linear isomorphism of {\it finite}, projective $R[t]$--modules (i.e. {\it finite} objects in class $\mathcal C$), then
\[\det\nolimits_{\power{R}{T\inv}}\left (1 + \Delta_\gamma \big|H_1\right )\bigg\vert_{T=t} = \frac{|H_2|_G}{|H_1|_G}.\]
(See \S5.3 of \cite{FGHP20} for a proof of the above formula.)

\begin{theorem}[Volume formula]\label{T:VolumeFormula}
Let $M_1$ and $M_2$ be modules form the class $\mathcal C$ and let $\gamma:M_1\to M_2$ be an $R$-linear, continuous isomorphism which is infinitely tangent to the identity, and assume that $M_2 = \Lie_E(K_\infty)/\Lambda_2$, for a projective $R[t]=A[G]$--lattice $\Lambda_2$ in $\Lie_E(K_\infty)$. Then
\[\det\nolimits_{\power{R}{T\inv}}\left (1 + \Delta_\gamma \big|M_1\right )\bigg\vert_{T=t} = \frac{\Vol(M_2)}{\Vol(M_1)}.\]
\end{theorem}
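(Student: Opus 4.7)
The plan is to reduce the theorem to the already--established finite--module case via the multiplicativity of nuclear determinants (Proposition~\ref{P:DeterminantFacts}(2)), exploiting the freedom in choosing $\gamma$ given by Lemma~\ref{L:Independence of Gamma}. As setup, I would pick a free $A[G]$--lattice $\Lambda'\subseteq \Lie_E(K_\infty)$ containing both $\Lambda_1$ and $\Lambda_2$ and $(M_1,s_1)$--admissible for some $A$--linear section $s_1:H_1\to M_1$ (existence by Proposition~\ref{P:Admissible} together with the enlargement arguments in its proof). This produces parallel short exact sequences of compact $A[G]$--modules
\[
0\to N_s\to M_s\to\Lie_E(K_\infty)/\Lambda'\to 0,\quad s=1,2,
\]
with $N_1:=\Lambda'/\Lambda_1\times s_1(H_1)$ and $N_2:=\Lambda'/\Lambda_2$ both finite and $G$--c.t. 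Since $\Lambda'$ is admissible for both $M_1$ and $M_2$, the Volume Definition yields $\Vol(M_s)=|N_s|_G/[\Lambda':\Lambda_0]_G$, so the common normalizing index cancels in the quotient $\Vol(M_2)/\Vol(M_1)=|N_2|_G/|N_1|_G$.

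The next step is to construct an $R$--linear, continuous isomorphism $\gamma_0:M_1\iso M_2$ that is infinitely tangent to the identity, maps $N_1$ onto an $A$--stable finite submodule $T\subseteq M_2$ with $|T|_G=|N_2|_G$, and descends to the identity on the common quotient $\Lie_E(K_\infty)/\Lambda'$. The infinitely--tangent condition is enforced by requiring $\gamma_0$ to coincide with the canonical identification $\iota_1(\bU_{i,\infty})=\iota_2(\bU_{i,\infty})$ on a sufficiently small open neighborhood of $0$, in which case Proposition~\ref{P:Ntangpowerseries} realizes $\gamma_0$ as arising from an everywhere convergent power series on $K_\infty^n$. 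With such a $\gamma_0$ in hand, $\Delta_{\gamma_0}$ preserves $N_1$ (since $\gamma_0(N_1)=T$ is $A$--stable in $M_2$) and vanishes on $\Lie_E(K_\infty)/\Lambda'$ (by the identity condition on the quotient). Proposition~\ref{P:DeterminantFacts}(2) then collapses the nuclear determinant to
\[
\det\nolimits_{\power{R}{T^{-1}}}(1+\Delta_{\gamma_0}\mid M_1)=\det\nolimits_{\power{R}{T^{-1}}}(1+\Delta_{\gamma_0}\mid N_1),
\]
and the finite--case formula (recalled just before the theorem) applied to the $R$--linear isomorphism $\gamma_0|_{N_1}:N_1\iso T$ evaluates the right--hand side at $T=t$ as $|T|_G/|N_1|_G=|N_2|_G/|N_1|_G$, matching the volume quotient. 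An application of Lemma~\ref{L:Independence of Gamma} transfers the identity back to the original $\gamma$.

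The principal technical obstacle is the construction of $\gamma_0$ with all three compatibility properties imposed simultaneously: the existence of an $A$--stable submodule $T\subseteq M_2$ that is $R$--isomorphic to $N_1$ requires a careful analysis of the finite projective $\Fq[G]$--module structures arising inside $M_2=\Lie_E(K_\infty)/\Lambda_2$ (potentially with a further enlargement of $\Lambda'$), and the simultaneous lifting of the prescribed behaviors (identity on a small neighborhood of $0$, fixed $R$--isomorphism $N_1\iso T$, identity on the quotient $\Lie_E(K_\infty)/\Lambda'$) to a single convergent power series on $K_\infty^n$ requires the patching arguments analogous to those in \S5.3 of \cite{FGHP20}. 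The extra norm contributions from the nilpotent part of $d_E[t]$ are already absorbed by Lemma~\ref{L:Containments} and the estimates established in Proposition~\ref{P:CommutingOperators} earlier in this section.
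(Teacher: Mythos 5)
Your argument reproduces, in essence, the first half of the paper's proof (the ``restricted case''), but it rests on an assumption that fails in general: you begin by choosing a single $A[G]$--lattice $\Lambda'$ containing \emph{both} $\Lambda_1$ and $\Lambda_2$. Two $A[G]$--lattices in $\Lie_E(K_\infty)$ need not be commensurable, so no such common over-lattice need exist. The situation is the same as for $\ZZ$ and $\alpha\ZZ$ in $\RR$ with $\alpha$ irrational: here $A$ and $\alpha A$ inside $k_\infty$, for $\alpha\in k_\infty$ not in $k^\times$ times a unit, generate a non-discrete $A$--submodule, so they are contained in no common lattice. Since $M_1$ is an arbitrary object of $\cC$ and $\gamma$ an arbitrary isomorphism infinitely tangent to the identity, $\Lambda_1$ and $\Lambda_2$ are in general incommensurable, and your reduction to the finite-module formula via Proposition~\ref{P:DeterminantFacts}(2) cannot even be set up. The paper's proof devotes its entire second half to exactly this point: one writes the change-of-basis matrix between free admissible lattices $\tilde\Lambda_1,\tilde\Lambda_2$ as $X=B\cdot X_0$ with $B\in\GL_m(\Fq(t)[G])$ and $X_0\in 1+t^{-(C+2)(N+1)}\Mat_m(\Fq[[t^{-1}]][G])$, pushes out the structural sequence of $M_1$ along the $k_\infty[G]$--linear map $\phi_{X_0}$ to obtain $M_1'$ whose lattice $\phi_{X_0}(\Lambda_1)$ \emph{is} commensurable with $\Lambda_2$ (the remaining transition matrix $B$ has entries in $k[G]$), applies the restricted case to $\gamma\circ\phi^{-1}$, and then checks that $\Vol(M_1)/\Vol(M_1')=\det(X_0)\equiv 1 \bmod t^{-N}$ before letting $N\to\infty$. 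Without this step your proof is incomplete.

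A secondary, smaller point: in the restricted case you do not need to \emph{construct} an $A$--stable target $T\subseteq M_2$ with $|T|_G=|N_2|_G$ --- which you flag as the principal obstacle --- because the natural choice is $T=N_2=\Lambda'/\Lambda_2$ itself, and the required $R$--isomorphism $\xi:N_1\simeq N_2$ is \emph{derived} from the given $\gamma$ by splitting $\Lie_E(K_\infty)=\Lambda'\oplus\cU$ as $R$--modules for a suitable open projective $\cU$ and cancelling $\cU$ from $M_1\cong\cU\oplus N_1\simeq\cU\oplus N_2\cong M_2$. One then takes $\rho$ to be the identity on $\cU$ and $\xi$ on $N_1$ (which is automatically infinitely tangent to the identity), and invokes Lemma~\ref{L:Independence of Gamma} to replace $\gamma$ by $\rho$; no power-series patching in the sense of Proposition~\ref{P:Ntangpowerseries} is needed. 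So the obstacle you identify is not the real one; the real one is incommensurability of the lattices.
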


\begin{proof}
This proof follows nearly identically to that of \cite[Thm. 5.3.2]{FGHP20}.  For the convenience of the reader, we give a sketch of the proof here and refer the reader to loc.cit. for more details.  We remind the reader that $M_i$ fits in the structural exact sequence \eqref{E:StructuralSequenceM_i}.

We first establish the equality above $\mod t^{-N}$ in the restricted case that the $A[G]$-lattices $\Lambda_1$ and $\Lambda_2$ are both contained in a $(M_1,s_1)$--admissible $A[G]$-lattice $\Lambda\subset \Lie_E(K_\infty)$, but with the milder requirement that $\gamma$ is merely $((C+2)(N+1))$--tangent to the identity, for some $N\geq a$.  In this case, as in loc.cit., we pick an $R$-projective, open submodule $\cU$ of $\Lie_E(K_\infty)$, such that we have an equality of $R$-modules
\[\Lie_E(K_\infty) = \Lambda \oplus \cU.\]
Note that for such a $\cU$ the map $\gamma$ is an $R$-linear isomorphism
\[\gamma: M_1=\cU\oplus(\Lambda/\Lambda_1\times s_1(H_1))\simeq  \cU\oplus \Lambda/\Lambda_2=M_2.\]
This implies, as in loc.cit., that there is an $R$--module isomorphism
$$\xi: (\Lambda/\Lambda_1\times s_1(H_1))\simeq \Lambda/\Lambda_2.$$
Then, we fix an $R$--module isomorphism $\rho: M_1\to M_2$ such that $\rho$ is the identity on $\cU$ and it restricts to $\xi$ on $(\Lambda/\Lambda_1\times s_1(H_1))$.  Since $\gamma$ is $((C+2)(N+1))$--tangent to the identity and $\rho$ is infinitely tangent to the identity, Lemma \ref{L:Independence of Gamma} gives
\[{\rm det}_{R[[T^{-1}]]/T^{-N}}(1+\Delta_\gamma\,|\, M_1)={\rm det}_{R[[T^{-1}]]/T^{-N}}(1+\Delta_\rho\,|\, M_1).\]
Now, we get a commutative diagram of topological morphisms of modules in the class $\mathcal C$
$$\xymatrix{
0\ar[r] & (\Lambda/\Lambda_1\times s_1(H_1))\ar[r]\ar[d]^{}_{\wr}^{\xi} & M_1\ar[r]\ar[d]^{\rho}_{\wr} & K_\infty/\Lambda\ar[r]\ar[d]^{{\rm id}}_{=} & 0 \\
0\ar[r] & \Lambda/\Lambda_2\ar[r] & M_2\ar[r] & K_\infty/\Lambda\ar[r] & 0,}$$
from which we conclude that
\begin{eqnarray}
  \nonumber {\rm det}_{R[[T^{-1}]]/T^{-N}}(1+\Delta_\rho\,|\, M_1)\bigg\vert_{T=t}&=&{\rm det}_{R[[T^{-1}]]/T^{-N}}(1+\Delta_\xi\,|\,\Lambda/\Lambda_1\times s_1(H_1))\bigg\vert_{T=t} \\
   \nonumber &=&\frac{|(\Lambda/\Lambda_1\times s_1(H_1))|_G}{|\Lambda/\Lambda_2|_G}=\frac{{\rm Vol}(M_2)}{{\rm Vol}(M_1)}\mod t^{-N}.
\end{eqnarray}
This concludes the proof of our equality $\mod t^{-N}$ in the restrictive case.
\medskip

Now, we prove the general case.  Fix $N>a$, as above. For $i=1,2$, fix $A[G]$--free admissible lattices $\tilde\Lambda_i\supset \Lambda_i$ and set $X\in \GL_m(k_\infty[G])$ to be the change of basis matrix between two fixed $A[G]$--bases ${\bf e}_1$ and ${\bf e}_2$ for $\tilde \Lambda_1$ and $\tilde \Lambda_2$, respectively.  Then, we factor $X$ (see loc.cit.) as
\[X=B\cdot X_0,\qquad   X_0\in (1+t^{-((C+2)(N+1))}\Mat_m(\Fq[[t^{-1}]][G]),\quad B\in{\rm GL}_m(\Fq(t)[G]),\]
and let $\phi_{X_0}:\Lie_E(K_\infty) \to \Lie_E(K_\infty)$ be the $k_\infty[G]$-linear isomorphism given by $X_0$ in the $k_\infty[G]$--bases ${\bf e_1}$ and ${\bf e_2}$ of $\Lie_E(K_\infty)$.  This map, together with the structural exact sequence for $M_1$ induces the commutative diagram of objects in the Arakelov class $\mathcal C$
\begin{equation}\label{E:push-out}
\xymatrix{
0\ar[r] & \Lie_E(K_\infty)/\Lambda_1\ar[r]\ar[d]^{\phi_{X_0}}_{\wr} & M_1\ar[r]^{}\ar[d]^{\phi}_{\wr} & H_1\ar[r]\ar[d]^{{\rm id}}_{=} & 0 \\
0\ar[r] & \Lie_E(K_\infty)/\phi_{X_0}(\Lambda_1)\ar[r] & M_1'\ar[r]^{} & H_1\ar[r] & 0,
}
\end{equation}
where the bottom exact sequence is the push-out along $\phi_{X_0}$ of the upper one and $\phi$ is the map induced by $\phi_{X_0}$.  It follows that $\phi:M_1\to M_1'$ is $((C+2)(N+1))$-tangent to the identity and thus $\gamma\circ \phi\inv:M_1'\to M_2$ is as well.  Since $\phi_{X_0}(\Lambda_1)$ and $\Lambda_2$ are contained in a common lattice (because $\phi_{X_0}(\Lambda_1)$ and $\tilde\Lambda_2$ are, as the transition matrix between their $A[G]$--bases $\phi_{X_0}({\bf e_1})$ and ${\bf e_2}$ is $B\in {\rm GL}_m(k[G])$), we apply our result in the restrictive case to conclude that
\[{\rm det}_{R[[T^{-1}]]/T^{-N}}(1+\Delta_{\gamma\circ\phi^{-1}}\,|\, M'_1)|_{T=t} = \frac{{\rm Vol}(M_2)}{{\rm Vol}(M'_1)}\mod t^{-N}.\]
Now, since $\phi$ is $R[t]$-linear (i.e. $A[G]$--linear), we have $(1+\Delta_{\phi^{1}})=1$. Therefore, \eqref{E:Delta-compose} combined with the above equality gives
\[{\rm det}_{R[[T^{-1}]]/T^{-N}}(1+\Delta_{\gamma}\,|\, M_1)|_{T=t} = \frac{{\rm Vol}(M_2)}{{\rm Vol}(M'_1)}\mod t^{-N}.\]
Now, just as in loc.cit, it is easy to see that
\[\frac{{\rm Vol}(M_1)}{{\rm Vol}(M'_1)}={\rm det}(X_0)\equiv 1\mod t^{-N}.\]
Combining the last two equalities and taking the limit as $N\to \infty$ concludes the proof.
\end{proof}

\section{Main Results}\label{main-results-section}
\subsection{The Equivariant Tamagawa Number Formula for $t$--modules}\label{ETNF-section} In this section, we prove the results linking the special values at $s=0$ of the $L$--functions
associated to an abelian $t$--module $E$ as above and the volumes of the relevant objects in the Arakelov class $\mathcal C$. These are perfect generalizations to abelian $t$--modules of the main results in \cite{FGHP20}. (See \S6.1 in loc.cit.)

\begin{theorem}[ETNF for $t$-modules]\label{T:ETNFtMods}
Let $\cM$ be a taming module for $\cO_K/\cO_F$ and let $E$ be an abelian $t$-module of structural morphism
$\phi_E:A\to{\text M}_n(\cO_F)\{\tau\}$. Then, we have the following equality in $(1+t^{-1}\F_q[[t^{-1}]][G])$,
\[\Theta_{K/F}^{E, \cM}(0)=\frac{{\rm Vol}(E(K_\infty)/E(\cM))}{{\rm Vol}(\Lie_E(K_\infty)/\Lie_E(\cM))}.\]
\end{theorem}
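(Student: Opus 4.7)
The plan is to interpret $\Theta_{K/F}^{E,\cM}(0)$ as a nuclear determinant via Corollary \ref{C:CorToTraceFormula}, relate it to $\det(1+\Delta_\gamma)^{-1}$ for the tautological identification $\gamma$ between the underlying topological $\F_q[G]$-modules of $E(K_\infty)/E(\cM)$ and $\Lie_E(K_\infty)/\Lie_E(\cM)$, and conclude with the Volume Formula (Theorem \ref{T:VolumeFormula}).

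Set $N_1:=E(K_\infty)/E(\cM)$ and $N_2:=\Lie_E(K_\infty)/\Lie_E(\cM)$. Both belong to the Arakelov class $\cC$: for $N_2$ the structural sequence is tautological, with $\iota_2=\mathrm{id}$, $\Lambda_2=\Lie_E(\cM)$ (projective over $A[G]$ by Lemma \ref{A-free-Lemma}(2)), and trivial finite part; for $N_1$ it is the exponential sequence \eqref{hm-ho-equation} with $\iota_1$ induced by $\Exp_E$, $\Lambda_1=\Exp_E^{-1}(E(\cM))$, and finite part $H(E/\cM)$. Since $E(K_\infty)$ and $\Lie_E(K_\infty)$ share the underlying $\F_q[G]$-module $K_\infty^n$, both $N_1$ and $N_2$ coincide with $(K_\infty/\cM)^n$ as topological $R$-modules; let $\gamma\colon N_1\to N_2$ be this identification, a continuous $\F_q[G]$-module isomorphism.

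To show that $\gamma$ is infinitely tangent to the identity I would apply Proposition \ref{P:Ntangpowerseries} with $\Gamma:=\Exp_E$. The power series $\Exp_E$ lies in $\mathrm{Id}_n+\bz\cdot M_n(\cO_F)[[\bz]]$ and converges everywhere on $\CC_\infty^n$; the containment $\Gamma(\Lambda_1)\subseteq \Lambda_2$ is automatic because $\Exp_E(\Exp_E^{-1}(E(\cM)))\subseteq E(\cM)=\Lie_E(\cM)$ as subsets of $K_\infty^n$; and the equality $\iota_2^{-1}\circ\gamma\circ\iota_1=\widetilde\Gamma$ holds on a small enough neighborhood $t^{-\ell}\cO_{K_\infty}^n$ by unwinding the definitions of $\iota_1$, $\iota_2$, and $\gamma$. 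Next, since $t$ acts on $N_1$ via $\phi_E(t)$, on $N_2$ via $d_E[t]$, and $\gamma$ is the identity on the common underlying set, the operator $\gamma^{-1}t\gamma$ acts on $N_1$ as $d_E[t]$; therefore
\[1+\Delta_\gamma=\frac{1-d_E[t]T^{-1}}{1-\phi_E(t)T^{-1}}=(1+\Phi)^{-1},\]
where $\Phi=(1-\phi_E(t)T^{-1})(1-d_E[t]T^{-1})^{-1}-1$ is the operator of Corollary \ref{C:CorToTraceFormula}.

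Combining the Volume Formula (Theorem \ref{T:VolumeFormula}) applied to $\gamma\colon N_1\to N_2$ with the multiplicativity of nuclear determinants (Proposition \ref{P:DeterminantFacts}(1)) gives
\[\det\nolimits_{R[[T^{-1}]]}(1+\Phi\mid N_1)\big|_{T=t}=\det\nolimits_{R[[T^{-1}]]}(1+\Delta_\gamma\mid N_1)^{-1}\big|_{T=t}=\frac{\Vol(N_1)}{\Vol(N_2)},\]
which, together with Corollary \ref{C:CorToTraceFormula} identifying the left-hand side with $\Theta_{K/F}^{E,\cM}(0)$, yields the theorem. The main technical hurdle I anticipate is the compatibility of open-neighborhood bases: Corollary \ref{C:CorToTraceFormula} computes its nuclear determinant on $(K_\infty/\cM)^n$ using the basis of Definition \ref{D:UNeighborhoods}(3) with $S=S_\infty$, while the Volume Formula computes on $N_1\in\cC$ using the basis $\{\Exp_E(\bU_{i,\infty})\}$ from Definition \ref{D:UNeighborhoods}(2). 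These two bases differ, but should satisfy the hypotheses of Proposition \ref{P:DeterminantFacts}(3), since $\Exp_E$ is arbitrarily close to the identity on small enough neighborhoods; verifying this comparison carefully, in the spirit of the norm estimates of Lemma \ref{L:Containments}, is the delicate step of the proof.
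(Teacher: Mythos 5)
Your proposal is correct and follows essentially the same route as the paper's own proof: both take $\gamma$ to be the tautological $\F_q[G]$-linear identification of $E(K_\infty)/E(\cM)$ with $\Lie_E(K_\infty)/\Lie_E(\cM)$, use Proposition \ref{P:Ntangpowerseries} with $\Gamma=\Exp_E$ to see that $\gamma$ is infinitely tangent to the identity, observe that $1+\Delta_\gamma=(1+\Phi)^{-1}$ with $\Phi$ as in Corollary \ref{C:CorToTraceFormula}, and conclude by combining the Volume Formula with that corollary. The compatibility of neighborhood bases you flag at the end is a genuine point the paper passes over silently, but it is harmless: for $i$ large, $\Exp_E(\bz)-\bz$ lies in $t^{-a-i}\cO_{K_\infty}^n\subseteq\bU_{i,\infty}$ by \eqref{E:U-inclusions}, so the two bases in fact coincide eventually.
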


\begin{proof}
Note that we have exact sequences
\begin{equation}\label{E:ExactSequenceM_1}
0\to \Lie_E(K_\infty)/\Exp_E\inv(E(\cM)) \overset{\Exp_E}\longrightarrow E(K_\infty)/E(\cM)\to H(E/\cM)\to 0,
\end{equation}
and
\begin{equation}\label{E:ExactSequenceM_2}
0\to \Lie_E(K_\infty)/\Lie_E(\cM)\overset{\iota_2={\rm id}}\longrightarrow\Lie_E(K_\infty)/\Lie_E(\cM)\to 0\to 0,
\end{equation}
and thus $\Lie_E(K_\infty)/\Lie_E(\cM)$ (trivially) and $E(K_\infty)/E(\cM)$ are both in the Arakelov class $\cC$ (see Definition \ref{D:classC}.) Let $\gamma: E(K_\infty)/E(\cM) \to \Lie_E(K_\infty)/\Lie_E(\cM)$ be the continuous $\F_q[G]$-module isomorphism given by the identity. Note that this map is \textit{not} $t$-linear, since the domain and image modules have different $t$-actions. Then, as in Definition \ref{D:N-tangent}, define $\tilde\gamma = \iota_2\inv\circ \,\gamma \circ \Exp_E$ and note that $\tilde\gamma: K_\infty^n/\Exp_E\inv(\cM)\to K_\infty^n/\Lie_E(\cM)$ is equal to the map induced by $\Exp_E$. Then, since $\Exp_E:K_\infty^n \to K_\infty^n$ is given by an everywhere convergent power series, Proposition \ref{P:Ntangpowerseries} implies that $\gamma$ is infinitely tangent to the identity. Thus, by Theorem \ref{T:VolumeFormula} we get
\[\det\nolimits_{\F_q[G][[T\inv]]}(1+\Delta_\gamma |E(K_\infty)/E(\cM))\bigg|_{T=t} = \frac{\Vol(\Lie_E(K_\infty)/\Lie_E(\cM))}{\Vol(E(K_\infty)/E(\cM))}.\]
By the definition of $\Delta_\gamma$ from \eqref{D:Deltagamma}, we can rewrite the above equality as
\[{\rm det}_{\F_q[G][[T\inv]]}\left(\frac{1-d_E[t]\cdot T^{-1}}{1-\varphi_E(t)\cdot T^{-1}}\,\bigg|\,\frac{ K_\infty^n}{\cM^n}\,\right )\Bigg|_{T=t}=\frac{\Vol(\Lie_E(K_\infty)/\Lie_E(\cM))}{\Vol(E(K_\infty)/E(\cM))}.\]
Finally, Corollary \ref{C:CorToTraceFormula} gives
\[\Theta_{K/F}^{E, \cM}(0)=\frac{{\rm Vol}(E(K_\infty)/E(\cM))}{{\rm Vol}(\Lie_E(K_\infty)/\Lie_E(\cM))}.\]
\end{proof}

As in \cite[Cor. 6.1.2]{FGHP20}, if $p\nmid |G|$,  we get a corollary to the preceding theorem, also obtained by Fang with different methods in \cite{Fang22}. (See Theorem 1.12 in loc.cit.)

\begin{corollary}\label{main-theorem-not-p}
If $p\nmid |G|$, then we have the following equality in $(1+t^{-1}\F_q[[t^{-1}]][G])$:
$$\Theta_{K/F}^E(0)=[\Lie_E(\cO_K): {\rm exp}^{-1}_E(E(\cO_K))]_G\cdot |H(E/\cO_K)|_G.$$
\end{corollary}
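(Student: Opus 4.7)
\medskip

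\noindent\textbf{Proof plan.} The plan is to apply Theorem \ref{T:ETNFtMods} with the taming module $\cM=\cO_K$ and then compute each of the two volumes in closed form. The hypothesis $p\nmid|G|$ will enter at exactly two places: it makes $\cO_K$ itself a taming module (so nothing is lost when we specialize $\cM=\cO_K$), and it makes every finitely generated $\cO_F$--projective $\cO_F[G]$--module automatically $\cO_F[G]$--projective and every finite $A[G]$--module $G$--c.t. In particular, $\cO_K$ is $\cO_F[G]$--projective (hence a taming module, with $\cO_K/\cM=0$), so $\Theta_{K/F}^{E,\cO_K}(0)=\Theta_{K/F}^E(0)$ and Theorem \ref{T:ETNFtMods} yields
\[\Theta_{K/F}^E(0)=\frac{\Vol(E(K_\infty)/E(\cO_K))}{\Vol(\Lie_E(K_\infty)/\Lie_E(\cO_K))}.\]
By Lemma \ref{A-free-Lemma}(2), $\Lie_E(\cO_K)$ is $A[G]$--projective. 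Applying the same $G$--c.t. reasoning to the finite cokernel $E(\cO_K)/\Exp_E(\Exp_E^{-1}(E(\cO_K)))$ (whose finiteness is already known from the discussion leading to \eqref{hm-ho-equation}), one sees that $\Exp_E^{-1}(E(\cO_K))$ is likewise an $A[G]$--projective lattice in $\Lie_E(K_\infty)$.

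Now I compute the two volumes against the fixed normalizing lattice $\Lambda_0$. For the denominator, $\Lie_E(K_\infty)/\Lie_E(\cO_K)$ is in $\cC$ with trivial finite part $H=0$, so I may use the admissible lattice $\Lambda'=\Lie_E(\cO_K)$ itself (there is nothing to quotient) and the volume formula \eqref{D:Vol} reduces to
\[\Vol(\Lie_E(K_\infty)/\Lie_E(\cO_K))=\frac{1}{[\Lie_E(\cO_K):\Lambda_0]_G}.\]
For the numerator, the structural short exact sequence for $M=E(K_\infty)/E(\cO_K)$ has finite part $H(E/\cO_K)$ and kernel lattice $\Lambda=\Exp_E^{-1}(E(\cO_K))$. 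Since $p\nmid|G|$, any $A$--linear section $s$ of $M\to H(E/\cO_K)$ (which exists by the $A$--injectivity argument recorded after Definition \ref{D:classC}) can be averaged over $G$ via $s'(h):=\tfrac{1}{|G|}\sum_{g\in G}g\cdot s(g^{-1}h)$ (legitimate since $|G|$ is a unit in the characteristic $p$ ring $A$) to obtain an $A[G]$--linear section. With this choice the admissibility conditions of Definition \ref{D:Admissible lattice} are satisfied by $\Lambda'=\Lambda$ itself, so $\Lambda'/\Lambda\times s'(H)\cong H(E/\cO_K)$ as $A[G]$--modules, yielding
\[\Vol(E(K_\infty)/E(\cO_K))=\frac{|H(E/\cO_K)|_G}{[\Exp_E^{-1}(E(\cO_K)):\Lambda_0]_G}.\]

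Finally, dividing and invoking the multiplicativity of the projective lattice index
\[\frac{[\Lie_E(\cO_K):\Lambda_0]_G}{[\Exp_E^{-1}(E(\cO_K)):\Lambda_0]_G}=[\Lie_E(\cO_K):\Exp_E^{-1}(E(\cO_K))]_G,\]
(which one deduces from the definition by embedding all three projective lattices into a common free $A[G]$--super-lattice and unwinding the $G$--sizes of the finite quotients) produces exactly the claimed identity. The main technical obstacle is keeping the lattice index well defined on projective (as opposed to free) lattices and verifying multiplicativity; this is the only step that is not purely formal, but it is a direct reduction to the free case via the definition recalled in \S3 and the multiplicativity of the $G$--size on short exact sequences of finite $G$--c.t. $A[G]$--modules.
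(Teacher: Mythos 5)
Your proposal is correct and follows essentially the same route as the paper: specialize the ETNF to $\cM=\cO_K$ (legitimate because $p\nmid|G|$ makes $\cO_K$ itself a taming module), split the structural exact sequence for $E(K_\infty)/E(\cO_K)$ as $A[G]$--modules (your averaging of an $A$--linear section is the standard equivalent of the paper's appeal to $A[G]$--injectivity of $\Lie_E(K_\infty)/\Exp_E^{-1}(E(\cO_K))$), and read off the two volumes. The paper simply compresses the volume computation and the transitivity of the lattice index by citing the corresponding corollary in \cite{FGHP20}, whereas you spell those steps out; there is no substantive difference.
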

\begin{proof}
This follows immediately from Theorem \ref{T:ETNFtMods}. Indeed, if $p\nmid |G|$, then $\cO_K$ is a taming module for $K/F$, all the modules of \eqref{E:ExactSequenceM_1} are $\Bbb F_q[G]$--projective, and $\Lie_E(K_\infty)/{\rm Exp}^{-1}_E(E(\cO_K))$ is $A[G]$--injective.
Thus, the exact sequence in question splits in the category of $A[G]$--modules. The corollary follows immediately. (See proof of \cite[Cor. 6.1.2]{FGHP20} for details.)
\end{proof}

\subsection{Refined Brumer-Stark for t-Modules}\label{BrSt-section}
The theorem below is the $t$--module analogue of the classical Brumer--Stark conjecture for class--groups of number fields. The reader is encouraged to read \S6.2 in \cite{FGHP20} for detailed comments on
the classical Brumer--Stark conjecture and its Drinfeld module analogue proved in loc.cit.
\begin{theorem}[Refined Brumer--Stark for $t$--modules]\label{T:Main-BrSt}  Let $\cM$ be taming module for $K/F$ and let $E$ be an abelian $t$-module with structural morphism $\phi_E:\Fq[t]\to\Mat_n(\cO_F)\{\tau\}$. Let $\Lambda'$ be a $E(K_\infty)/E(\cM)$--admissible $A[G]$--lattice  in $\Lie_E(K_\infty)$ (as in \ref{D:Admissible lattice}), then we have
$$\frac{1}{[\Lie_E(\cM):\Lambda']_G}\cdot \Theta_{K/F}^{E, \cM}(0)\in{\rm Fitt}^0_{A[G]}H(E/\cM).$$
\end{theorem}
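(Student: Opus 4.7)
The plan is to deduce Theorem \ref{T:Main-BrSt} directly from the ETNF for $t$--modules (Theorem \ref{T:ETNFtMods}), by computing each of the two volumes there through a careful choice of admissible lattice and then identifying the resulting $G$--size as an element of the relevant Fitting ideal. The argument is essentially a transcription of the Drinfeld module case treated in \cite[\S 6.2]{FGHP20}, adapted to the $t$--module setting via the volume and admissibility machinery already generalized in \S\ref{S:Volume}.

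Concretely, I would start by applying Theorem \ref{T:ETNFtMods} to obtain
$$\Theta_{K/F}^{E,\cM}(0) \;=\; \frac{\Vol(E(K_\infty)/E(\cM))}{\Vol(\Lie_E(K_\infty)/\Lie_E(\cM))}.$$
For the denominator, the structural exact sequence in the Arakelov class $\cC$ has trivial finite part $H=0$, and $\Lie_E(\cM)$ is itself $A[G]$--projective by Lemma \ref{A-free-Lemma}(2); it therefore serves as its own admissible lattice, yielding $\Vol(\Lie_E(K_\infty)/\Lie_E(\cM)) = 1/[\Lie_E(\cM):\Lambda_0]_G$. For the numerator, using the hypothesized $E(K_\infty)/E(\cM)$--admissible lattice $\Lambda'$ with a corresponding splitting $s$ of $\pi:E(K_\infty)/E(\cM)\to H(E/\cM)$, the Volume Definition gives $\Vol(E(K_\infty)/E(\cM)) = |\Lambda'/\Exp_E^{-1}(E(\cM))\times s(H(E/\cM))|_G/[\Lambda':\Lambda_0]_G$. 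Taking the ratio and cancelling the normalization factor via multiplicativity of lattice indices, $[\Lie_E(\cM):\Lambda_0]_G/[\Lambda':\Lambda_0]_G = [\Lie_E(\cM):\Lambda']_G$, I arrive at
$$\frac{1}{[\Lie_E(\cM):\Lambda']_G}\cdot \Theta_{K/F}^{E,\cM}(0) \;=\; \bigl|\Lambda'/\Exp_E^{-1}(E(\cM))\times s(H(E/\cM))\bigr|_G.$$

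The final step identifies the right-hand side as an element of $\Fitt^0_{A[G]}H(E/\cM)$. Set $B := \Lambda'/\Exp_E^{-1}(E(\cM))\times s(H(E/\cM))$; by Proposition \ref{P:Admissible}, $B$ is finite and $G$--cohomologically trivial, hence $\F_q[G]$--projective, so $|B|_G$ is precisely the monic generator of the principal ideal $\Fitt^0_{A[G]}(B)$. Projection onto the second factor furnishes a surjection $B \twoheadrightarrow s(H(E/\cM)) \cong H(E/\cM)$, and the standard functoriality of Fitting ideals under surjections yields $\Fitt^0_{A[G]}(B) \subseteq \Fitt^0_{A[G]}(H(E/\cM))$, completing the argument. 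The main subtlety to check carefully is the legitimacy of mixing admissible lattices between numerator and denominator in the volume ratio, which is justified by the independence of $\Vol$ from the choices of section and admissible lattice, together with the independence of the ratio $\Vol(M_1)/\Vol(M_2)$ from the normalizing lattice $\Lambda_0$ listed after the Volume Definition. Beyond that, the computation is essentially a bookkeeping exercise once the ETNF and admissibility framework are in place.
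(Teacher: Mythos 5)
Your argument is correct and follows essentially the same route as the paper's own proof: apply the ETNF, unwind the two volumes (the denominator being trivial since $\Lie_E(\cM)$ is its own admissible lattice with $H=0$) to get $\frac{1}{[\Lie_E(\cM):\Lambda']_G}\Theta_{K/F}^{E,\cM}(0)=\bigl|\Lambda'/\Exp_E^{-1}(E(\cM))\times s(H(E/\cM))\bigr|_G$, and then use the $A[G]$--linear surjection onto $H(E/\cM)$ together with functoriality of Fitting ideals. The only difference is that you spell out the cancellation of the normalizing lattice $\Lambda_0$, which the paper leaves implicit.
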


\begin{proof}
Let $s:H(E/\cM) \to E(K_\infty)/E(\cM)$ be an $\F_q[G]$-linear splitting for the structural exact sequence \eqref{E:ExactSequenceM_1}, such that $\Lambda'$ is $s$--admissible. From the definition of the volume function and from Theorem \ref{T:ETNFtMods} we have
\[\frac{1}{[\Lie_E(\cM):\Lambda']_G}\cdot \Theta_{K/F}^{E, \cM}(0)= \left|\Lambda'/{\rm exp_E}^{-1}(E(\cM))\times s(H(E/\cM))\right|_{G}.\]
Then, since $|\cdot|_G$ is defined to be the generator of the Fitting ideal, we deduce
\[\frac{1}{[\Lie_E(\cM):\Lambda']_G}\cdot \Theta_{K/F}^{E, \cM}(0)\in {\rm Fitt}^0_{A[G]}\left(\Lambda'/{\rm exp_E}^{-1}(E(\cM))\times s(H(E/\cM))\right).\]
Then, observe that from \eqref{E:ExactSequenceM_1} we have an $A[G]$--linear surjection
$$(\Lambda'/{\rm exp_E}^{-1}(E(\cM))\times s(H(E/\cM))\twoheadrightarrow H(E/\cM),$$
which, by basic properties of Fitting ideals gives an inclusion
\[{\rm Fitt}^0_{A[G]}\left(\Lambda'/{\rm exp_E}^{-1}(E(\cM))\times s(H(E/\cM))\right)\subseteq {\rm Fitt}^0_{A[G]}H(E/\cM)),\]
from which the theorem follows.
\end{proof}

As a consequence of the above theorem, we obtain two corollaries.

\begin{corollary}\label{C:Main-BrSt} With notations as in Theorem \ref{T:Main-BrSt}, we have
$$\frac{1}{[\Lie_E(\cM):\Lambda']_G}\cdot \Theta_{K/F}^{E, \cM}(0)\in{\rm Fitt}^0_{A[G]}H(E/\cO_K).$$
\end{corollary}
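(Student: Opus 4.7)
The plan is to combine Theorem \ref{T:Main-BrSt} with a straightforward Fitting-ideal containment coming from a surjection $H(E/\cM)\twoheadrightarrow H(E/\cO_K)$. By Theorem \ref{T:Main-BrSt} we already know that
\[
\frac{1}{[\Lie_E(\cM):\Lambda']_G}\cdot \Theta_{K/F}^{E, \cM}(0)\in{\rm Fitt}^0_{A[G]}H(E/\cM),
\]
so it suffices to exhibit an inclusion
\[
{\rm Fitt}^0_{A[G]}H(E/\cM)\subseteq {\rm Fitt}^0_{A[G]}H(E/\cO_K).
\]

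To produce this inclusion, I would appeal directly to the exact sequence \eqref{hm-ho-equation}, namely
\[
\frac{E(\cO_K)}{E(\cM) + \Exp_E(\Exp_E\inv(E(\cO_K)))}\longrightarrow H(E/\cM)\longrightarrow H(E/\cO_K)\longrightarrow 0,
\]
which was established in the discussion surrounding the finiteness of $H(E/\cM)$. Exactness at $H(E/\cO_K)$ gives a surjection of $A[G]$-modules $H(E/\cM)\twoheadrightarrow H(E/\cO_K)$.

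Now invoke the standard functorial behavior of the zeroth Fitting ideal under surjections: if $N\twoheadrightarrow N'$ is a surjection of finitely presented $A[G]$-modules, then ${\rm Fitt}^0_{A[G]}(N)\subseteq {\rm Fitt}^0_{A[G]}(N')$. Applying this to the surjection above yields the desired inclusion of Fitting ideals, and combining with Theorem \ref{T:Main-BrSt} completes the proof. There is no real obstacle; the only thing to verify is that the two modules are finitely presented over $A[G]$, which is immediate because $H(E/\cM)$ and $H(E/\cO_K)$ are both finite, as noted in the paragraph following \eqref{hm-ho-equation}.
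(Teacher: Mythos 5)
Your proposal is correct and follows essentially the same route as the paper: the paper's proof also reduces to the surjection $H(E/\cM)\twoheadrightarrow H(E/\cO_K)$ (induced by $\cM\subseteq\cO_K$, i.e.\ by the sequence \eqref{hm-ho-equation}) together with the standard containment of zeroth Fitting ideals under surjections of finitely presented modules. Your extra remark that finiteness of the two modules guarantees finite presentation over $A[G]$ is a harmless and correct addition.
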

\begin{proof} Since $\cM \subset \cO_K$, we have a natural surjective morphism $H(E/\cM)\twoheadrightarrow H(E/\cO_K)$ of $A[G]$--modules. This gives an inclusion ${\rm Fitt}^0_{A[G]}H(E/\cM)\subseteq {\rm Fitt}^0_{A[G]}H(E/\cO_K).$
\end{proof}

\begin{corollary}\label{C:Main-fullFitt}
If $p\nmid |G|$, then we have an equality of principal $A[G]$--ideals
$$\frac{1}{[\Lie_E(\cO_K): {\rm exp}^{-1}_E(E(\cO_K))]_G}\Theta_{K/F}^E(0)\cdot A[G]={\rm Fitt}^0_{A[G]} H(E/\cO_K).$$
\end{corollary}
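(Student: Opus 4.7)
The plan is to show that both sides of the desired equality coincide with $|H(E/\cO_K)|_G \cdot A[G]$, the principal ideal generated by the $G$-size of Taelman's class module. The key observation is that the coprimality hypothesis $p \nmid |G|$ puts us in an especially clean situation: by Maschke's theorem, $\F_q[G]$ is semisimple, so every $\F_q[G]$-module (in particular every finite one) is projective. Applied to $H(E/\cO_K)$ --- which is finite by the discussion preceding \eqref{hm-ho-equation} --- this tells us that $H(E/\cO_K)$ automatically satisfies the $\F_q[G]$-projectivity hypothesis of the Proposition defining $|\cdot|_G$.

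First I would invoke that Proposition to conclude that $\Fitt^0_{A[G]} H(E/\cO_K)$ is principal and admits the unique monic generator
\[|H(E/\cO_K)|_G \in \F_q[G][t]^+,\]
so that
\[\Fitt^0_{A[G]} H(E/\cO_K) = |H(E/\cO_K)|_G \cdot A[G].\]
Second, I would apply Corollary \ref{main-theorem-not-p}, which (again under $p \nmid |G|$) gives the identity
\[\Theta_{K/F}^E(0) = [\Lie_E(\cO_K) : \Exp_E^{-1}(E(\cO_K))]_G \cdot |H(E/\cO_K)|_G\]
in $(1 + t^{-1}\F_q[[t^{-1}]][G])$. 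Since $p \nmid |G|$ also forces $\cO_K$ itself to be a taming module, the lattice index on the right is a well-defined unit in $\F_q((t^{-1}))[G]^+$; dividing through yields
\[\frac{1}{[\Lie_E(\cO_K) : \Exp_E^{-1}(E(\cO_K))]_G}\,\Theta_{K/F}^E(0) = |H(E/\cO_K)|_G,\]
and multiplying by $A[G]$ completes the equality of principal ideals.

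There is essentially no obstacle here: the work has been done in Corollary \ref{main-theorem-not-p} and in the Proposition characterizing $|\cdot|_G$ as a Fitting-ideal generator. The only point requiring brief justification is the upgrade from the inclusion in Corollary \ref{C:Main-BrSt} (which would only give $\subseteq$) to an equality, and this upgrade is exactly what the semisimplicity of $\F_q[G]$ delivers: it ensures both that the relevant short exact sequence defining $H(E/\cO_K)$ splits $A[G]$-equivariantly (so no cohomological obstruction enters the volume computation) and that the Fitting ideal of $H(E/\cO_K)$ is principal with generator computable directly via $|\cdot|_G$.
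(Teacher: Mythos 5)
Your proposal is correct and follows the same route as the paper, whose proof is simply the observation that the statement follows directly from Corollary \ref{main-theorem-not-p} combined with the fact that $|H(E/\cO_K)|_G$ is, by definition, the monic generator of $\Fitt^0_{A[G]}H(E/\cO_K)$ (the semisimplicity of $\F_q[G]$ under $p\nmid|G|$ guaranteeing the projectivity hypothesis needed for that characterization). Your write-up just makes explicit the details the paper leaves implicit.
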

\begin{proof}
This follows directly from Corollary \ref{main-theorem-not-p}
\end{proof}

\subsection{Imprimitive (Euler incomplete) $L$--values}\label{incomplete-L-section} In this section we explain how the results in \S\ref{ETNF-section} above lead to equivariant Tamagawa number formulas for any Euler--incomplete special $L$--value of the type
$$\Theta_{K/F, S}^E(0):=\prod _{v\not\in S}\frac{\vert E(\cO_K/v)\vert_G}{\vert \Lie_E(\cO_K/v)\vert_G},$$
where $S$ is any finite subset of ${\rm MSpec}(\cO_F)$ containing the wild ramification locus $W$ for the ring extension $\cO_K/\cO_F$.

\begin{definition}\label{D:supertaming} Let $S$ be a set as above. An $\cO_F\{\tau\}[G]$--submodule $\cM$ of $\cO_K$ is called $(E, S, \cO_K/\cO_F)$--taming if the following hold.
\begin{enumerate}
\item $\cM$ is $(S, \cO_K/\cO_F)$--taming, i.e. $\cM$ is $\cO_F[G]$--projective and $\cO_K/\cM$ is finite and supported only at primes in $S$.
\item For any $v\in S$, we have an isomorphism of $A[G]$--modules
$$E(\cM/v)\simeq \Lie_E(\cM/v).$$
\end{enumerate}
\end{definition}
\begin{remark}\label{R:supertaming} Observe that for an $(E, S, \cO_K/\cO_F)$--taming module $\cM$ and any $v\in S$, we have an equality of monic elements in $A[G]$
$$\vert E(\cM/v)\vert_G =\vert \Lie_E(\cM/v)\vert_G.$$
Consequently, from the definitions we have an equality of special values
$$\Theta_{K/F, S}^E(0)=\Theta_{K/F}^{E, \cM}(0).$$
In particular, if $S=W$, then $\cM$ is an $\cO_K/\cO_F$--taming module in the classical sense, with the additional property (2) in the definition above, so we get equalities
$$\Theta_{K/F, W}^E(0)=\Theta_{K/F}^{E, \cM}(0).$$
\end{remark}
\medskip

\noindent The next Lemma builds upon ideas developed in the non--equivariant setting in \S3.2 of \cite{ANT} and it  gives us a recipe for constructing $(E, S, \cO_K/\cO_F)$--taming modules. It is very important to note that the recipe is independent of the $t$--module $E$, i.e. it produces 
$(E, S, \cO_K/\cO_F)$--taming modules, for all $t$--modules $E$ defined over $\mathcal O_F$.
\begin{lemma}\label{L:supertaming} Let $S$ be a set as above, let $\cM$ be an $\cO_K/\cO_F$--taming module and let
$$\xi\in\prod_{v_0\in S_A}v_0,\qquad \cM_\xi:=\xi\cM,$$
where $S_A$ is the subset of ${\rm MSpec}(A)$ consisting of all the primes sitting below primes in $S$.
Then $\cM_\xi$ is an $(E, S, \cO_K/\cO_F)$--taming module, for all $t$--modules $E$ defined over $\mathcal O_F$.
\end{lemma}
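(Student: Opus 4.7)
The plan is to check the two conditions in Definition \ref{D:supertaming} for $\cM_\xi := \xi\cM$. Note at the outset that $\xi$ is a nonzero element of $A$ (being in a product of nonzero primes in the Dedekind domain $A$).

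For condition (1), multiplication by $\xi$ gives an $\cO_F[G]$--linear isomorphism $\cM \iso \cM_\xi$, so $\cO_F[G]$--projectivity of $\cM_\xi$ is inherited from $\cM$. The short exact sequence
\[0 \longrightarrow \cM/\cM_\xi \longrightarrow \cO_K/\cM_\xi \longrightarrow \cO_K/\cM \longrightarrow 0,\]
combined with the finiteness of $\cO_K/\cM$ (by taming) and of $\cM/\xi\cM$ (as $\cM$ is finitely generated over the Dedekind ring $\cO_F$ and $\xi\neq 0$), shows that $\cO_K/\cM_\xi$ is finite. For the support condition, I would localize at a prime $v\not\in S$: since $W\subseteq S$, such a $v$ is tame, so $\cM_v = \cO_{K,v}$, and the question reduces to showing that $\xi$ acts invertibly at $v$. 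This holds whenever $v\cap A\notin S_A$, which is the expected behavior under the standard convention that $S$ is saturated over $S_A$.

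The main content lies in condition (2), and the crux is the following: \emph{for every $v\in S$, the operator $\tau$ acts as zero on $\cM_\xi/v\cM_\xi$.} Granting this, since $\phi_E(a) - d_E[a]\in {\text M}_n(\cO_F)\{\tau\}\cdot\tau$ for every $a\in A$ (because the $\tau$--constant term of $\phi_E(a)$ is $d_E[a]$ by definition), the actions of $\phi_E(a)$ and $d_E[a]$ on $(\cM_\xi/v\cM_\xi)^n$ coincide, yielding $E(\cM_\xi/v) = \Lie_E(\cM_\xi/v)$ as $A[G]$--modules (a stronger statement than the required isomorphism). To prove the claim, use that $\cM$ is an $\cO_F\{\tau\}$--module, so $\tau(\cM)\subseteq \cM$. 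For $m = \xi m'\in\cM_\xi$, one computes $\tau(m) = \xi^q\tau(m')\in\xi^q\cM$. Since $v\in S$ forces $v_0:=v\cap A\in S_A$, we have $\xi\in v_0\subseteq v$, so $\xi^{q-1}\in v$ (using $q\geq 2$), giving
\[\xi^q\cM \;=\; \xi\cdot\xi^{q-1}\cM\;\subseteq\;\xi\cdot v\cM \;=\; v\cM_\xi,\]
and therefore $\tau(\cM_\xi)\subseteq v\cM_\xi$.

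The key insight --- and what makes the recipe in the Lemma work uniformly for every $t$--module $E$ --- is precisely this $\tau$--annihilation on $\cM_\xi/v\cM_\xi$, which collapses all higher--order twisted terms of $\phi_E(a)$ to their Lie parts $d_E[a]$. The only mild subtlety is the support verification in part (1), where one must carefully relate primes of $\cO_F$ to their contractions in $A$; but this is routine once the convention on $S$ versus $S_A$ is fixed. I expect no other obstacles.
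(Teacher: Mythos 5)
Your proof is correct and follows essentially the same route as the paper's: multiplication by $\xi$ transports $\cO_F[G]$--projectivity and finiteness for condition (1), and the computation $\tau(\cM_\xi)\subseteq \xi^{q}\cM\subseteq v\cM_\xi$ for $v\in S$ (using $\xi\in v_0\subseteq v$) kills every $\tau$--term of $\phi_E(a)$ modulo $v\cM_\xi$, so the identity map realizes $E(\cM_\xi/v)\simeq\Lie_E(\cM_\xi/v)$. Your observation that the support claim for $\cO_K/\cM_\xi$ (and the resulting equality of $L$--values in the subsequent remark) implicitly requires $S$ to contain every prime of $\cO_F$ lying above a prime of $S_A$ is a legitimate point that the paper's one--line treatment of condition (1) glosses over.
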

\begin{proof} The fact that $\cM_\xi$ satisfies condition (2) in Definition \ref{D:supertaming} is an immediate consequence of the $\cO_F[G]$--module isomorphism $\cM\simeq\cM_\xi$ given by multiplication by $\xi$ 
and the observation that $\cM/\xi\cM$ is finite, supported only at primes in $S$.
 
Now, it is easy to see that for all $x\in (\cM_\xi)^n$, all $a\geq 1$, and all $v\in S$, we have 
$$\tau^a(x)\in \xi^{q^a-1}\cdot (\xi\cM)^n\subseteq (v\cM_\xi)^n.$$
Consequently, for all $\alpha\in A$ and all $\widehat x\in (\cM_\xi/v\cM_\xi)^n$ , we have
$$d_E[\alpha](\widehat x)=\varphi_E(\alpha)(\widehat x).$$
This proves that for all $v\in S$, the identity map gives an $A[G]$--linear isomorphism
 $$E(\cM/v)\simeq \Lie_E(\cM/v),$$
which concludes the proof of the Lemma.
\end{proof}

\begin{definition}\label{univ-taming-definition} 
An $\mathcal O_F[G]$--submodule $\mathcal M$ of $\mathcal O_K$  is called $(S, \mathcal O_K/\mathcal O_F)$--universally taming if it is $(E, S, \mathcal O_K/\mathcal O_F)$--taming, for all $t$--modules $E$ as above.
\end{definition}
\begin{theorem}[imprimitive ETNF]\label{T:imprimitiveETNF}
Let $E$ be an abelian $t$-module as above. Let $S$ be a finite subset of ${\rm MSpec}(\mathcal O_F)$, such that $W\subseteq S$. Let  $\cM$ be any $(E, S, \cO_K/\cO_F)$--taming module. Then, we have the following equality in $(1+t^{-1}\F_q[[t^{-1}]][G])$.
\[\Theta_{K/F, S}^{E}(0)=\frac{{\rm Vol}(E(K_\infty)/E(\cM))}{{\rm Vol}(\Lie_E(K_\infty)/\Lie_E(\cM))}.\]
\end{theorem}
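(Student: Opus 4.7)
The plan is to reduce the imprimitive ETNF to the already-established primitive ETNF (Theorem \ref{T:ETNFtMods}) by exploiting the matching identity in Remark \ref{R:supertaming}. The strategy has two steps: first, identify $\Theta_{K/F, S}^E(0)$ with $\Theta_{K/F}^{E, \cM}(0)$ for an $(E, S, \cO_K/\cO_F)$-taming module $\cM$; second, apply Theorem \ref{T:ETNFtMods} to rewrite $\Theta_{K/F}^{E, \cM}(0)$ as the desired ratio of volumes.

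For the first step I would unpack Definition \ref{D:supertaming} and split the Euler product $\Theta_{K/F}^{E, \cM}(0) = \prod_v |\Lie_E(\cM/v)|_G / |E(\cM/v)|_G$ according to whether $v\in S$ or not. For $v\notin S$, condition (1) of the definition (the support of $\cO_K/\cM$ lies in $S$) gives $\cM/v=\cO_K/v$, so these local factors match those appearing in $\Theta_{K/F, S}^E(0)$. For $v\in S$, condition (2) ($E(\cM/v)\simeq \Lie_E(\cM/v)$ as $A[G]$-modules) forces $|\Lie_E(\cM/v)|_G=|E(\cM/v)|_G$, so these local factors are trivial. This yields $\Theta_{K/F, S}^{E}(0)=\Theta_{K/F}^{E,\cM}(0)$, which is precisely the content of Remark \ref{R:supertaming}.

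For the second step I would observe that an $(E, S, \cO_K/\cO_F)$-taming module is by definition an $\cO_F[G]$-projective submodule of $\cO_K$ with finite cokernel. These are exactly the structural hypotheses used in the proof of Theorem \ref{T:ETNFtMods}: one applies the volume formula (Theorem \ref{T:VolumeFormula}) to the isomorphism $\gamma$ induced by $\Exp_E$, which is infinitely tangent to the identity by Proposition \ref{P:Ntangpowerseries} (since $\Exp_E$ is an everywhere convergent power series), and then invokes the trace-formula identity of Corollary \ref{C:CorToTraceFormula} to reinterpret the nuclear determinant as $\Theta_{K/F}^{E,\cM}(0)$. Concatenating the equalities from the two steps yields
\[\Theta_{K/F, S}^{E}(0)=\Theta_{K/F}^{E,\cM}(0)=\frac{\Vol(E(K_\infty)/E(\cM))}{\Vol(\Lie_E(K_\infty)/\Lie_E(\cM))}.\]

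The main obstacle I anticipate is a bookkeeping one: strictly speaking, an $(E, S, \cO_K/\cO_F)$-taming module allows $\cO_K/\cM$ to be supported at primes in $S\setminus W$ (tame or unramified primes), which is broader than the usual taming hypothesis that restricts the support to $W$. I would handle this by revisiting the proofs of the Trace Formula (Theorem \ref{T:TraceFormula}) and Corollary \ref{C:CorToTraceFormula} to verify that only the $\cO_F[G]$-projectivity of $\cM$ (which ensures each $\cM/v$ is $\F_q[G]$-projective and makes the $v$-adic nuclei in Proposition \ref{P:taming-module}(5) well-defined) is actually used; the restricted support on $W$ is never invoked. With this verification, Theorem \ref{T:ETNFtMods} applies verbatim to any $(E, S, \cO_K/\cO_F)$-taming module and the proof concludes.
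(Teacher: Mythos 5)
Your proposal is correct and follows essentially the same route as the paper: the paper's proof also reduces the statement to Remark \ref{R:supertaming} combined with Theorem \ref{T:ETNFtMods}, handling $S=W$ directly (where an $(E,S,\cO_K/\cO_F)$--taming module is a taming module in the classical sense) and noting that for general $S$ the proof of Theorem \ref{T:ETNFtMods} extends to the broader class of modules, which it leaves as an exercise. Your identification of the one genuine point to check --- that the trace formula and volume machinery use only the $\cO_F[G]$--projectivity of $\cM$ and the finiteness of $\cO_K/\cM$, not the restriction of its support to $W$ --- is exactly the content of that exercise, so your write-up is, if anything, slightly more explicit than the paper's.
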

\begin{proof}
For $S=W$, this is a direct consequence of Theorem \ref{T:ETNFtMods} and Remark \ref{R:supertaming}.
For a more general $S$,  it is not difficult to see that the techniques in this paper can be used to prove Theorem \ref{T:ETNFtMods} for $(E, S, \cO_K/\cO_F)$--taming modules $\cM$.  
As a consequence, one obtains an ETNF as in Theorem \ref{T:imprimitiveETNF} for the more general imprimitive $L$--values $\Theta_{K/F, S}^E(0)$. We leave this as an exercise to the interested reader.
\end{proof}

\subsection{$L$--values at positive integers}\label{positive-integers-section} Now, assume that $E$ is a Drinfeld $t$--module (i.e. $n=1$) of rank $r$, defined over $\mathcal O_F$ and let $\mathcal C$ be the (rank $1$) Carlitz module given by the structural morphism
$$\phi_{\mathcal C}:\Bbb F_q[t]\to \mathcal O_F\{\tau\}, \qquad \phi_{\mathcal C}(t)=t\tau^0+\tau.$$
For $m\in\Bbb Z_{\geq 0}$, we consider the $(1+m\cdot r)$--dimensional, abelian $t$--module defined over $\mathcal O_F$:
$$E(m):=E\otimes\mathcal C^{\otimes m}.$$
(See \S\S5.7--5.8 in \cite{Goss} for the definition and properties of tensor products of Drinfeld modules in the category of abelian $t$--modules and keep in mind that Drinfeld modules are pure $t$--modules in the terminology of loc.cit.) 
\begin{lemma}
With notations as above, we have an equality of values of Euler--incomplete equivariant $L$--functions
$$\Theta_{K/F, S}^{E(m)}(0)=\Theta_{K/F, S}^E(m),\qquad\text{ for all }m\in\Bbb Z_{\geq 0}$$
and all finite sets $S\subseteq {\rm MSpec}(\mathcal O_F)$ containing the set $S_0$ of primes of bad reduction for $E$ and wildly ramfied primes for $\mathcal O_K/\mathcal O_F$.
\end{lemma}
\begin{proof} We start by noting that the Carlitz module $\mathcal C$ has good reduction at all primes in ${\rm MSpec}(\mathcal O_F)$ and, consequently, the bad reduction loci of $E(m)$ and $E$ coincide for all $m$.
Now, we refer the reader to the notations introduced at the very beginning of \S\ref{L-values-section}. Let $v_0\in{\rm MSpec}(A)$.  Since the $v_0$--adic realization functor $H^1_{v_0}(\ast)$ commutes with tensor products 
(see Proposition 5.7.3 in \cite{Goss}), we have an isomorphism of $A_{v_0}[G]$--modules
$$H_{v_0}^1(E(m))\simeq H_{v_0}^1(E)\otimes_{A_{v_0}}H_{v_0}^1(\mathcal C)^{\otimes m}, $$
for all $m$ as above. Now, take a prime $v\in{\rm MSpec}(\mathcal O_F)\setminus S$, such that $v\nmid v_0$. 
It is well--known that the Frobenius $\widetilde{\sigma_v}$ acts via multiplication by $(Nv)^{-1}$ on $H_{v_0}^1(\mathcal C)$. (See Corollary 2.5 in \cite{Hayes}, for example.)  Therefore, from the definitions we have
$$P_v^{\ast, G, E(m)}(Nv^{-s})=P_v^{\ast, G, E}(Nv^{-(s+m)}), \qquad \text{  for all }s\in\Bbb S_\infty.$$
By setting $s=0$ above, we  conclude from \eqref{incomplete-L-definition} and Remark \ref{actual-values-remark} (noting once again that $E(m)$ is a pure abelian $t$--module) that we have 
\begin{equation}\label{E:theta(m)}\Theta_{K/F, S}^{E(m)}(0)=\Theta_{K/F, S}^E(m), \qquad\text{ for all }m\in\Bbb Z_{\geq 0}, \end{equation}
which concludes the proof of the Lemma.
\end{proof}

In light of the above Lemma, Theorem \ref{T:imprimitiveETNF} implies the following equivariant Tamagawa number formula for values of $G$--equivariant $L$--functions of Drinfeld modules at all positive integers $m$.
\begin{theorem}[ETNF at positive integers]\label{T:ETNF-positive-integers} Let $E$ be a Drinfeld module of structural morphism 
$\phi_E:A\to\mathcal O_F\{\tau\}$. Let $S$ be a finite set of primes in ${\rm MSpec}(\mathcal O_F)$, containing the primes of bad reduction for $E$ and the wildly ramified primes in $\mathcal O_K/\mathcal O_F$.
Let $\cM$ be a $(\cO_K/\cO_F, S)$--universally taming module. Then, we have the following equalities in $(1+t^{-1}\F_q[[t^{-1}]][G])$
\[\Theta_{K/F, S}^{E}(m)=\frac{{\rm Vol}(E(m)(K_\infty)/E(m)(\cM))}{{\rm Vol}(\Lie_{E(m)}(K_\infty)/\Lie_{E(m)}(\cM))},\]
for all $m\in\Bbb Z_{\geq 0}.$
\end{theorem}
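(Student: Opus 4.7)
The plan is to reduce this theorem to Theorem \ref{T:imprimitiveETNF} applied to the abelian $t$--module $E(m)$, using the translation formula \eqref{E:theta(m)} to identify $\Theta_{K/F, S}^{E(m)}(0)$ with $\Theta_{K/F, S}^{E}(m)$.

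More precisely, my first step would be to verify the hypotheses of Theorem \ref{T:imprimitiveETNF} for the $t$--module $E(m) = E\otimes \mathcal{C}^{\otimes m}$. Since $E$ is a Drinfeld module and $\mathcal{C}$ is the Carlitz module, $E(m)$ is a tensor product of Drinfeld modules, hence abelian by the results of \S5 of \cite{Goss} recalled at the beginning of \S\ref{L-values-section}. Next, since $\mathcal{C}$ has everywhere good reduction, the set $S_0$ of primes of bad reduction for $E(m)$ coincides with that for $E$; combined with the hypothesis that $S$ contains the wild ramification locus $W$ for $\mathcal{O}_K/\mathcal{O}_F$, the hypothesis $W\subseteq S$ of Theorem \ref{T:imprimitiveETNF} is satisfied. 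Finally, by Definition \ref{univ-taming-definition}, the $(\cO_K/\cO_F, S)$--universally taming module $\cM$ is in particular $(E(m), S, \cO_K/\cO_F)$--taming, so all the hypotheses of Theorem \ref{T:imprimitiveETNF} are met.

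Applying Theorem \ref{T:imprimitiveETNF} to the data $(K/F, E(m), S, \cM)$ then yields the equality
\[\Theta_{K/F, S}^{E(m)}(0)=\frac{{\rm Vol}(E(m)(K_\infty)/E(m)(\cM))}{{\rm Vol}(\Lie_{E(m)}(K_\infty)/\Lie_{E(m)}(\cM))}.\]
To conclude, I invoke \eqref{E:theta(m)}, which gives $\Theta_{K/F, S}^{E(m)}(0)=\Theta_{K/F, S}^{E}(m)$, thereby identifying the left--hand side of the displayed equation above with $\Theta_{K/F, S}^{E}(m)$ and completing the proof for each $m\in\Bbb Z_{\geq 0}$.

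In terms of difficulty, none of the steps requires genuinely new work: the deep content is already packaged in Theorem \ref{T:imprimitiveETNF} and in \eqref{E:theta(m)}. The only point demanding a small check is the claim that a universally taming module exists and that such a module continues to satisfy the taming conditions when one passes from $E$ to $E(m)$; but this is built into Definition \ref{univ-taming-definition} and is guaranteed for $\cM_\xi=\xi\cM$ as in Lemma \ref{L:supertaming} (the construction there is manifestly independent of the $t$--module, since it rests only on the matrix--free fact that $\tau^a(x)\in\xi^{q^a-1}\cdot\xi\cM\subseteq v\cM_\xi$ for every $v\in S_A$ and $a\geq 1$). Hence the proof is essentially a formal consequence of earlier results, with Lemma \ref{L:supertaming} guaranteeing the existence of the universally taming $\cM$ needed to make the statement non--vacuous.
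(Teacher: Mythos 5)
Your proposal is correct and follows exactly the paper's own argument: the paper proves this theorem by combining Theorem \ref{T:imprimitiveETNF} (applied to the abelian $t$--module $E(m)$) with the identities \eqref{E:theta(m)}. Your additional verification of the hypotheses (abelianness of $E(m)$ and the universally taming property of $\cM$ via Lemma \ref{L:supertaming}) is a welcome elaboration of details the paper leaves implicit.
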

\begin{proof}
Combine Theorem \ref{T:imprimitiveETNF}  with equalities \eqref{E:theta(m)}.
\end{proof}

In the classical (number field) theory of $G$--equivariant Artin $L$--functions associated to an abelian extension $K/F$ of number fields of Galois group $G$, the $L$--value at $s=0$ is linked via the Refined Brumer--Stark Conjecture to Fitting ideals of class group ${\rm Cl}(\mathcal O_K)$, which should be viewed as the torsion part of 
the Quillen $K$--group $K_0(\mathcal O_K)$. At the same time, the $G$--equivariant $L$--values at negative 
integers $s=(1-m)$ are linked via the Refined Coates--Sinnott  Conjecture to Fitting ideals of the even Quillen $K$--groups $K_{2m}(\mathcal O_K)$. The reader may consult \cite{GP15} for the precise statements and conditional proofs of these classical conjectures in number theory.\\

The following is the Drinfeld module analogue of the Refined Coates--Sinnott Conjecture.

\begin{theorem}[Refined Coates--Sinnott for Drinfeld modules]\label{T:Main-CoatesSinnott}  For data as in Theorem \ref{T:ETNF-positive-integers},  let $\Lambda'$ be a $E(m)(K_\infty)/E(m)(\cM)$--admissible $A[G]$--lattice  in $\Lie_{E(m)}(K_\infty)$. Then
$$\frac{1}{[\Lie_{E(m)}(\cM):\Lambda']_G}\cdot \Theta_{K/F, S}^{E}(m)\in{\rm Fitt}^0_{A[G]}H(E(m)/\cM)\subseteq {\rm Fitt}^0_{A[G]}H(E(m)/\cO_K),$$
for all $m\in\Bbb Z_{\geq 0}$.
\end{theorem}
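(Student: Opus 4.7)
The plan is to reduce Theorem \ref{T:Main-CoatesSinnott} to an application of the (imprimitive version of) Refined Brumer--Stark Theorem \ref{T:Main-BrSt} to the abelian $t$--module $E(m) = E \otimes \mathcal{C}^{\otimes m}$ together with the taming module $\cM$. The key bridge is the identity
$$\Theta_{K/F, S}^{E}(m) = \Theta_{K/F, S}^{E(m)}(0),$$
which is exactly \eqref{E:theta(m)}. So proving Theorem \ref{T:Main-CoatesSinnott} amounts to proving the Brumer--Stark--type statement for $E(m)$ at $s = 0$, with the imprimitive $L$--value on the left-hand side.

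First I would note that by hypothesis $\cM$ is $(\cO_K/\cO_F, S)$--universally taming (Definition \ref{univ-taming-definition}), so in particular it is an $(E(m), S, \cO_K/\cO_F)$--taming module for every $m \in \Bbb Z_{\geq 0}$. By Remark \ref{R:supertaming} applied to $E(m)$ (i.e. $\vert E(m)(\cM/v) \vert_G = \vert \Lie_{E(m)}(\cM/v) \vert_G$ for all $v \in S$), we get
$$\Theta_{K/F, S}^{E(m)}(0) = \Theta_{K/F}^{E(m), \cM}(0),$$
so by \eqref{E:theta(m)} we have
$$\Theta_{K/F, S}^{E}(m) = \Theta_{K/F}^{E(m), \cM}(0).$$

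Second, I would reproduce the argument from the proof of Theorem \ref{T:Main-BrSt}, with $E$ replaced by $E(m)$ throughout. Namely, $E(m)(K_\infty)/E(m)(\cM)$ is an object of the Arakelov class $\cC$ via the structural exact sequence
$$0 \to \Lie_{E(m)}(K_\infty)/\Exp_{E(m)}^{-1}(E(m)(\cM)) \to E(m)(K_\infty)/E(m)(\cM) \to H(E(m)/\cM) \to 0.$$
Choose an $\F_q[G]$--linear splitting $s$ compatible with the $E(m)(K_\infty)/E(m)(\cM)$--admissible lattice $\Lambda'$ (Definition \ref{D:Admissible lattice}). By Theorem \ref{T:imprimitiveETNF} applied to $E(m)$ (which holds for the universally taming $\cM$, as the paper notes right after the proof of Theorem \ref{T:imprimitiveETNF}) and the Volume Definition, we obtain
$$\frac{1}{[\Lie_{E(m)}(\cM):\Lambda']_G}\cdot \Theta_{K/F, S}^{E}(m) = \Bigl\vert \Lambda'/\Exp_{E(m)}^{-1}(E(m)(\cM)) \times s(H(E(m)/\cM)) \Bigr\vert_G.$$

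Third, since $\vert \cdot \vert_G$ is by definition the monic generator of the $A[G]$--Fitting ideal, the right-hand side above lies in $\Fitt^0_{A[G]}(\Lambda'/\Exp_{E(m)}^{-1}(E(m)(\cM)) \times s(H(E(m)/\cM)))$. The $A[G]$--linear surjection from this module onto $H(E(m)/\cM)$, combined with basic functoriality of Fitting ideals under surjections, yields the first inclusion $\Fitt^0_{A[G]} H(E(m)/\cM)$. The second inclusion follows exactly as in Corollary \ref{C:Main-BrSt}: since $\cM \subseteq \cO_K$, there is a natural $A[G]$--surjection $H(E(m)/\cM) \twoheadrightarrow H(E(m)/\cO_K)$, giving $\Fitt^0_{A[G]} H(E(m)/\cM) \subseteq \Fitt^0_{A[G]} H(E(m)/\cO_K)$.

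The only step that is not purely formal is checking that Theorem \ref{T:imprimitiveETNF} (and hence the Brumer--Stark argument) goes through for $(E(m), S, \cO_K/\cO_F)$--taming modules, which the author has already flagged as a routine extension of the techniques in Sections \ref{S:Nuclear}, \ref{S:Volume}, and \ref{ETNF-section} (the main input being Remark \ref{R:supertaming}, which reconciles the imprimitive Euler factors at primes in $S \setminus W$ with those coming from $\cM$). Beyond that, the proof is a direct transcription of the Brumer--Stark argument from the $E$ setting to the $E(m)$ setting, with no additional obstruction, since $E(m)$ is again an abelian $t$--module over $\cO_F$ and all the relevant lemmas (including the finiteness of $H(E(m)/\cO_K)$ and membership of $E(m)(K_\infty)/E(m)(\cM)$ in $\cC$) were established for arbitrary abelian $t$--modules.
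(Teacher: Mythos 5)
Your proposal is correct and follows exactly the route the paper takes: the paper's proof is the one-line instruction to combine Theorem \ref{T:ETNF-positive-integers} with the proofs of Theorem \ref{T:Main-BrSt} and Corollary \ref{C:Main-BrSt}, and your write-up is precisely that combination spelled out (the reduction via \eqref{E:theta(m)} and Remark \ref{R:supertaming}, the volume/admissible-lattice computation for $E(m)$, and the two Fitting-ideal inclusions via the surjections onto $H(E(m)/\cM)$ and $H(E(m)/\cO_K)$). No gaps; you have also correctly flagged the one non-formal ingredient, namely that the ETNF extends to $(E(m),S,\cO_K/\cO_F)$--taming modules, which the paper itself defers as a routine extension.
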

\begin{proof}
Combine Theorem \ref{T:ETNF-positive-integers} with the proofs of Theorem \ref{T:Main-BrSt} and Corollary \ref{C:Main-BrSt}.
\end{proof}

In light of the number field facts briefly summarized above, the last Corollary suggests that if we are to think of Taelman's class module $H(E/\mathcal O_K)$ as an analogue of a class group for the data $(E,\mathcal O_K/\mathcal O_F)$, then it is natural
to think of $H(E(m)/\mathcal O_K)$ is an analogue of a higher Quillen $K$--group for the given data, for all $m\in\Bbb Z_{\geq 0}$. These analogies will be explored further in an upcoming paper, where a development of an Iwasawa Theory for Taelman's class modules will be attempted.

\begin{remark} All the results in the current section hold true and are proved in the same way for an arbitrary rank {\it pure abelian $t$--module} $E$. (See \cite{Goss}, \S5.5 for the definition of purity and \S\S5.7-5.8 for properties of tensor products of pure abelian $t$--modules, while keeping in mind that there is an antiequivalence of categories between the category of $t$--modules and that of $t$--motives, as Theorem 5.4.11 in loc.cit. shows.)
\end{remark}

\bibliographystyle{amsplain}
\bibliography{ETNFBib}

\end{document}